\theoremstyle{plain}
\newtheorem{theorem}{Theorem}[section]
\newtheorem{corollary}[theorem]{Corollary}
\newtheorem{lemma}[theorem]{Lemma}
\newtheorem{proposition}[theorem]{Proposition}
\theoremstyle{definition}
\newtheorem{definition}[theorem]{Definition}
\newtheorem*{claim*}{Claim}
\newtheorem*{theorem*}{Theorem}
\renewcommand{\epsilon}{\varepsilon}
\newcommand{\vertiii}[1]{{\left\vert\kern-0.25ex\left\vert\kern-0.25ex\left\vert #1
		\right\vert\kern-0.25ex\right\vert\kern-0.25ex\right\vert}}
\newcommand{\Om}{\ensuremath{\Omega}}
\newcommand{\om}{\ensuremath{\omega}}
\DeclareSymbolFont{bbold}{U}{bbold}{m}{n}
\DeclareSymbolFontAlphabet{\mathbbold}{bbold}
\DeclareMathAlphabet{\mathpzc}{OT1}{pzc}{m}{it}
\DeclareMathOperator{\HD}{HD}
\DeclareMathOperator{\tDHD}{t_{D}HD}
\DeclareMathOperator{\ord}{\textbf{Ord}}
\DeclareMathOperator{\card}{\textbf{Card}}
\DeclareMathOperator{\scard}{card}
\DeclareMathOperator{\Ind}{Ind}
\DeclareMathOperator{\trind}{trind}
\DeclareMathOperator{\trInd}{trInd}
\newcommand{\cB}{\ensuremath{\mathcal{B}}}
\newcommand{\HH}{\ensuremath{\mathbb H}}
\providecommand{\phantomsection}{}
\newcommand{\mylabel}[2]{\raisebox{.7\normalbaselineskip}{\phantomsection}(#1)%
	\def\@currentlabel{#1}\textlabel{#2}}
\newcommand\xlabel[2][]{\phantomsection\def\@currentlabelname{#1}\label{#2}}
\NewDocumentCommand{\mathlist}{ O{,} m m }
 {
  \egreg_mathlist:nnn { #1 } { #2 } { #3 }
 }
\newcommand{\dInd}{\emph{Ind}}
\newcommand{\mInd}{\text{Ind}}
\newcommand{\mIm}{\text{Im}}
\newcommand{\IM}{\emph{Im}}
\newcommand{\dIm}{\text{\IM}}
\newcommand{\mh}{\text{-}}
\newcommand{\mHD}{\text{HD}}
\newcommand{\mcard}{\text{card}}
\numberwithin{equation}{section}
\title[]{The $D$-Variant of Transfinite Hausdorff Dimension}
\date{\today}
\author{Bryce Decker}
\address{}
\email{\href{BryceJDecker@gmail.com}{BryceJDecker@gmail.com} }
\author{Nathan Dalaklis}
\address{Department of Mathematics, University of North Texas, Denton, TX 76203-5118, USA}
\email{\href{NathanDalaklis@my.unt.edu}{NathanDalaklis@my.unt.edu} }
\begin{document}

	\maketitle
	
	\begin{abstract}
        We assign every metric space $X$ the value $\tDHD(X)$, an ordinal number or one of the symbols $-1$ or $\Om$, and we call it the $D$-variant of transfinite Hausdorff dimension of $X$. This ordinal assignment is primarily constructed by way of the $D$-dimension, a transfinite dimension function consistent with the large inductive dimension on finite dimensional metric spaces while also addressing shortcomings of the large transfinite inductive dimension. Similar to Hausdorff dimension, $\tDHD(\cdot)$ is monotone with respect to subspaces, and is a bi-Lipschitz invariant. It is also non-increasing with respect to Lipschitz maps and satisfies a coarse intermediate dimension property. We also show that this new transfinite Hausdorff dimension function addresses the primary goal of transfinite Hausdorff dimension functions; to classify metric spaces with infinite Hausdorff dimension. In particular, we show that if $\tDHD\geq \om_0$, then $\HD(X) = \infty$. $\tDHD(X)<\om_1$ for any separable metric space, and that one can find a metrizable space with $\tDHD(X)$ bounded between a given ordinal and it's successive cardinal with topological dimension $0$.  
	\end{abstract}

\section{Introduction}

The classical Hausdorff dimension introduced by Hausdorff in \cite{hausdorffDimensionUndAusseres1919} is invariant with respect to bi-Lipschitz mappings, monotone and used to detect the structure of the geometry of metrizable spaces that are zero-dimensional with respect to Ind, the large inductive dimension (or equivalently the Lebesgue covering dimension by the Katetov-Morita Theorem, which follows from \cite{kiitiDimensionProductTopological1977} a proof of which can be found in \cite[Theorem 4.1.3]{engelkingDimensionTheory1978} ). Similarly, the transfinite Hausdorff dimension, introduced by Urba\'nski in \cite[Definiition 2.2]{urbanskiTransfiniteHausdorffDimension2009} addresses an analogous structural detection problem where the  Hausdorff dimension of a metrizable space is $+\infty$. The transfinite Hausdorff dimension assigns an element of $\ord$ or one of the symbols $\Om$ or $-1$ to such a metrizable space and is also bi-Lipschitz invariant and monotone. However, the formulation of transfinite Hausdorff dimension is based in the small transfinite inductive dimension, trind.

The goal of this work is to formulate a new bi-Lipschitz invariant transfinite dimension function that is based in $D$-dimension (or equivalently the transfinite kernel dimension, trker, this equivalence was announced by Kozlovskii in \cite{kozlovskiiDveTeoremyMetricheskih1972}). Originally defined by Henderson \cite[Definition $(D(X))$.]{hendersonDimensionNewTransfinite1968}, $D$-dimension is a transfinite dimension function that agrees with the large transfinite inductive dimension, $\trInd$, for finite dimensional metrizable spaces and satisfies several product and sum theorems for which trInd fails. This $D$-variant of transfinite Hausdorff Dimension satisfies many of the properties of Urba\'nski's transfinite Hausdorff dimension function as:

\begin{theorem}\label{BDTHM1}
If $X$ is a metric space and its Hausdorff dimension is finite, then 
\[
D(X) \leq \tDHD(X) \leq  \lfloor\HD(X)\rfloor
\]
Consequently, $\HD(X)=+\infty$ whenever $\tDHD(X)\geq \omega_0$.
\end{theorem}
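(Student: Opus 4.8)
The plan is to prove the two inequalities separately and then read off the final assertion by contraposition.

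The inequality $D(X) \leq \tDHD(X)$ I expect to follow directly from the way $\tDHD$ is built on top of the $D$-dimension, with no hypothesis on $\HD$. Concretely, I would argue by transfinite induction that for every ordinal $\alpha$ the statement ``$\tDHD(X) \leq \alpha$'' is logically stronger than ``$D(X) \leq \alpha$'': the recursive clause defining $\tDHD(X) \leq \alpha$ demands the same kind of low-dimensional separators (or kernels) that the $D$-dimension recursion demands, together with an extra Hausdorff-measure control. Deleting the measure condition recovers exactly the clause for $D(X) \leq \alpha$, so the least ordinal witnessing $\tDHD(X)\leq\alpha$ is at least the least ordinal witnessing $D(X)\leq\alpha$, which gives $D(X) \leq \tDHD(X)$.

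For the upper bound $\tDHD(X) \leq \lfloor \HD(X)\rfloor$ I would induct on the integer $n := \lfloor \HD(X)\rfloor$, which is finite by hypothesis. In the base case $n=0$ we have $\HD(X)<1$; such a space is zero-dimensional, and the finiteness of the relevant Hausdorff measure on $X$ itself lets one verify the clause $\tDHD(X)\leq 0$ directly. For the inductive step fix $s$ with $\HD(X)<s<n+1$, which is possible since $\HD(X)\in[n,n+1)$, so that $\cH^s(X)=0$. Given a pair to be separated (a point and a neighborhood, or two disjoint closed sets, according to the precise $D$-recursion), take the $1$-Lipschitz distance function $f$ to one of the sets and consider its level sets $f^{-1}(\rho)$, which are separators. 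The Eilenberg coarea inequality bounds $\int_0^r \cH^{s-1}\big(f^{-1}(\rho)\big)\,d\rho$ by a constant times $\cH^s(X)=0$, so for almost every $\rho$ the separator satisfies $\cH^{s-1}\big(f^{-1}(\rho)\big)<\infty$ and hence $\HD\big(f^{-1}(\rho)\big)\leq s-1<n$, i.e. $\lfloor \HD(f^{-1}(\rho))\rfloor\leq n-1$. The inductive hypothesis then gives $\tDHD\big(f^{-1}(\rho)\big)\leq n-1$, while the same coarea estimate supplies the Hausdorff-measure bound the $\tDHD$ clause requires; assembling these separators verifies $\tDHD(X)\leq n$.

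The main obstacle is this inductive step, where two things must be reconciled. First, the Eilenberg inequality must be available in the generality at hand and must produce separators that are genuinely admissible for the $D$-variant recursion: the $D$-dimension is governed by a kernel/separator scheme subtler than that of $\Ind$, so I must check that level sets of distance functions (for almost every level) play the role the recursion demands, not merely that they topologically separate. Second, the Hausdorff-measure side condition attached to the $\tDHD$ clause has to be matched precisely to the finiteness $\cH^{s-1}(f^{-1}(\rho))<\infty$ coming out of the coarea estimate, and aligning the exponents with the ``almost every $\rho$'' selection is where the real bookkeeping lives. Once both inequalities are established, the final assertion is immediate: if $\HD(X)<+\infty$ then $\tDHD(X)\leq\lfloor\HD(X)\rfloor<\omega_0$, so $\tDHD(X)\geq\omega_0$ can only occur when $\HD(X)=+\infty$.
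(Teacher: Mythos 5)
Your proposal is built on a definition of $\tDHD$ that is not the one in the paper, and this undermines both halves of the argument. In the paper, $\tDHD(X)$ is not given by any recursive separator-or-kernel clause with a Hausdorff-measure side condition; it is defined outright as
\[
\tDHD(X) \;=\; \sup\{D(\mIm(f)) : f \in \mathcal{L}(X)\},
\]
the supremum of the $D$-dimensions of images of Lipschitz maps defined on arbitrary metric subspaces of $X$. There is no ``clause for $\tDHD(X)\leq\alpha$'' from which one can delete a measure condition, so your transfinite induction for $D(X)\leq\tDHD(X)$ has nothing to induct on; in the actual setup this inequality is immediate, since the identity map on $X$ lies in $\mathcal{L}(X)$ and contributes the term $D(X)$ to the supremum.

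The same misreading defeats your upper bound. Because $\tDHD(X)$ is a supremum over all Lipschitz images of all subspaces, any proof of $\tDHD(X)\leq\lfloor\HD(X)\rfloor$ must bound $D(\mIm(f))$ for \emph{every} such $f$; verifying a separator recursion on $X$ alone cannot do this. The paper handles the quantifier over $f$ with three facts: Hausdorff dimension is monotone under passing to subspaces, Lipschitz maps do not increase Hausdorff dimension, and Marczewski's theorem bounds $D$ by $\HD$ when $\HD$ is finite (these images are then finite dimensional, where $D$ agrees with $\Ind$). Chaining these gives $\HD(X) \geq \HD(\mathrm{Domain}(f)) \geq \HD(\mIm(f)) \geq D(\mIm(f))$, and the supremum, being an integer, is at most $\lfloor\HD(X)\rfloor$. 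Your Eilenberg coarea argument is, in substance, a proof of Marczewski's inequality $\Ind(Y)\leq\lfloor\HD(Y)\rfloor$, so it could be salvaged by applying it to each image $Y=\mIm(f)$ rather than to $X$ itself; but as written it is aimed at the wrong object and never confronts the supremum over Lipschitz maps, which is the entire content of the definition of $\tDHD$. The closing contraposition is the only step that survives intact.
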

and 
\begin{theorem}\label{BDTHM2}
If a metric space $X$ has a topological base $\cB_{\alpha}$ with $\scard(\cB_{\alpha})\leq \aleph_\alpha$ and \\$D(X)<\Omega$, then $\tDHD(X) \leq \omega_{\alpha+1}$. So, if $X$ is also separable, then $\tDHD(X)\leq \omega_1$.
\end{theorem}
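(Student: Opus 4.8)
The plan is to follow the classical template for bounding a transfinite inductive dimension by the weight of the space (as done for $\trInd$ in Engelking's Dimension Theory), adapted to the transfinite recursion that defines $\tDHD$ through the $D$-dimension. First I would unwind $\tDHD(X)$ into its underlying recursion, whose stages are indexed by ordinals, and isolate the role of each hypothesis. The assumption $D(X)<\Omega$ is used precisely to guarantee that this recursion is everywhere defined: by monotonicity of the $D$-dimension on the subspaces that arise along the recursion, each such subspace again has $D<\Omega$, so no stage produces the symbol $\Omega$ and $\tDHD(X)$ is a genuine ordinal rather than $\Omega$. At this point the remaining task is purely to bound how far the recursion can run, and that is where the base $\cB_\alpha$ enters.

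Next I would exhibit, at each successor stage where the relevant dimension strictly drops, a witness drawn from the base $\cB_\alpha$ (or from finite/countable tuples of its members), chosen so that distinct active stages receive distinct witnesses. Concretely, each genuine drop is realized by a partition or boundary that can be taken to be generated by elements of $\cB_\alpha$; recording such a generating datum produces an injection from the set of active stages into a set of cardinality at most $\aleph_\alpha$. Since the stages are well-ordered, their order type is then an ordinal of cardinality at most $\aleph_\alpha$, hence strictly below $\omega_{\alpha+1}$, because $\omega_{\alpha+1}$ is the least ordinal of cardinality $\aleph_{\alpha+1}>\aleph_\alpha$. Assembling the per-stage data by the supremum prescribed in the definition of $\tDHD$ then upgrades this to the stated bound $\tDHD(X)\le\omega_{\alpha+1}$; the possibility of equality rather than strict inequality arises exactly because a supremum of ordinals each below $\omega_{\alpha+1}$ can equal $\omega_{\alpha+1}$.

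The main obstacle I anticipate is the construction in the second step: setting up a clean, injective encoding of recursion stages by base data. Unlike the purely topological $\trInd$ recursion, the $\tDHD$ recursion carries a Hausdorff-dimension (metric) component, and limit stages must be handled so that they introduce no witnesses beyond the allotted cardinality. The crux is to show that real progress at each stage is always detectable from $\cB_\alpha$ alone, so that the number of stages is governed by $\scard(\cB_\alpha)\le\aleph_\alpha$ rather than by the full cardinality of $X$. This is the point at which the kernel structure of the $D$-dimension must be made to interact cleanly with the base, and it is the heart of the argument.

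Finally, the separable case is immediate: a separable metric space is second countable, so it admits a base with $\scard(\cB_0)\le\aleph_0$, and taking $\alpha=0$ in the general bound gives $\tDHD(X)\le\omega_1$.
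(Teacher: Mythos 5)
Your proposal does not follow the paper's route, and, more importantly, it leaves the decisive steps unproved. The paper's proof is short and citation-based: for every $f\in\cL(X)$ with domain $E\in\cP_m(X)$, the image $\mIm(f)$ is a metrizable space whose density is at most the density of $E$, hence at most $\scard(\cB_\alpha)\leq\aleph_\alpha$; since weight and density coincide for metrizable spaces (\cite[Theorem 4.1.15]{Eng89}) and density does not increase under continuous maps, every such image again has weight at most $\aleph_\alpha$; Henderson's weight bound for $D$-dimension (\cite[Theorem 10]{hendersonDimensionNewTransfinite1968}) then bounds $D(\mIm(f))$ by $\om_{\alpha+1}$, and the theorem follows by taking the supremum in Definition \ref{def:tDHD}. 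Your plan never makes contact with this structure. First, $\tDHD$ is not defined by a transfinite recursion whose ``stages'' one can count: it is $\sup\{D(\mIm(f)):f\in\cL(X)\}$, and $D$ itself is defined through decompositions into closed finite-dimensional sets, not through partitions or boundaries as $\trInd$ is. So the $\trInd$-style template of encoding each dimension drop by a base element has nothing in the actual definitions to attach to; you yourself flag this encoding as ``the heart of the argument'' and leave it unconstructed, which means the central claim --- a weight bound for $D$-dimension --- is simply not established in your proposal, whereas the paper obtains it by citing Henderson.

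Second, even granting such a stage-counting argument, running it inside $X$ would not suffice: the supremum defining $\tDHD(X)$ ranges over images $f(E)\subseteq Y$ for \emph{arbitrary} metric spaces $Y$, and these images are not subspaces of $X$, so a base of $X$ cannot directly furnish witnesses for anything happening in them. The indispensable bridge --- that each image inherits the weight bound $\aleph_\alpha$ via the density argument above --- is exactly the Engelking ingredient of the paper's proof, and it is absent from your write-up. Relatedly, your use of the hypothesis $D(X)<\Omega$ is unfounded: you claim it propagates ``by monotonicity of the $D$-dimension on the subspaces that arise along the recursion,'' but the objects that matter are Lipschitz images, and $D$ is emphatically not monotone under Lipschitz maps --- the whole point of this paper (Theorem \ref{BDTHM3}) is that $D(C_\alpha)=0$ while $\tDHD(C_\alpha)\geq\alpha$. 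Ruling out $D(\mIm(f))=\Omega$ for every image is therefore a genuine issue that monotonicity cannot settle. Your final paragraph (the separable case via $\alpha=0$) is fine, but it rests entirely on the unproved general bound.
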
 

Further the $D$-variant of transfinite Hausdorff Dimension detects structure in metrizable spaces for which $D$-dimension fails to do so

\begin{theorem}\label{BDTHM3}
For each $\alpha \in \ord$, there exists a metrizable space, $X_\alpha$, such that $D(X_\alpha)=0$ and $\alpha \leq \tDHD(X_\alpha) \leq \om_{\tau}$ where $\tau\in\ord$ is least such that $\om_{\tau}> \alpha$. 
\end{theorem}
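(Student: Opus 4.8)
The plan is to build $X_\alpha$ by transfinite recursion on $\alpha$, maintaining throughout the stronger \emph{pointed} invariant: each $X_\beta$ carries a distinguished basepoint $b_\beta$ such that every neighborhood of $b_\beta$ has $\tDHD\ge\beta$, while $X_\beta$ is zero-dimensional with a clopen base and weight $w(X_\beta)\le\max(\aleph_0,|\beta|)$. The two upper bounds are then essentially free: since each $X_\alpha$ is zero-dimensional we get $D(X_\alpha)=0<\Omega$; for infinite $\alpha$, writing $\om_\gamma\le\alpha<\om_{\gamma+1}$ (so $\tau=\gamma+1$ and $\om_\tau=\om_{\gamma+1}$) the weight bound gives $w(X_\alpha)\le\aleph_\gamma$, and Theorem~\ref{BDTHM2} yields $\tDHD(X_\alpha)\le\om_{\gamma+1}=\om_\tau$; for finite $\alpha$ (where $\tau=0$) I will instead keep $\HD(X_\alpha)$ finite, so Theorem~\ref{BDTHM1} gives $\tDHD(X_\alpha)\le\lfloor\HD(X_\alpha)\rfloor<\om=\om_0$. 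Thus the entire content is the \emph{lower} bound $\tDHD(X_\alpha)\ge\alpha$, delivered by the invariant, together with checking that each construction step preserves zero-dimensionality and the weight bound.

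The base case $X_0$ is a point. For the \textbf{successor} step, given $(X_\beta,b_\beta)$ I set $X_{\beta+1}=X_\beta\times C$, where $C$ is a self-similar Cantor set with $1<\HD(C)<2$, and $b_{\beta+1}=(b_\beta,c_0)$. A product of two zero-dimensional metrizable spaces is zero-dimensional, so $D(X_{\beta+1})=0$ and the weight is unchanged; at finite stages $\HD(X_{\beta+1})\le\HD(X_\beta)+2$ stays finite. Because $C$ is self-similar, every neighborhood of $c_0$ contains a clopen scaled copy $C'$ of $C$, so every neighborhood of $b_{\beta+1}$ contains $U\times C'$ for some neighborhood $U$ of $b_\beta$ with $\tDHD(U)\ge\beta$. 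The step then rests on a \emph{product inequality} $\tDHD(U\times C')\ge\tDHD(U)+\lfloor\HD(C')\rfloor\ge\beta+1$, which I would prove (or extract from the coarse intermediate-dimension machinery) and which is consistent with the finite-dimensional bound $\tDHD\le\lfloor\HD\rfloor$ of Theorem~\ref{BDTHM1}. This restores the invariant at $\beta+1$.

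For a \textbf{limit} $\lambda$, I fix a cofinal family $\{\beta_\xi:\xi<\mathrm{cf}(\lambda)\}$ and form the \emph{zero-dimensional hedgehog} $X_\lambda=\bigvee_\xi X_{\beta_\xi}$: take scaled copies of each $X_{\beta_\xi}$ of diameter $\le 1$, identify all basepoints to one point $*=b_\lambda$, and metrize by $d(x,y)=d(x,*)+d(*,y)$ across distinct spines. This works even when $\mathrm{cf}(\lambda)>\om$, since in the hedgehog metric every ball $B(*,\epsilon)$ meets every spine in a neighborhood of its basepoint; choosing in each spine a small clopen neighborhood of its (zero-dimensional) basepoint exhibits a clopen base at $*$, so $D(X_\lambda)=0$, while $w(X_\lambda)\le\mathrm{cf}(\lambda)\cdot\sup_\xi w(X_{\beta_\xi})\le\max(\aleph_0,|\lambda|)$. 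Crucially, $B(*,\epsilon)$ contains a neighborhood of $b_{\beta_\xi}$ in each spine, hence a subspace of $\tDHD\ge\beta_\xi$ for every $\xi$; by monotonicity every neighborhood of $*$ has $\tDHD\ge\sup_\xi\beta_\xi=\lambda$, which both gives $\tDHD(X_\lambda)\ge\lambda$ and restores the invariant.

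I expect the main obstacle to be the two lower-bound mechanisms, both of which must be read off the precise definition of $\tDHD$: the successor product inequality $\tDHD(A\times C)\ge\tDHD(A)+1$ for a Cantor factor of Hausdorff dimension exceeding $1$, and the claim that at the hedgehog basepoint the $D$-Hausdorff peeling genuinely accumulates to $\lambda$ rather than stabilizing below it, especially when $\mathrm{cf}(\lambda)>\om$ so that no sequence converges to $*$. The zero-dimensionality and weight bookkeeping (clopen bases, and the sum and product theorems for $D$-dimension at level $0$) are routine by comparison, as is the final assembly via Theorems~\ref{BDTHM1} and~\ref{BDTHM2}.
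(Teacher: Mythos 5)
Your construction is genuinely different from the paper's (which takes $X_\alpha$ to be the Smirnov Cantor set $C_\alpha$), but it has a gap at its load-bearing step: the successor-stage product inequality $\tDHD(U\times C')\ge \tDHD(U)+1$, which you explicitly defer, is not a routine lemma --- it is where essentially all of the difficulty of the theorem lives, and the natural route to it fails. Given $f\in\cL(U)$ with $D(\mIm(f))$ large and the Lipschitz surjection $\phi\colon C'\to I$, the product maps $f\times\phi$ only produce images of the form $Z\times I$ with $Z=\mIm(f)$, so your inequality would reduce to a lower product theorem $D(Z\times I)\ge D(Z)+1$ for metrizable $Z$. That statement is false: for $Z=\bigoplus_{n<\om_0} I^n$ (metric disjoint sum) the subspace theorem and the locally finite sum theorem (Theorem \ref{thrm:dlocalfin}) give $D(Z)=\om_0$, and $Z\times I=\bigoplus_{n<\om_0} I^{n+1}$ likewise has $D(Z\times I)=\om_0$, not $\om_0+1$. $D$-dimension gains a unit under product with $I$ only in the presence of accumulation structure (criteria \ref{D:2} and \ref{D:4} of Definition \ref{def:d}); this is exactly why Smirnov's spaces, whose limit stages are one-point compactifications with shrinking spines, satisfy $D(S_{\beta+1})=D(S_\beta)+1$. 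Your hedgehog metric discards precisely that feature --- every spine keeps diameter comparable to $1$ and no neighborhood of $*$ contains a whole spine --- so after the first limit stage there is no reason your invariant survives the next successor step. The paper avoids needing any product theorem for $\tDHD$ or $D$: the lower bound $\tDHD(C_\alpha)\ge\alpha$ comes from explicit Lipschitz surjections $C_\alpha\to S_\alpha$ (Lemma \ref{metric3} and Lemma \ref{lemma:calpha}, extended past $\om_1$) combined with Henderson's computation $D(S_\alpha)=\alpha$ (Theorem \ref{thrm:Salpha}); the upper bound comes from the density estimate (Lemma \ref{cardD}) fed into Theorem \ref{thrm:tdhdbase}; and $D(C_\alpha)=0$ comes from the excision theorem (Theorem \ref{thrm:d4}) together with Theorem \ref{thrm:dlocalfin}.

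A secondary, repairable gap is your zero-dimensionality bookkeeping. ``Zero-dimensional with a clopen base'' is $\mathrm{ind}=0$, whereas $D(X_\lambda)=0$ requires $\Ind(X_\lambda)=0$, and for non-separable metrizable spaces (your limit stages of cofinality $>\om_0$ are non-separable) $\mathrm{ind}=0$ does not imply $\Ind=0$ (Roy's example). Moreover, the set you propose as a basic clopen neighborhood of $*$ --- a union of clopen-in-spine neighborhoods of the basepoints --- need not be a neighborhood of $*$ at all, since those clopen sets have no uniform inradius; fixing this requires $\Ind=0$ in each spine so that one can interpose a clopen set between $\overline{B(*,\delta)}$ and $B(*,\epsilon)$ spine by spine. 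The clean way to get $D=0$ at limit stages is the paper's: excise the wedge point via Theorem \ref{thrm:d4} and apply Theorem \ref{thrm:dlocalfin} to what remains. But even with that repaired, the missing successor-step inequality in the first paragraph remains the essential obstruction to your approach.
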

The present paper draws from and improves upon the first authors Ph.D. thesis. 
\subsection{Preliminaries}
Throughout this work, $\ord$ will denote the class of ordinals. An important subclass is the collection of initial ordinals $\card\subset \ord$ defined by the collection of ordinals obtained by the von Neumann cardinal assignment. That is $\scard(A) := \inf\{\alpha\in\ord\;|\; \exists f:\alpha\to A  \text{ a bijection}\}$. We will also use $\aleph_{\alpha}$ notation for cardinals for simplicity where appropriate. Another class of ordinals that appears in this work is that of the additively indecomposable ordinals, $\HH\subset \ord$. Ordinals in $\HH$ are precisely those of the form $\om^{\delta}$ for some $\delta\in\ord$. For any ordinal $\alpha$ we define $\lambda(\alpha)$ to be the largest limit ordinal $\beta$ such that $\beta<\alpha$ and $\lambda(\alpha)=0$ if $\alpha<\beta$ for every limit ordinal $\beta$, and $n(\alpha)$ to be the finite part of $\alpha$. Further background on ordinal analysis can be found in \cite[Chapter 3]{pohlersProofTheoryIntroduction1989} Note that

\begin{lemma}\label{LEM:ords}
For ordinals $\alpha$ and $\beta$, with $\beta\leq \lambda(\alpha)$, we have that 
\begin{equation}\notag
\lambda(\alpha)-\beta=\lambda(\lambda(\alpha)-\beta).
\end{equation}
\begin{proof}
We will show this by transfinite induction on $\beta$. Fix $\alpha$.
If $\beta=0$ then it is obvious.
For the successor case, suppose $\beta=\delta+1$, and that the equality holds for $\delta$. Then 
\begin{align*}
    \lambda(\alpha)-\beta & =\lambda(\alpha)-\delta-1=\lambda(\lambda(\alpha)-\delta)-1 \\ & =\lambda(\lambda(\alpha)-\delta)=\lambda(\lambda(\alpha)-\delta-1+1)\\ & =\lambda(\lambda(\alpha)-\beta+1) =\lambda(\lambda(\alpha)-\beta),
\end{align*} since adding one will be ignored by the limit ordinal function. Lastly for the limit case, suppose the equality holds for all $\delta<\beta$. As $\beta$ is a limit ordinal we have that $\beta=\lambda(\beta)$ and the equality follows.
\end{proof}
\end{lemma}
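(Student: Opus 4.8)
The plan is to bypass the transfinite induction entirely and argue directly that subtracting any $\beta \le \lambda(\alpha)$ from $\lambda(\alpha)$ cannot produce a successor ordinal. First I would fix $\alpha$ and set $\mu := \lambda(\alpha)$. By the definition of $\lambda$, the ordinal $\mu$ is either $0$ or a limit ordinal, and in both of these cases $\lambda$ acts as the identity; indeed $\lambda(0)=0$, and $\lambda(\mu)=\mu$ whenever $\mu$ is a limit ordinal. Hence the whole statement reduces to the single claim that the (left) difference $\mu - \beta$ is again $0$ or a limit ordinal, for then $\lambda(\mu-\beta)=\mu-\beta=\lambda(\alpha)-\beta$, which is exactly the asserted equality.

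Next I would make the difference precise: since $\beta \le \mu$, ordinal subtraction yields the unique ordinal $\gamma$ with $\beta + \gamma = \mu$, and we write $\gamma = \mu - \beta$. The key step --- and essentially the only place anything happens --- is to rule out that $\gamma$ is a successor. Suppose toward a contradiction that $\gamma = \delta + 1$ for some ordinal $\delta$. Then
\[
\mu = \beta + \gamma = \beta + (\delta + 1) = (\beta + \delta) + 1,
\]
so $\mu$ would be a successor ordinal, contradicting the fact that $\mu = \lambda(\alpha)$ is $0$ or a limit ordinal. Therefore $\gamma$ is either $0$ (which occurs exactly when $\beta = \mu$) or a limit ordinal, and in either case $\lambda(\gamma) = \gamma$, completing the argument.

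I expect the only real obstacle to be bookkeeping about conventions rather than mathematical content: one must fix that $\mu - \beta$ denotes left subtraction (the unique $\gamma$ with $\beta + \gamma = \mu$, which exists and is well defined precisely because $\beta \le \lambda(\alpha)$), and one must confirm at the outset that $\lambda$ fixes $0$ and every limit ordinal, since the entire reduction rests on that. The degenerate case $\mu = 0$ (i.e. $\alpha$ finite) forces $\beta = 0$ and is immediate. If one preferred to stay closer to the structure already present in the paper, the same facts could instead be packaged as a transfinite induction on $\beta$: the base case $\beta = 0$ is trivial, the successor case uses $\lambda(\sigma + 1) = \lambda(\sigma)$, and the limit case uses that $\beta = \lambda(\beta)$ for limit $\beta$; but the direct argument above seems cleaner and avoids the induction altogether.
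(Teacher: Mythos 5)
Your proof is correct, and it takes a genuinely different route from the paper. The paper proves the identity by transfinite induction on $\beta$: the successor step uses the inductive hypothesis together with the convention that a limit ordinal minus $1$ is itself, and the limit step is dispatched in one line (``as $\beta$ is a limit ordinal, $\beta=\lambda(\beta)$ and the equality follows''), which is really the weakest point of that argument, since it is not explained how the inductive hypothesis is used there at all. You instead observe that the claim is equivalent to saying that $\lambda(\alpha)-\beta$ is a fixed point of $\lambda$, i.e.\ is $0$ or a limit ordinal, and you prove that directly: if $\lambda(\alpha)-\beta=\delta+1$ then $\lambda(\alpha)=\beta+(\delta+1)=(\beta+\delta)+1$ would be a successor, contradicting that $\lambda(\alpha)$ is $0$ or a limit. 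This is shorter, needs no induction, and in particular covers uniformly the limit case that the paper glosses over. The one hypothesis your argument leans on --- that $\lambda(\alpha)-\beta$ means left subtraction, the unique $\gamma$ with $\beta+\gamma=\lambda(\alpha)$ --- is the reading consistent with how the lemma is actually used later (e.g.\ in Theorem \ref{thrm:dinter}, where re-indexing a tail $\bigcup_{\alpha\le\beta\le\lambda(D(X))}A_\beta$ from $0$ and the identity $\alpha+(\lambda(D(X))-\alpha)=\lambda(D(X))$ both presuppose it), so flagging it explicitly, as you do, is a genuine improvement in rigor rather than a defect; note, though, that the paper is not fully consistent on this point elsewhere (the proof of Lemma \ref{LEM:Ord2} passes from $\lambda(\tDHD(X))-\beta=\omega^\gamma$ to $\lambda(\tDHD(X))=\omega^\gamma+\beta$, which is the other subtraction). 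You also correctly read the paper's definition of $\lambda$ as the limit part in the decomposition $\alpha=\lambda(\alpha)+n(\alpha)$, so that $\lambda$ fixes $0$ and every limit ordinal, which is what the paper's own limit case assumes.
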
 

In the following section we prove a handful of new properties for $D$-Dimension that are necessary for the development of the $D$-variant of Hausdorff dimension. Here we collect together a handful of known results regarding $D$-dimension and the large inductive dimension for use later on. We begin with the following definition of Ind, additional properties of Ind can be found in \cite[Chapter 2]{engelkingTheoryDimensionsFinite1995a}. 
\begin{definition}\label{def:Ind}
To every normal space $X$ one assigns the \textit{large inductive dimension} of $X$, denoted by $\mInd (X)$, which is an integer larger than or equal to $-1$, or the ``infinite number" $\infty$; the definition of Ind is given by the following conditions:
\begin{enumerate}[label=\normalfont(\roman*)]
    \item $\Ind (X)=-1$ if and only if $X=\emptyset$; \label{Ind:1}
    \item $\Ind (X)\leq n$, where $n=0,1,...,$ if for every closed set $A\subset X$ and each neighbourhood $V\subset X$ which contains the set $A$ there exists an open set $U\subset X$ such that 
    \[
        A \subset U \subset V \text{ and } \Ind (\partial U) \leq n-1 \text{;}
    \]\label{Ind:2}
    \item $\Ind (X)=n$ if $\Ind (X)\leq n$ and $\Ind (X)>n-1$, i.e., the inequality $\Ind (X)\leq n-1$ does not hold;\label{Ind:3}
    \item $\Ind (X)=\infty$ if $\Ind (X)>n$ for all $n\in\{-1\}\cup\om_0$\label{Ind:4}
\end{enumerate}
\end{definition}

Ind satisfies a locally finite sum theorem which we recall here in the context of metrizable spaces.
\begin{theorem}[Locally Finite Sum Theorem for Ind (c.f. \cite{engelkingTheoryDimensionsFinite1995a} 2.3.10)]\label{THM:sumInd}
If a metrizable space X can be represented as the union of a locally finite family $\{F_s\}_{s\in S}$ of closed subspaces such that $\Ind (F_s) \leq n$ for $s\in S$, then $\Ind (X) \leq n$.

\end{theorem}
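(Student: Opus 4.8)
The plan is to argue by induction on $n$, using the inductive characterization of $\Ind$ from Definition~\ref{def:Ind} together with the special structure of metrizable spaces (paracompactness, collectionwise normality, and total normality). The base case $n=-1$ is immediate: $\Ind F_s\le -1$ forces every $F_s=\emptyset$, hence $X=\emptyset$ and $\Ind X=-1$. For the inductive step I assume the statement at level $n-1$.

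For the inductive step, fix a closed set $A\subseteq X$ and an open set $V$ with $A\subseteq V$; I must produce an open $U$ with $A\subseteq U\subseteq V$ and $\Ind(\partial U)\le n-1$. The idea is to separate $A$ from $X\sms V$ on each piece and then patch. Since $\Ind F_s\le n$, inside each $F_s$ one obtains a relatively open set $U_s$ with $A\cap F_s\subseteq U_s\subseteq V\cap F_s$ and $\Ind(\partial_{F_s}U_s)\le n-1$. Using paracompactness to pass to a locally finite open swelling $\{W_s\}$ of $\{F_s\}$, and collectionwise/total normality to keep the swellings controlled, these local separators can be assembled into a single open $U\subseteq V$ containing $A$ whose boundary $\partial U$ is covered by the locally finite closed family $\{\partial_{F_s}U_s\}_s$ together with the ``seams'' where the swellings overlap. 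The inductive hypothesis (the locally finite sum theorem at level $n-1$) then applies to this family, whose members all have $\Ind\le n-1$, yielding $\Ind(\partial U)\le n-1$ by monotonicity on the closed set $\partial U$, and hence $\Ind X\le n$.

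The crux, and the step I expect to be the main obstacle, is the patching, because $\Ind$ notoriously fails the sum theorems in general normal spaces, so the construction must genuinely exploit the metric hypothesis. Two points require care. First, the $F_s$ may overlap, so the local separators must be reconciled there; this is where a locally finite partition subordinate to $\{W_s\}$ and the countable closed sum theorem for $\Ind$, valid in totally normal (hence metrizable) spaces, do the real work of certifying that the assembled boundary still has dimension $\le n-1$. Second, one cannot simply split $\{F_s\}$ into countably many discrete subfamilies and treat the pieces separately: there exist locally finite closed covers of (necessarily non-separable) metric spaces that fail to be $\sigma$-discrete, so the argument must respect the full locally finite structure rather than decomposing it away. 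An alternative and arguably cleaner route is to invoke the Katetov--Morita coincidence $\Ind X=\dim X$ for metrizable $X$ and prove the corresponding locally finite sum theorem for the covering dimension $\dim$, where each finite open cover can be refined on every $F_s$ to order $\le n+1$ and then recombined using local finiteness; the same overlap-control difficulty reappears there as the task of bounding the order of the recombined cover.
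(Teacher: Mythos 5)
First, note that the paper does not prove Theorem \ref{THM:sumInd} at all: it is recalled as a known result, with the proof deferred to \cite[Theorem 2.3.10]{engelkingTheoryDimensionsFinite1995a}, so your argument has to stand entirely on its own --- and as written it does not. The inductive frame (induction on $n$, base case $n=-1$, separating $A\cap F_s$ from $F_s\setminus V$ inside each $F_s$) is the right shape, but everything that makes the theorem true is concentrated in the step you yourself call the crux and then do not carry out. The sets $U_s$ are open only in the subspace $F_s$; neither $\bigcup_s U_s$ nor any unspecified swelling of it is open in $X$, and the boundary in $X$ of whatever open set you eventually build is in no way shown to be covered by $\bigcup_s \partial_{F_s}U_s$ together with ``seams'' of dimension $\leq n-1$. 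That covering claim is precisely the content of the theorem; invoking paracompactness, collectionwise normality, and total normality does not produce it. (Even for the countable sum theorem, which your sketch takes as available, this patching is a delicate recursive construction, not a consequence of general separation axioms.)

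Second, the obstruction you raise against the reduction route is not a real obstruction, and removing it yields a complete proof. One does not need to split the index family $\{F_s\}_{s\in S}$ into countably many discrete subfamilies; one decomposes the sets themselves. For each finite nonempty $T\subseteq S$ put $F_T=\bigcap_{t\in T}F_t$ and $C_T=\bigcup_{u\notin T}F_u$; local finiteness makes $C_T$ closed, so $E_{T,m}=\{x\in F_T : \dist(x,C_T)\geq 1/m\}$ is closed, and $\Ind E_{T,m}\leq n$ since $E_{T,m}$ is a closed subset of some $F_t$. Every $x\in X$ lies in some $E_{T,m}$: take $T=\{s : x\in F_s\}$, which is finite and nonempty, and note $x\notin C_T$ with $C_T$ closed, so $\dist(x,C_T)>0$. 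Moreover, for fixed $m$ the family $\{E_{T,m}\}_T$ is uniformly discrete: if $y\in E_{T,m}$ and $z\in E_{T',m}$ with $T\neq T'$, choose $u$ in one index set but not the other, say $u\in T'\setminus T$; then $z\in F_{T'}\subseteq F_u\subseteq C_T$, whence $d(y,z)\geq \dist(y,C_T)\geq 1/m$ (the case $u\in T\setminus T'$ is symmetric). Consequently each $Z_m=\bigcup_T E_{T,m}$ is the union of a discrete family of closed sets, hence closed, and $\Ind Z_m\leq n$ by the elementary discrete sum theorem (a discrete union of closed sets is their topological sum, and $\Ind$ of a sum is the supremum over summands, by an easy induction). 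Finally $X=\bigcup_{m}Z_m$ gives $\Ind X\leq n$ by the countable sum theorem for metrizable spaces --- the same prerequisite your own sketch assumes. This reduction, or alternatively the route through the Katetov--Morita theorem together with the locally finite sum theorem for $\dim$ in normal spaces \cite{engelkingDimensionTheory1978} (which you mention only to dismiss, although there the overlap problem is genuinely resolved by cover combinatorics), is what should replace your patching paragraph.
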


We also recall the definition of $D$-dimension and some of its properties due to Henderson \cite{hendersonDimensionNewTransfinite1968} additional resources for the study of $D$-dimension include \cite{engelkingDimensionTheory1978, hendersonDimensionNewTransfinite1968, hendersonDimensionIISeparable1968,  kozlovskiiDveTeoremyMetricheskih1972,olszewskiDdimensionMetrizableSpaces1991a}.
\begin{definition}[Defintion $(D(X))$. in \cite{hendersonDimensionNewTransfinite1968}]\label{def:d}
Define $D(\emptyset)=-1$. For a nonempty metrizable space, $X$, the $D$\textit{-dimension of }$X$, $D(X)$, is defined to be the smallest $\beta\in\ord$, if any exist, such that $X=\bigcup \{A_\alpha:0\leq \alpha \leq \lambda(\beta)\}$, where 
\begin{enumerate}[label=\normalfont(\roman*)]
    \item each $A_\alpha$ is a closed finite dimensional $($with respect to \dInd$)$ subset of $X$;\label{D:1}
    \item for each $\delta$, $\bigcup \{A_\alpha:\delta\leq\alpha\leq\lambda(\beta)\}$ is closed in $X$;\label{D:2}
    \item $n(\beta)= \Ind(A_{\lambda(\beta)})$, if $A_{\lambda(\beta)} =\emptyset$ then define $n(\beta)=0$;\label{D:3}
    \item for each $x\in X$, there is a largest $\delta \leq \lambda(\beta)$, such that $x\in A_\delta$. \label{D:4}
\end{enumerate}
If no such $\beta$ exists then we set $D(X)$ equal to the symbol $\Omega$. If the conditions above are satisfied then we will say that $X=\bigcup \{A_\alpha:0\leq \alpha \leq \lambda(\beta)\}$ is a $\beta \mh D$ \textit{representation of} $X$. Whenever $\lambda$ is a limit ordinal we take $\lambda + (-1)=\lambda$. When taking a $D(X)\mh D$ representation of $X$, we use the notation $X=\bigcup\{A_\alpha:0\leq \alpha\leq \gamma\},$ where it is understood that $\gamma=\lambda(D(X))$ and $\mInd(A_\gamma)=n(D(X))$. Thus, we can say this representation is a ($\gamma +\mInd(A_\gamma))\mh D$ representation. 

\end{definition} 

The $D$-Dimension function satisfies a tropical addition theorem as a corollary to its own Locally Finite Sum Theorem.
\begin{theorem}[The Locally Finite Sum Theorem For $D$ (see \cite{hendersonDimensionNewTransfinite1968} Theorem 3)]\label{thrm:dlocalfin}
If $X$ is the union of a locally finite collection of closed subsets each with $D\mh$dimension $\leq \beta$, then $D(X)\leq \beta.$
\end{theorem}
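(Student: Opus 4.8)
The plan is to build a single $D\mh D$ representation of $X$ directly from chosen representations of the pieces, letting local finiteness do all of the topological work and reserving the Locally Finite Sum Theorem for $\Ind$ (Theorem \ref{THM:sumInd}) for the dimension bookkeeping. Assuming $\beta\in\ord$ and each $F_s$ nonempty, fix for every member of the given locally finite family $\{F_s\}_{s\in S}$ a $D(F_s)\mh D$ representation $F_s=\bigcup\{A^s_\alpha : 0\le \alpha\le \gamma_s\}$, where $\gamma_s=\lambda(D(F_s))\le \lambda(\beta)=:\gamma$, and write $d^s_\alpha=\Ind(A^s_\alpha)$. Two of the four defining conditions come for free. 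Since $\{F_s\}$ is locally finite and each $A^s_\alpha$ is closed, any union of tails $\bigcup_s\bigcup\{A^s_\alpha : \alpha\ge \alpha_s\}$ is a locally finite union of closed sets, hence closed, which will supply condition \ref{D:2}; and since each point lies in only finitely many $F_s$, taking the maximum of the finitely many largest indices furnished by condition \ref{D:4} for the individual $F_s$ yields condition \ref{D:4} for $X$. The entire difficulty is to merge the pieces so that each level is finite dimensional (condition \ref{D:1}) and the top level has $\Ind\le n(\beta)$ (condition \ref{D:3}).

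The naive merge $B_\tau=\bigcup_s A^s_\tau$ fails exactly here: at a fixed level the dimensions $d^s_\tau$ may be unbounded in $s$, so $B_\tau$ can be infinite dimensional. To repair this I would spread each representation out by a dimension-weighted reindexing. Put $M^s_\alpha=\max_{\alpha'\le\alpha} d^s_{\alpha'}$, a finite non-decreasing running maximum, and for the non-top levels $\alpha<\gamma_s$ send $A^s_\alpha$ to the merged level $\phi_s(\alpha)=\alpha+M^s_\alpha$; when $\gamma_s=\gamma$ send the top piece $A^s_\gamma$ to level $\gamma$. Because $M^s_\alpha$ is finite and non-decreasing and ordinal addition is monotone in each argument, $\phi_s$ is order preserving, and since $\gamma$ is a limit ordinal we have $\alpha+M^s_\alpha<\gamma$ for every $\alpha<\gamma$; thus the construction lands in $[0,\gamma]$ with only top pieces reaching $\gamma$. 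Set $B_\tau=\bigcup_s\bigcup\{A^s_\alpha : \phi_s(\alpha)=\tau\}$.

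The point of the running maximum is the dimension count. If $\phi_s(\alpha)=\tau$ and we write $\tau=\mu+m$ with $\mu=\lambda(\tau)$ and $m=n(\tau)$ finite, then $\alpha+M^s_\alpha=\mu+m$ with $M^s_\alpha$ finite forces $\lambda(\alpha)=\mu$ and, writing $\alpha=\mu+p$, gives $p+M^s_\alpha=m$, so $d^s_\alpha\le M^s_\alpha\le m$; in particular only finitely many levels of each $F_s$ reach $\tau$, so the contributing family is locally finite with $\Ind\le n(\tau)$, and Theorem \ref{THM:sumInd} gives $\Ind(B_\tau)\le n(\tau)<\infty$. The top level $B_\gamma=\bigcup_{\gamma_s=\gamma}A^s_\gamma$ consists of pieces of $\Ind\le n(\beta)$, since $D(F_s)\le\beta$ with $\gamma_s=\gamma$ forces $n(D(F_s))\le n(\beta)$, so $\Ind(B_\gamma)\le n(\beta)$ by the same theorem. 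Monotonicity of $\phi_s$ makes each $\{\alpha:\phi_s(\alpha)\ge\tau\}$ an up-set, so the merged tails are unions of tails of the $F_s$ and are closed; conditions \ref{D:1}--\ref{D:4} all hold, exhibiting a $(\gamma+\Ind(B_\gamma))\mh D$ representation and hence $D(X)\le \gamma+\Ind(B_\gamma)\le\gamma+n(\beta)=\beta$.

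The main obstacle is precisely the tension resolved in the previous paragraph: the reindexing that bounds the dimension at each level must simultaneously be order preserving, or else the merged tails cease to be tails of the $F_s$ and one loses closedness (condition \ref{D:2}); a reindexing by the raw dimension $d^s_\alpha$, which need not increase along a representation, would break exactly this. The running maximum $M^s_\alpha$ is the device that keeps the map monotone while pushing each piece far enough up to control its dimension, and the fact that $\gamma=\lambda(\beta)$ is a limit ordinal is what guarantees this pushing never overflows the available length -- the same limit-ordinal arithmetic underlying Lemma \ref{LEM:ords}.
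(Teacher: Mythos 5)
The paper does not actually prove this statement---it quotes it as Theorem~3 of Henderson \cite{hendersonDimensionNewTransfinite1968}---so your argument has to stand entirely on its own. Your plan (reindex each representation in an order-preserving, dimension-controlled way, then merge, letting local finiteness give closedness and Theorem \ref{THM:sumInd} give the dimension bounds) is viable, but the execution has a genuine error: the assertion that $M^s_\alpha=\max_{\alpha'\le\alpha}d^s_{\alpha'}$ is ``a finite non-decreasing running maximum.'' When $\alpha\ge\om_0$ this is a supremum of infinitely many integers, and nothing in Definition \ref{def:d} bounds the level dimensions of a $D$-representation along an infinite initial segment. Concretely, let $Y=\bigoplus_{n<\om_0}I^n$ be the disjoint union metrized so that distinct cubes lie at distance $1$ (so $\{I^n\}_{n<\om_0}$ is locally finite and $Y$, with $I^n$ at level $n$ and empty top, is an $\om_0\mh D$ representation of itself), and let $F=Y\oplus S_{\om_0\cdot 2}$, so that $D(F)=\om_0\cdot 2$ by Corollary \ref{cor:dmax} and Theorem \ref{thrm:Salpha}. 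If $\{A^T_\tau\}_{\tau\le\om_0\cdot2}$ is any $(\om_0\cdot2)\mh D$ representation of $S_{\om_0\cdot2}$, then setting $A_\tau=I^\tau\cup A^T_\tau$ for $\tau<\om_0$ and $A_\tau=A^T_\tau$ for $\om_0\le\tau\le\om_0\cdot2$ gives a legitimate $D(F)\mh D$ representation of $F$ (one checks all four criteria of Definition \ref{def:d} directly) in which $\Ind(A_n)\ge n$ for every $n<\om_0$. Since you fix an \emph{arbitrary} representation of each $F_s$, your construction must accept this one, and then $M^s_\alpha$ does not exist as a finite number for any $\alpha$ with $\om_0\le\alpha<\om_0\cdot2$; these are non-top levels, so $\phi_s$ is simply undefined there. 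Reading the maximum as an ordinal supremum does not help: then $\phi_s(\om_0+k)=\om_0\cdot2$ dumps infinitely many pieces of unbounded dimension onto the top level, destroying both criterion \ref{D:3} and the appeal to Theorem \ref{THM:sumInd}. (A smaller omission: you never say where the top piece $A^s_{\gamma_s}$ goes when $\gamma_s<\gamma$.)

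The flaw is local, and your own device repairs it: take the running maximum only over the $\om_0$-block of $\alpha$, i.e., for $\alpha=\mu+k$ with $\mu=\lambda(\alpha)$ set $M^s_\alpha:=\max_{j\le k}d^s_{\mu+j}$, a maximum of finitely many integers, hence finite and non-decreasing in $k$. Then $\phi_s(\mu+k):=\mu+(k+M^s_{\mu+k})$ maps each block $[\mu,\mu+\om_0)$ into itself, because $\mu+\om_0\le\gamma$ whenever $\mu<\gamma$ (this is the correct place to use that $\gamma=\lambda(\beta)$ is $0$ or a limit ordinal). Order preservation across blocks is automatic, since $\phi_s(\mu+k)<\mu+\om_0\le\mu'\le\phi_s(\mu'+k')$ for $\mu<\mu'$, so tails still pull back to tails and your closedness and criterion \ref{D:4} arguments survive; the dimension count at level $\tau=\mu+m$ is unchanged, since $k+M^s_{\mu+k}=m$ still forces $d^s_{\mu+k}\le M^s_{\mu+k}\le m=n(\tau)$; and the same rule disposes of the unhandled tops, as for $\gamma_s<\gamma$ the piece $A^s_{\gamma_s}$ lands at level $\gamma_s+n(D(F_s))=D(F_s)<\gamma$, whose finite part dominates $\Ind(A^s_{\gamma_s})$. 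With that single change the merge is well defined and your conclusion $D(X)\le\gamma+n(\beta)=\beta$ goes through.
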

\begin{corollary}[See \cite{hendersonDimensionNewTransfinite1968} Corollary to Theorem 3]\label{cor:dmax}
If $X$ is the union of two closed subsets $A$ and $B$, then $$D(X)=\max\{D(A),D(B)\}.$$
\end{corollary}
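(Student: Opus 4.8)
The plan is to prove the two inequalities $D(X)\leq\max\{D(A),D(B)\}$ and $D(X)\geq\max\{D(A),D(B)\}$ separately, adopting the convention that $\Omega$ dominates every ordinal so the statement stays meaningful when one of the $D$-values is $\Omega$. The first inequality is immediate from the Locally Finite Sum Theorem for $D$ (Theorem \ref{thrm:dlocalfin}): if both $D(A)$ and $D(B)$ are ordinals, set $\beta=\max\{D(A),D(B)\}$; then $\{A,B\}$ is a finite---hence locally finite---collection of closed subsets of $X$, each of $D$-dimension $\leq\beta$, so $D(X)\leq\beta$. If either value is $\Omega$, the bound $D(X)\leq\Omega$ is vacuous.

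The second inequality reduces to the monotonicity of $D$ under closed subspaces, namely $D(A)\leq D(X)$ and $D(B)\leq D(X)$. Since $A$ and $B$ are both closed in $X$, it suffices to prove $D(C)\leq D(X)$ for an arbitrary closed $C\subseteq X$ (and we may assume $D(X)$ is an ordinal, as otherwise the bound is trivial). First I would fix a $D(X)\mh D$ representation $X=\bigcup\{A_\alpha:0\leq\alpha\leq\gamma\}$ with $\gamma=\lambda(D(X))$ and $\Ind(A_\gamma)=n(D(X))$, and then restrict it to $C$ by setting $A_\alpha':=A_\alpha\cap C$. The claim is that $C=\bigcup\{A_\alpha':0\leq\alpha\leq\gamma\}$ is again a $D$ representation, of length no larger than $D(X)$, which forces $D(C)\leq D(X)$.

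The main work is verifying conditions \ref{D:1}--\ref{D:4} of Definition \ref{def:d} for the restricted family, and this is where the one external input enters. Each $A_\alpha'=A_\alpha\cap C$ is closed in $C$ and is a closed subspace of $A_\alpha$, so by monotonicity of $\Ind$ on closed subspaces (a standard property of the large inductive dimension, see \cite[Chapter 2]{engelkingTheoryDimensionsFinite1995a}) we get $\Ind(A_\alpha')\leq\Ind(A_\alpha)$; in particular each $A_\alpha'$ is finite dimensional, giving \ref{D:1}. Condition \ref{D:2} holds because $\bigcup\{A_\alpha':\delta\leq\alpha\leq\gamma\}=C\cap\bigcup\{A_\alpha:\delta\leq\alpha\leq\gamma\}$ is the intersection of $C$ with a set closed in $X$, hence closed in $C$; condition \ref{D:4} is inherited from $X$, since for $x\in C$ the relation $x\in A_\delta'$ is equivalent to $x\in A_\delta$, so the largest admissible index is unchanged. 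Finally, \ref{D:3} holds by construction, and together with $\Ind(A_\gamma')\leq\Ind(A_\gamma)$ it shows the representation has length $\gamma+\Ind(A_\gamma')\leq\gamma+\Ind(A_\gamma)=D(X)$, whence $D(C)\leq D(X)$.

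Combining the two inequalities gives $D(X)=\max\{D(A),D(B)\}$, with the $\Omega$ cases handled by the convention above: if, say, $D(A)=\Omega$, then were $D(X)$ an ordinal the restriction argument would produce an ordinal representation of $A$, a contradiction, so $D(X)=\Omega$ as well. I expect the only genuine subtlety to be the length bookkeeping---checking that replacing the finite part $\Ind(A_\gamma)$ by the possibly smaller $\Ind(A_\gamma')$ (with the empty-set convention $\Ind(\emptyset)$ read as $0$) yields an ordinal $\leq D(X)$ rather than a larger one---but since the function $\lambda(\cdot)$ absorbs any finite tail, both representations share the limit part $\gamma$, and the inequality $\Ind(A_\gamma')\leq\Ind(A_\gamma)$ transfers directly to $\gamma+\Ind(A_\gamma')\leq D(X)$.
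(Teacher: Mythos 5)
Your proof is correct and follows essentially the same route as the source the paper cites for this corollary (the Corollary to Theorem 3 in \cite{hendersonDimensionNewTransfinite1968}, which the paper states without reproving): the upper bound is exactly the two-set instance of the Locally Finite Sum Theorem (Theorem \ref{thrm:dlocalfin}), and the lower bound is the Subspace Theorem for $D$-dimension (\cite[Theorem 2]{hendersonDimensionNewTransfinite1968}), invoked elsewhere in the paper, which you re-derive correctly for closed subspaces by restricting a $D$-representation. There are no gaps; in particular your $\Omega$ convention and the length bookkeeping $\gamma+\Ind(A_\gamma')\leq\gamma+\Ind(A_\gamma)=D(X)$ are handled soundly.
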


Another important property for our discussion of the $D$-variant of transfinite Hausdorff dimension is that the $D$-Dimension can be bounded above by way of considering an excision of a closed set from the space of interest,

\begin{theorem}[See \cite{hendersonDimensionNewTransfinite1968} Theorem 4]\label{thrm:d4}
If $F$ is a closed subset of the space $X$, then 
\[
D(X)\leq \lambda(D(X\setminus F))+\max \{n(D(X\setminus F)), D(F)\}\leq D(X\setminus F)+D(F).
\]
\end{theorem}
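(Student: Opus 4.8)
The plan is to produce an explicit $\beta$-$D$ representation of $X$ for $\beta=\lambda(D(X\setminus F))+\max\{n(D(X\setminus F)),D(F)\}$ by stacking a $D$-representation of $F$ on top of one of the open set $G:=X\setminus F$; the second inequality will then fall out of ordinal arithmetic. First I would dispose of the degenerate cases: if $D(F)=\Om$ or $D(G)=\Om$ the right-hand side is $\Om$ and there is nothing to prove, and if $F=\emptyset$ (so $D(F)=-1$) then $X=G$ and $\beta=\lambda(D(G))+n(D(G))=D(G)$, with the convention $\max\{n(D(G)),-1\}=n(D(G))$. So assume $F,G\nonempty$ and $D(F),D(G)\in\ord$. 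Fix a $D(G)$-$D$ representation $G=\bigcup\{B_\alpha:0\le\alpha\le\mu\}$ with $\mu=\lambda(D(G))$ and $\Ind(B_\mu)=m:=n(D(G))$, and a $D(F)$-$D$ representation $F=\bigcup\{C_\alpha:0\le\alpha\le\nu\}$ with $\nu=\lambda(D(F))$ and $\Ind(C_\nu)=k:=n(D(F))$.

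The backbone is a tower of closed tails of $X$. Writing $G_\delta=\bigcup\{B_\alpha:\delta\le\alpha\le\mu\}$, I would set $T_\delta=G_\delta\cup F$ for $0\le\delta\le\mu$ and, when $D(F)>m$, continue with $T_{\mu+\eta}=\bigcup\{C_\alpha:\eta\le\alpha\le\nu\}$ for $0<\eta\le\nu$ (if instead $D(F)\le m$ one stops at $\mu$ and merges the then finite-dimensional $F$ into the top, so that $\beta=\mu+m=D(G)$). The one genuinely load-bearing computation here is that each $T_\delta$ is closed in $X$: since $G_\delta$ is closed in $G$ we have $\overline{G_\delta}^{X}\cap G=G_\delta$, hence $\overline{G_\delta}^{X}\setminus G_\delta\sub F$ and $T_\delta=G_\delta\cup F$ is closed; the tails coming from $F$ are closed in $F$, hence in $X$ because $F$ is closed. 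The tower is plainly decreasing with $T_0=X$, and it is left-continuous at limits, $T_\ell=\intersect_{\delta<\ell}T_\delta$, precisely because each given representation satisfies the ``largest index'' clause \ref{D:4}; this is what will deliver clauses \ref{D:2} and \ref{D:4} for $X$. The top piece is $C_\nu$, giving $\Ind(A_{\lambda(\beta)})=k=n(\beta)$ as required by \ref{D:3}.

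The main obstacle is clause \ref{D:1} for the low pieces (indices $<\mu$): each must be closed in $X$ and finite-dimensional with respect to $\Ind$. The pieces indexed at or above $\mu$ are the $C_\alpha$, which are closed in $X$ and finite-dimensional for free, but the pieces below $\mu$ must come from $G$, and a set closed in $G$ need not be closed in $X$. Worse, one cannot simply replace $B_\alpha$ by its closure $\overline{B_\alpha}^{X}$: the closure can absorb an infinite-dimensional chunk of $F$ (already a $0$-dimensional dense subset of the Hilbert cube has infinite-dimensional closure), so $\overline{B_\alpha}^{X}$ need not be finite-dimensional. The way through is to exploit two features of Definition~\ref{def:d} that I have not yet used: a $D$-representation is a \emph{cover} whose members may overlap, and only its \emph{tails} are required to be closed. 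Thus every boundary accumulation point that $B_\alpha$ throws into $F$ may be charged to the ($F$-indexed) pieces lying above level $\mu$, which already contain all of $F$; concretely one chooses the low pieces inside $G_\alpha\cup F$ so that their union still recovers $G_\alpha$ modulo $F$, and invokes the Locally Finite Sum Theorems (Theorem~\ref{THM:sumInd} and Theorem~\ref{thrm:dlocalfin}) together with Corollary~\ref{cor:dmax} to keep each piece finite-dimensional and closed. It is cleanest to first reduce to the dense case $F=\partial G$ via Corollary~\ref{cor:dmax}, writing $X=\overline{G}^{X}\cup F$ so that $D(X)=\max\{D(\overline{G}^{X}),D(F)\}$, and then to run the construction level by level up the tower as a transfinite induction on $\mu$.

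Granting the construction, clauses \ref{D:1}--\ref{D:4} hold for a representation of $X$ of type $\beta=\mu+\max\{m,D(F)\}$, so $D(X)\le\beta=\lambda(D(G))+\max\{n(D(G)),D(F)\}$, which is the first inequality. The second inequality is pure ordinal arithmetic: since $\max\{a,b\}\le a+b$ for ordinals $a,b\ge0$, we get $\max\{m,D(F)\}\le m+D(F)$, whence $\beta\le\mu+m+D(F)=D(G)+D(F)$, using $D(G)=\lambda(D(G))+n(D(G))=\mu+m$. The hard part, and the only place where real topology rather than bookkeeping enters, is the finite-dimensionality and closedness of the low pieces discussed above.
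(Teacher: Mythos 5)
The paper never proves this statement; it is imported verbatim from Henderson (Theorem 4 of \cite{hendersonDimensionNewTransfinite1968}), so your argument must stand on its own, and it does not: the step you yourself call the ``load-bearing'' one --- producing pieces below level $\mu$ that are closed in $X$ \emph{and} finite-dimensional --- is never carried out, and the remedy you sketch cannot work as stated. What is correct: the degenerate cases, the closedness of the tails $G_\delta\cup F$, the plan of stacking the $F$-representation above level $\mu$, and the closing ordinal arithmetic for the second inequality. The gap is criterion \ref{D:1}. Your ``way through'' rests on the assertion that ``only its tails are required to be closed,'' but that is false: criterion \ref{D:1} of Definition \ref{def:d} requires every individual piece $A_\alpha$ to be closed in $X$, which is exactly why you flagged the difficulty in the first place. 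Charging the points of $\overline{B_\alpha}^{X}\cap F$ to the $F$-pieces above level $\mu$ only takes care of the covering clauses \ref{D:2} and \ref{D:4}; it does nothing to make a non-closed low piece closed. Likewise Theorem \ref{THM:sumInd}, Theorem \ref{thrm:dlocalfin} and Corollary \ref{cor:dmax} estimate dimensions of unions of closed sets you already possess; they cannot manufacture the closed finite-dimensional pieces whose existence is the entire issue. The same objection hits your case $D(F)\le m$: merging $F$ into the top piece alone still leaves every piece below $\mu$ non-closed (there one must take $A_\alpha:=B_\alpha\cup F$ for \emph{all} $\alpha$, and justify $\Ind(B_\alpha\cup F)\le\max\{\Ind(B_\alpha),\Ind(F)\}$, e.g.\ via the countable sum theorem, since $B_\alpha$ is open, hence $F_\sigma$, in $B_\alpha\cup F$).

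To gauge how much is missing, here is roughly what a completed construction along your lines must contain when $F$ is infinite-dimensional. Decompose $G$ into annuli $R_n:=\{x\in X: \tfrac{1}{n}\le d(x,F)\le\tfrac{1}{n-1}\}$ (with $\tfrac{1}{0}:=\infty$): these are closed in $X$, contained in $G$, cover $G$, each point of $G$ lies in at most two of them, and they are locally finite in $G$. The usable low pieces are $B_\alpha\cap R_n$ (closed in $X$ because $R_n$ is a closed-in-$X$ subset of $G$ and $B_\alpha$ is closed in $G$), and one must re-index the family $\{(\alpha,n):\alpha<\mu,\ n<\om_0\}$ into $\mu$ by maps $\iota_n:\mu\to\mu$ that are strictly increasing in $\alpha$ with pairwise disjoint ranges, for instance $\iota_n(\lambda+j)=\lambda+2^{j}3^{n}$ on each block $[\lambda,\lambda+\om_0)$. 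The strict monotonicity in $\alpha$ is what makes every tail meet every annulus in a set of the form $\bigl(\bigcup_{\alpha\ge\delta'}B_\alpha\bigr)\cap R_n$, whence clause \ref{D:2}; the ``at most two annuli per point'' fact is what rescues clause \ref{D:4}, which genuinely fails if one instead truncates by the sets $\{x:d(x,F)\ge \tfrac{1}{n}\}$, since then each $x\in G$ lies in pieces of arbitrarily high placement and no largest index exists. Finally, the top piece $B_\mu$ needs separate treatment: monotonicity forbids placing its annuli below $\mu$, and dropping monotonicity can destroy \ref{D:2}; when $D(F)\ge\om_0$ one can merge $B_\mu\cap R_j$ into $C_j$ for $j<\om_0$, keeping $C_\nu$ on top and hence the type $\mu+D(F)$. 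None of this --- the annuli, the order-preserving pairing, the treatment of $B_\mu$, the split into the two cases --- appears in your proposal, and the concluding ``transfinite induction on $\mu$'' is announced without an inductive step. So the first inequality, which is the substance of the theorem, is not proved.
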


\section{Properties $D$-Dimension}

\begin{theorem}\label{thrm:d14}
If $X=X_* \cup X_0$ where $X_*\not = \emptyset$ is closed and $X_0$ is countable then $D(X)=D(X_*).$ 
\begin{proof}
Without loss of generality replace $X_0$ with $X_0\setminus X_*$ so that $X_* \cap X_0=\emptyset$. Further, if $X_0$ is finite it is closed and so $X = X_0\cup X_{*}$ is a representation of $X$ as the union of two closed sets. Thus, Corollary \ref{cor:dmax} applies and the proposition holds. So we may also assume without loss of generality that $X_0$ is countable.

Suppose now that $D(X)<\omega_0$, then $D(X)$ is finite. Since $X_*$ and the singleton sets $\{x\}\subset X_0$ for $x\in X_{0}$ form a locally finite family of closed subspaces, Theorem \ref{THM:sumInd} applies. Taken together with The Subspace Theorem for $\Ind$ (see \cite[Theorem 2.3.6]{engelkingDimensionTheory1978}).
\[
\Ind(X) \leq \max \{\mInd(X_*), 0\}=\mInd(X_*)\leq \Ind(X).
\] 
So, since $D(X) = \Ind(X)$ for spaces of finite $D$-dimension (see \cite[Theorem 1]{hendersonDimensionNewTransfinite1968})
\[
    D(X) = \Ind(X) = \Ind(X_*) = D(X_*)
\]
which completes the argument for the finite $D$-dimensional case.

Assume now that $D(X_*)=\alpha$ for $\alpha \geq \omega_0$, where $\lambda(\alpha)=\gamma$. We will show $D(X)=D(X_*)$. Let $X_*=\bigcup_{\beta \leq \gamma} F_{\beta}$ be an $\alpha\mh D$-representation of $X_*$. Enumerate $X_0$ by taking $a_{(\cdot)}:\om_0\to X_0$, and define $F'_n=F_n\cup \{a_n\}$ for each $n\in \om_0$. Let $F'_{\beta}=F_{\beta}$ for $\beta \geq \omega_0$ . Clearly $X=\bigcup_{\beta \leq \gamma} F'_{\beta}$. We argue this is an $\alpha \mh D$-representation of $X$: 

Recalling Criteria \ref{D:1} of $D$-Dimension, each $F'_{\beta}$ is the union of two closed sets for $\beta < \omega_0$, and $F_{\beta}' = F_{\beta}$ for $\beta \geq \omega_0$. Furthermore from Corollary \ref{cor:dmax}, we have that 
\[
D(F'_n)=\max\{D(F_n),D(\{a_n\})\}=\max\{D(F_n),0\}. \text{ for all } \beta\leq \om_0.
\]
Thus, $\{F'_{\beta}\}_{\beta\leq \gamma}$ is closed and finite dimensional for all $\beta\leq\gamma$ and so Criterion \ref{D:1} of $D$-dimension is satisfied.

By construction, for $0\leq \delta \leq \gamma$, either $\bigcup_{\delta\leq \beta\leq \gamma} F'_{\beta}$ contains no points of $\{a_n\}_{n\in\om_0}$ or all but finitely many.
If the union contains no points of the countable set then by definition of $\{F'_{\beta}\}_{\beta\leq \gamma}$, $\delta \geq \omega_0$ and the union is closed by Criterion \ref{D:2} of $D$-dimension for the representation of $X_*$. 
Suppose now that $\bigcup_{\delta\leq \beta\leq \gamma} F'_{\beta}$ contains all but finitely many points of $X_0$. Then $\delta=m\in \om_0$ and 
\[
\bigcup_{m\leq \beta\leq \gamma} F'_{\beta}=\bigcup_{m\leq \beta\leq \gamma} F_{\beta}\cup \{a_n\}_{n\geq m}.
\]
As $\bigcup_{m\leq \beta\leq \gamma} F_{\beta}$ is closed, it suffices to prove $\bigcup_{m\leq \beta\leq \gamma} F'_{\beta}$ contains the limit points of each subsequence of $\{a_n\}_{n\geq m}$. 

Suppose by way of contradiction that there exists an  $a'\notin \bigcup_{m\leq \beta \leq \gamma} F'_\beta$ such that $a_{n_i}\rightarrow a'$, where $\{a_{n_i}\}_{i\in\om_0}$ is a subsequence of $\{a_n\}_{n\geq m}$. Then $a'\notin \bigcup_{m\leq \beta \leq \gamma} F_\beta$. Now, since $\bigcup_{m\leq \beta \leq \gamma} F_\beta$ is closed in $X_*$, it is closed in $X$. So $U = (\bigcup_{m\leq \beta \leq \gamma} F_\beta)^c$ is open in X. Then all but finitely many of the terms in $\{a_{n_i}\}_{i\in\om_0}$ fall in $U$. Note that 
\[
a'\in U = \left(\bigcup_{m\leq \beta \leq \gamma} F_\beta\right)^c \subseteq \bigcup_{0\leq \beta < m} F_\beta \subseteq \bigcup_{0\leq \beta < m} F'_\beta,
\]
but then for all $i$ large enough such that $n_i\geq m$,  $a_{n_i}$ does not fall in $U$ since $F_{n_i}'$ is the first set in the representation of $X$ that contains $a_{n_i}$. Hence, we have a contradiction. \lightning 

Thus it must be the case that $\bigcup_{\delta\leq \beta\leq \gamma} F'_{\beta}$ is closed for each $\delta\leq \gamma$ and so Criterion \ref{D:2} holds for $\{F'_{\beta}\}_{\beta\leq \gamma}$.
Furthermore, Criteria \ref{D:3} and \ref{D:4} from Definition \ref{def:d} holds for $\{F'_{\beta}\}_{\beta\leq \gamma}$ by construction.
Thus $D(X)=D(X_*)$ for the case of $\alpha \geq \omega_0$. Thus $D(X)\leq D(X_*)$. 

Finally, by the Subspace Theorem for $D$-Dimension \cite[Theorem 2]{hendersonDimensionNewTransfinite1968}, $X\supset X_*$ implies $D(X)\geq D(X_*)$ and so we may conclude that $D(X)=D(X_*)$ for any metrizable space $X$ which can be written as the union of a nonempty closed set and a countable set.
\end{proof}
\end{theorem}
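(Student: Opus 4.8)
The plan is to establish the two inequalities $D(X_*)\le D(X)$ and $D(X)\le D(X_*)$ separately. The first is immediate: since $X_*\subseteq X$, the Subspace Theorem for $D$-dimension \cite[Theorem 2]{hendersonDimensionNewTransfinite1968} gives $D(X_*)\le D(X)$. All the content is therefore in the reverse bound $D(X)\le D(X_*)$, and the natural tool for it is the excision estimate of Theorem \ref{thrm:d4}, which has the virtue of treating the finite and transfinite ranges of $D(X_*)$ at one stroke.

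To exploit that estimate I would first dispose of the trivial case $X=X_*$ and otherwise record that $X\setminus X_*\subseteq X_0$ is a nonempty countable metrizable space. Such a space is separable and zero-dimensional: for each point the set of distances to the remaining points is countable, hence omits arbitrarily small radii, and the balls of those radii form a clopen neighbourhood base. Thus $\Ind(X\setminus X_*)=0$, and since $D$ agrees with $\Ind$ in the finite-dimensional range \cite[Theorem 1]{hendersonDimensionNewTransfinite1968} we get $D(X\setminus X_*)=0$. In particular $\lambda(D(X\setminus X_*))=0$ and $n(D(X\setminus X_*))=0$.

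Applying Theorem \ref{thrm:d4} with the closed set $F=X_*$ then collapses to
\[
D(X)\le \lambda(0)+\max\{n(0),D(X_*)\}=\max\{0,D(X_*)\}=D(X_*),
\]
where the last equality holds because $X_*\ne\emptyset$ forces $D(X_*)\ge 0$. Combined with the subspace bound this yields $D(X)=D(X_*)$. The step I expect to carry the weight is the input $D(X\setminus X_*)=0$; everything downstream is ordinal bookkeeping, so the whole argument stands or falls on the fact that a countable metrizable space has $D$-dimension $0$.

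A more hands-on alternative, which avoids Theorem \ref{thrm:d4} and builds a witnessing representation directly, is to take a $D(X_*)\mh D$ representation $X_*=\bigcup_{\beta\le\gamma}F_\beta$, enumerate $X_0=\{a_n\}_{n\in\om}$, and enlarge the finite levels by setting $F_n'=F_n\cup\{a_n\}$ while leaving the infinite levels $F_\beta'=F_\beta$ untouched; one then verifies the four clauses of Definition \ref{def:d} for $\{F_\beta'\}_{\beta\le\gamma}$. Clauses \ref{D:1}, \ref{D:3} and \ref{D:4} are routine (each new level is a union of two closed sets, so Corollary \ref{cor:dmax} keeps it finite-dimensional, and the top level is unchanged). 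In this route the genuine obstacle is clause \ref{D:2}, the closedness of every tail union $\bigcup_{\delta\le\beta\le\gamma}F_\beta'$: a tail either misses $X_0$ entirely, in which case it is closed by the representation of $X_*$, or contains all but finitely many $a_n$, and then one must rule out a subsequence of $X_0$ converging to a point outside the tail. This is exactly where the placement of $a_n$ at the first admissible level $F_n'$ is used, since a purported limit point in the complement would lie in $\bigcup_{\beta<\delta}F_\beta'$, forcing the converging terms into that open complement and contradicting convergence.
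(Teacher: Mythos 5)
Your primary argument is correct, and it takes a genuinely different route from the paper's. The paper's proof splits into two cases: when $D(X)<\om_0$ it combines the locally finite sum theorem for $\Ind$ (Theorem \ref{THM:sumInd}) with the coincidence of $D$ and $\Ind$ in the finite range, and when $D(X_*)\geq\om_0$ it builds an explicit $D(X_*)\mh D$ representation of $X$ from one of $X_*$ by adjoining $a_n$ to the $n$-th level, with all the work going into Criterion \ref{D:2} (closedness of the tails) --- exactly the construction you sketch as your alternative, including the correct identification of where the ``first admissible level'' placement is used. Your main proof instead obtains $D(X)\le D(X_*)$ uniformly: the load-bearing input is that a nonempty countable metric space has small inductive dimension $0$ (the countably many distances from any point omit arbitrarily small radii, giving a clopen base), hence $\Ind=0$, hence $D(X\setminus X_*)=0$ directly from Definition \ref{def:d}; then the excision estimate of Theorem \ref{thrm:d4} with $F=X_*$ gives $D(X)\le\lambda(0)+\max\{n(0),D(X_*)\}=D(X_*)$, and the Subspace Theorem \cite[Theorem 2]{hendersonDimensionNewTransfinite1968} closes the loop. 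Two remarks on what each approach buys. Your route is shorter, is in the same spirit as the paper's own proof of Theorem \ref{thrm:d18}, and avoids the finite/transfinite case split entirely; note that the direct construction (yours and the paper's) genuinely needs that split, since when $D(X_*)<\om_0$ the representation has only the single level $F_0$ and there is nowhere to put the points $a_n$ for $n\ge 1$. Conversely, the paper's hands-on construction is self-contained relative to Definition \ref{def:d}: it exhibits a concrete representation witnessing the inequality rather than delegating to Henderson's Theorem 4. One small point to tighten in your write-up: a clopen neighbourhood base gives small inductive dimension $0$, and passing from that to $\Ind=0$ uses the coincidence theorem for separable metrizable spaces, which deserves an explicit citation.
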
 

\begin{theorem}\label{thrm:d18}
If $X$ is a metrizable space and $X=X_0\cup \bigcup_{j\in J} X_j$, where $X_0$ is closed and $\{X_j\}_{j\in J}$ is a collection of closed sets which are locally finite on $X\setminus X_0$, then $$D(X)\leq D(X_0)+D\left(\bigcup_{j\in J} X_j\right ).$$ 

\begin{proof}
Since $X_0$ is closed, Theorem \ref{thrm:d4} applies in conjunction with the Subspace Theorem for $D$-dimension \cite[Theorem 2]{hendersonDimensionNewTransfinite1968}
\[ 
D(X)\leq D(X\setminus X_0)+ D(X_0)\leq D\left(\bigcup_{j\in J} X_j\right)+ D(X_0).
\]
\end{proof}
\end{theorem}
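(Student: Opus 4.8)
The plan is to deduce this from the single-excision estimate in Theorem~\ref{thrm:d4}, using $X_0$ itself as the excised closed set. Because $X_0$ is closed in $X$, Theorem~\ref{thrm:d4} applies directly with $F=X_0$ and gives
\[
D(X)\leq \lambda\bigl(D(X\setminus X_0)\bigr)+\max\bigl\{n\bigl(D(X\setminus X_0)\bigr),\,D(X_0)\bigr\}\leq D(X\setminus X_0)+D(X_0).
\]
This already isolates the only real content: bounding the dimension of the excised piece $X\setminus X_0$ by the dimension of the union $\bigcup_{j\in J}X_j$.

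For that reduction I would argue purely by containment and monotonicity. Since $X=X_0\cup\bigcup_{j\in J}X_j$, any point not in $X_0$ must lie in some $X_j$, so $X\setminus X_0\subseteq\bigcup_{j\in J}X_j$ as sets. Applying the Subspace Theorem for $D$-dimension (\cite[Theorem 2]{hendersonDimensionNewTransfinite1968}) to this inclusion yields $D(X\setminus X_0)\leq D\bigl(\bigcup_{j\in J}X_j\bigr)$, and substituting into the display above (using that $\alpha\mapsto\alpha+D(X_0)$ is monotone) produces the desired upper bound.

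I expect the genuine obstacle here to be ordinal bookkeeping rather than topology. Ordinal addition is not commutative, so one must be careful about which summand carries $D(X_0)$: Theorem~\ref{thrm:d4} naturally delivers it as the right-hand summand, $D\bigl(\bigcup_{j\in J}X_j\bigr)+D(X_0)$, and reconciling this with the order displayed in the statement is the step I would scrutinize most. I would also note that the local-finiteness hypothesis on $\{X_j\}_{j\in J}$ over $X\setminus X_0$ is not actually consumed by this chain of inequalities---only the set identity and the closedness of $X_0$ enter---so its role is presumably to make $D\bigl(\bigcup_{j\in J}X_j\bigr)$ tractable in the intended applications (e.g.\ through the Locally Finite Sum Theorem~\ref{thrm:dlocalfin}) rather than to establish the inequality itself.
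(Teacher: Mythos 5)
Your proposal is correct and essentially identical to the paper's own proof: both apply Theorem~\ref{thrm:d4} with $F=X_0$ and then use the Subspace Theorem for $D$-dimension on the inclusion $X\setminus X_0\subseteq\bigcup_{j\in J}X_j$. The two caveats you raise are also features of the paper's argument: the local-finiteness hypothesis is never used there either, and the paper likewise ends with the bound $D\bigl(\bigcup_{j\in J}X_j\bigr)+D(X_0)$, leaving unaddressed the mismatch with the stated order $D(X_0)+D\bigl(\bigcup_{j\in J}X_j\bigr)$ --- since ordinal addition is not commutative, that discrepancy is a (minor) flaw in the paper itself, not in your argument.
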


\begin{theorem}\label{thrm:dinter}
If $-1<D(X)<\Omega$, then for all $\alpha \leq\lambda(D(X))$ there exists a closed subset $M\subseteq X$ and $k\in\om_0$ such that $$D(M)=(\lambda(D(X)) -\alpha)+k.$$ 
\end{theorem}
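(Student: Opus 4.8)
The plan is to realize $M$ as a ``tail'' of a $D(X)\mh D$ representation and to pin down its $D$-dimension exactly at the level of limit ordinals. Write $\gamma=\lambda(D(X))$ and $n=n(D(X))$, and fix a $D(X)\mh D$ representation $X=\bigcup\{A_\beta:0\le\beta\le\gamma\}$, so $\Ind(A_\gamma)=n$ by Criterion \ref{D:3}. For a given $\alpha\le\gamma$ the candidate is
\[
M=M_\alpha:=\bigcup\{A_\beta:\alpha\le\beta\le\gamma\},
\]
which is closed by Criterion \ref{D:2}. Applying Lemma \ref{LEM:ords} with its ``$\alpha$'' equal to $D(X)$ and its ``$\beta$'' equal to $\alpha$ gives $\gamma-\alpha=\lambda(\gamma-\alpha)$, so $\gamma-\alpha$ is $0$ or a limit ordinal; hence it suffices to show $\lambda(D(M_\alpha))=\gamma-\alpha$, since then $D(M_\alpha)=\lambda(D(M_\alpha))+n(D(M_\alpha))=(\gamma-\alpha)+k$ with $k:=n(D(M_\alpha))\in\om_0$ (finite because $D(M_\alpha)\le D(X)<\Om$ by monotonicity). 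The degenerate case $\alpha=\gamma$ is handled directly by taking $M$ to be any nonempty $A_\beta$, which is closed and finite-dimensional, so $D(M)=\Ind(M)=(\gamma-\gamma)+\Ind(M)$; I therefore focus on $\alpha<\gamma$.

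For the upper bound $\lambda(D(M_\alpha))\le\gamma-\alpha$, I would reindex by $B_\xi:=A_{\alpha+\xi}$ for $0\le\xi\le\gamma-\alpha$. Since $\xi\mapsto\alpha+\xi$ is a strictly increasing bijection of $[0,\gamma-\alpha]$ onto $[\alpha,\gamma]$, we get $M_\alpha=\bigcup\{B_\xi:0\le\xi\le\gamma-\alpha\}$, and Criteria \ref{D:1}--\ref{D:4} for $\{B_\xi\}$ transfer directly from those for $\{A_\beta\}$: finiteness and closedness in \ref{D:1}, closedness of upper unions in \ref{D:2} via $\bigcup\{B_\xi:\delta\le\xi\}=\bigcup\{A_\beta:\alpha+\delta\le\beta\le\gamma\}$, and \ref{D:4} because every point of $M_\alpha$ has largest $A$-index $\ge\alpha$. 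With $\Ind(B_{\gamma-\alpha})=\Ind(A_\gamma)=n$ this is a $((\gamma-\alpha)+n)\mh D$ representation, so $D(M_\alpha)\le(\gamma-\alpha)+n$ and $\lambda(D(M_\alpha))\le\gamma-\alpha$.

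The substantive step is the lower bound $D(M_\alpha)\ge\gamma-\alpha$. I would first bound the complement $Y:=X\setminus M_\alpha$, which is open and consists exactly of the points whose largest $A$-index is $<\alpha$; thus the relatively closed sets $A'_\beta:=A_\beta\cap Y$ with $\beta<\alpha$ cover $Y$ and inherit Criteria \ref{D:2},\ref{D:4}. When $\alpha$ is a limit this yields an $\alpha\mh D$ representation (empty top set), while when $\alpha=\alpha'+1$ one folds the finite block $\bigcup\{A'_\beta:\lambda(\alpha')\le\beta\le\alpha'\}$ into a single top set whose $\Ind$ is finite by the finite case of Theorem \ref{THM:sumInd}; either way $D(Y)\le\alpha$, so $\lambda(D(Y))\le\alpha$ and $D(Y)<\gamma$ (this also forces $M_\alpha\ne\emptyset$, else $Y=X$ would give $D(X)<\gamma$). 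Applying Theorem \ref{thrm:d4} with $F=M_\alpha$ gives
\[
D(X)\le \lambda(D(Y))+\max\{\,n(D(Y)),\,D(M_\alpha)\,\}.
\]
Assuming for contradiction $D(M_\alpha)<\gamma-\alpha$, the right-hand side is at most $\alpha+D(M_\alpha)<\alpha+(\gamma-\alpha)=\gamma$ when the maximum is $D(M_\alpha)$ (using $\lambda(D(Y))\le\alpha$, monotonicity, and strict right-monotonicity of ordinal addition), and at most $\alpha+(\text{a finite ordinal})<\gamma$ otherwise (since $\gamma$ is a limit ordinal and $\alpha<\gamma$); in both cases this is $<\gamma\le\gamma+n=D(X)$, a contradiction. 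Hence $D(M_\alpha)\ge\gamma-\alpha$, and because $\gamma-\alpha$ is a limit ordinal this yields $\lambda(D(M_\alpha))\ge\gamma-\alpha$.

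Combining the two bounds gives $\lambda(D(M_\alpha))=\gamma-\alpha$, whence $D(M_\alpha)=(\gamma-\alpha)+k$ with $k=n(D(M_\alpha))\in\om_0$. I expect the genuine obstacle to be this lower bound, and specifically the control of $D(X\setminus M_\alpha)$: since $X\setminus M_\alpha$ is open rather than closed one cannot simply invoke a tail representation, and the bookkeeping needed to fold the finite part of a successor $\alpha$ into an admissible top set — so that the top index is a genuine limit ordinal as Definition \ref{def:d} requires — together with keeping the noncommutative ordinal arithmetic straight in the final inequality, is the most delicate point.
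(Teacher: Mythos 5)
Your proof is correct, and while its skeleton (the tail set $M_\alpha=\bigcup\{A_\beta:\alpha\le\beta\le\gamma\}$ and the reindexing argument for the upper bound $\lambda(D(M_\alpha))\le\gamma-\alpha$) coincides with the paper's, your lower bound goes by a genuinely different route. The paper argues by contradiction inside Definition \ref{def:d}: assuming $\lambda(D(M_\alpha))<\gamma-\alpha$, it takes a minimal $D$-representation of the tail, shifts its indices by $\alpha$, glues it onto the head $\{A_\mu\}_{\mu<\alpha}$ of the original representation, and verifies that the resulting family is a $(\kappa+\Ind(A''_\kappa))\mh D$ representation of $X$ with $\kappa=\alpha+\lambda(D(M_\alpha))<\gamma$, contradicting the minimality in the definition of $D(X)$. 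You instead control the open complement $Y=X\setminus M_\alpha$ --- building a representation of $Y$ out of the head restricted to $Y$, with an empty top set when $\alpha$ is a limit and a folded finite block (via Theorem \ref{THM:sumInd}) when $\alpha$ is a successor --- and then invoke the excision bound of Theorem \ref{thrm:d4} with $F=M_\alpha$ to reach the contradiction. The trade-off: the paper's gluing stays entirely within the definition of $D$ and only ever manipulates representations of $X$ itself, whereas your argument outsources the contradiction to Henderson's excision theorem at the cost of having to produce a representation of a non-closed subspace and to split into successor/limit cases for $\alpha$. One small imprecision: in the successor case $\alpha=\alpha'+1$ the folded top set can have $\Ind$ exceeding $n(\alpha)$, so the intermediate claim ``$D(Y)\le\alpha$'' is not quite what your construction yields; what it yields is $D(Y)\le\lambda(\alpha)+m$ for some $m\in\om_0$. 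This is harmless, because the only facts you use downstream --- $\lambda(D(Y))\le\alpha$, $n(D(Y))\in\om_0$, and $D(Y)<\gamma$ --- all follow from that weaker bound, using that $\gamma$ is a limit ordinal exceeding $\alpha$.
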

\begin{proof}
First we show that if $\alpha=\lambda(D(X))$ we can find a closed zero dimensional subset. This is easily achieved by taking $M$ to be a finite collection of points from $X$. Then $M$ is a $D$ representation of itself, as it satisfies all criteria listed in Definition \ref{def:d}. This subset would have $D$-dimension zero as $\Ind(M) = 0$. Thus, $k=0$ would suffice.

Now we consider $\alpha<\lambda(D(X))$. Let $\bigcup\{A_\beta : 0\leq \beta\leq \lambda(D(X))\}$ be a $D(X)\mh D$ representation of $X$. For any $\alpha$, define 
 \begin{equation}\label{EQN:arep}
 X_\alpha= \bigcup_{\alpha\leq\beta\leq\lambda(D(X))} A_\beta.
 \end{equation}
 
It follows from Criterion \ref{D:2} of Definition \ref{def:d} that $X_\alpha$ is closed for any $\alpha$. 
By re-indexing each term in the union in \eqref{EQN:arep} so that it begins at $0$ we obtain a $D$ representation of $X_\alpha$ with $\lambda(D(X_\alpha))\leq \lambda(D(X)) -\alpha$.

By way of contradiction suppose that $\lambda(D(X_\alpha))<\lambda(D(X)) -\alpha$. Let 
\begin{equation}\label{EQN:arep'}
X_\alpha=\displaystyle \bigcup_{0\leq\delta\leq\lambda(D(X_\alpha))}A^\prime_\delta
\end{equation}
be a $D(X_\alpha)\mh D$ representation of $X_\alpha$. For each $\alpha\leq \mu\leq\kappa
$ define $$A^{\prime\prime}_\mu \coloneqq A^\prime_{\mu-\alpha},\text{where } \kappa=\alpha+\lambda(D(X_\alpha)).$$
For $\mu<\alpha$, let $A^{\prime\prime}_\mu= A_\mu$. We show the following: 

\begin{claim*}\notag $\displaystyle \bigcup_{\mu\leq\kappa}A^{\prime\prime}_\mu$ is a $(\kappa +\Ind(A^{\prime\prime}_\kappa))\mh D$-representation of $X$.
\end{claim*}
\begin{proof}\renewcommand\qedsymbol{$\blacksquare$}
We have that $\bigcup \{A^{\prime\prime}_\mu : \alpha\leq\mu\leq\kappa\}=X_\alpha$. So, it is clear from \eqref{EQN:arep} that $\displaystyle \bigcup_{\mu\leq\kappa}A^{\prime\prime}_\mu=X$. By construction, Criteria \ref{D:1} and \ref{D:3} of Definition \ref{def:d} are satisfied.
Also, for $\eta<\alpha$ we have 
\begin{align}
\displaystyle \bigcup_{\eta\leq\mu\leq\kappa}A^{\prime\prime}_\mu&=\displaystyle \bigcup_{\eta\leq\mu<\alpha}A^{\prime\prime}_\mu \cup \displaystyle \bigcup_{\alpha\leq\mu\leq\kappa}A^{\prime\prime}_\mu\notag\\&= \bigcup_{\eta\leq\mu<\alpha}A_\mu \cup X_\alpha\label{EQN:closedthing}\\ &= \displaystyle \bigcup_{\eta\leq\mu<\alpha}A_\mu \cup \displaystyle \bigcup_{\alpha\leq\mu\leq\lambda(D(X))}A_\mu\notag\\ &= \bigcup_{\eta\leq\mu\leq\lambda(D(X))}A_\mu. \notag
\end{align}
Thus, by application of criterion \ref{D:2} noting that \eqref{EQN:closedthing} is a tail of the $D(X)$-$D$ representation of $X$, for all $\eta$ the set $\bigcup_{\eta\leq\mu\leq\kappa}A^{\prime\prime}_\mu$ is closed and thus the collection of $A^{\prime\prime}_\mu$ satisfies Criterion \ref{D:2}. Both \eqref{EQN:arep} and \eqref{EQN:arep'} are $D$ representations of $X_{\alpha}$ so for each $x\in X_{\alpha}$ we may choose the maximum of the indices from \eqref{EQN:arep} and \eqref{EQN:arep'} granted by Criterion \ref{D:4} to find the index of the $A^{\prime\prime}_\mu$ collection that satisfies Criteria \ref{D:4}, for those $x\not\in X_{\alpha}$ the choice is immediate by construction.
\end{proof}
But, by assumption,
\[
\kappa=\alpha + \lambda(D(X_\alpha))< \alpha +\lambda(D(X))-\alpha\leq\lambda(D(X)).
\]
And so we have a contradiction to the definition of $D$-dimension as $D(X)$ is the least such $\beta$ and we have found an ordinal $\kappa+\Ind(A''_k)<\beta$ which satisfies the definition of $D$-dimension.\lightning

Then by above we have 
\[
\lambda(D(X))-\alpha = \lambda(D(X_{\alpha})) \leq D(X_\alpha)\leq (\lambda(D(X))-\alpha)+n(D(X)).
\]

Hence we can find some $k$ which makes equality hold.
\end{proof}

\section{The $D$-Variant of Transfinite Hausdorff Dimension}

Let $\mathcal{M}$ be the category of all metric spaces, and $\mathcal{M}_0$ be the category of all separable metric spaces. Let $\mathcal{P}_m(X)$ denote the collection of all metric subspaces of $X$. For $X,Y\in \mathcal{M}$, denote by $\mathcal{L} (X,Y)$ the collection of all Lipschitz maps $f: E\rightarrow Y$, where $E\in \mathcal{P}_m(X)$. Set
\begin{align*} 
\mathcal{L}(X)= \displaystyle \bigcup_{Y\in \mathcal{M}} \mathcal{L}(X,Y)        &   \hspace{2cm}      \mathcal{L}_0(X)= \displaystyle \bigcup_{Y\in \mathcal{M}_0} \mathcal{L}(X,Y).
\end{align*}

Let $\mathcal{L}^s(X)$ and $\mathcal{L}^s_0(X)$ denote the subcollection of surjective maps from their respective sets. We will denote the Lipschitz constant of each map, $f$, by Lip$(f)$.

We now introduce the $D$-variant of transfinite Hausdorff dimension. This definition follows a similar construction as Urba\'nski in \cite{urbanskiTransfiniteHausdorffDimension2009} but uses $D$-dimension (Definition \ref{def:d}) in place of the small transfinite inductive dimension. It can be observed throughout this section that the two transfinite Hausdorff dimension functions have differing properties. It is our hope that in the future a more direct relationship will be established between the two, e.g. one is always less than or equal to the other.

\begin{definition}\label{def:tDHD}
The \textit{$D$-variant of transfinite Hausdorff dimension}, denoted $\tDHD(X)$, of a metric space $X$ is equal to $-1$ if and only if $X=\emptyset$, and is less than or equal to $\alpha\in\ord$ if and only if $D(\mIm(f))\leq\alpha$ for every map $f\in\mathcal{L}(X)$. Then we define our $D$-variant of transfinite Hausdorff dimension of the space $X$ by setting
\[ \tDHD(X) \coloneqq \sup\{D(\mIm (f)): f\in\mathcal{L}(X)\} \geq D(X).\]
If no supremum exists, we set $\tDHD(X)=\Omega$. Thus in either case, $$\tDHD(X)= \sup\{D(\mIm (f)): f\in\mathcal{L}(X)\} \geq D(X).$$
\end{definition}

Directly from the definition we get the following.
\begin{theorem}[The Subspace Theorem]\label{thrm:tdhdsub}
If $X\in \mathcal{M}$ and $E\in\mathcal{P}_m$, then 
\[
\tDHD(E) \leq \tDHD(X).
\]
\end{theorem}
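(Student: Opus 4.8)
The plan is to reduce the entire statement to a single containment of map collections together with the elementary fact that a supremum taken over a subcollection cannot exceed the supremum taken over the whole. Concretely, I would first argue that $\mathcal{L}(E) \subseteq \mathcal{L}(X)$ whenever $E \in \mathcal{P}_m(X)$, and then read off the inequality directly from Definition \ref{def:tDHD}.

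To establish the containment, the key observation is that the metric subspaces of $E$ are themselves metric subspaces of $X$, i.e.\ $\mathcal{P}_m(E) \subseteq \mathcal{P}_m(X)$, because $E$ carries the metric it inherits from $X$, and the restriction of $d_X$ to a subset $E' \subseteq E$ agrees with the restriction of $d_E$ to $E'$. Consequently a map $g \colon E' \to Y$ out of some $E' \in \mathcal{P}_m(E)$ is Lipschitz with respect to $d_E|_{E'}$ precisely when it is Lipschitz with respect to $d_X|_{E'}$, and its Lipschitz constant is unchanged. Since the target $Y$ ranges over the same class $\mathcal{M}$ in both definitions, every $g \in \mathcal{L}(E,Y)$ is also a member of $\mathcal{L}(X,Y)$, whence $\mathcal{L}(E) \subseteq \mathcal{L}(X)$. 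In particular the image $\mIm(g)$, and hence the value $D(\mIm(g))$, is exactly the same object whether $g$ is regarded as an element of $\mathcal{L}(E)$ or of $\mathcal{L}(X)$.

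With the containment in hand, the collection $\{D(\mIm(g)) : g \in \mathcal{L}(E)\}$ is contained in $\{D(\mIm(f)) : f \in \mathcal{L}(X)\}$, so any upper bound for the latter also bounds the former; unwinding this through Definition \ref{def:tDHD} yields $\tDHD(E) \leq \tDHD(X)$. The only points demanding care are the two distinguished symbols: if $E=\emptyset$ then $\tDHD(E)=-1\leq\tDHD(X)$ trivially, and if $\tDHD(X)=\Om$ the inequality is automatic since $\Om$ dominates every value; in the remaining case, where $\tDHD(X)$ is a genuine ordinal $\alpha$, boundedness of the larger value collection by $\alpha$ forces the smaller collection to be bounded by $\alpha$ as well, so that $\tDHD(E)$ is an ordinal not exceeding $\alpha$. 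I expect this bookkeeping around the $-1$ and $\Om$ symbols (rather than any geometric content) to be the only real obstacle, since the substantive step---the inclusion of the map collections---is immediate once the compatibility of the subspace metrics is noted.
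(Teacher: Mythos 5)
Your proof is correct and is precisely the argument the paper intends: the paper offers no written proof beyond the remark that the theorem follows ``directly from the definition,'' and the content of that remark is exactly your observation that $\mathcal{P}_m(E)\subseteq\mathcal{P}_m(X)$ forces $\mathcal{L}(E)\subseteq\mathcal{L}(X)$, so the supremum defining $\tDHD(E)$ is taken over a subcollection of the one defining $\tDHD(X)$. Your additional bookkeeping for the symbols $-1$ and $\Om$ is sound and fills in details the paper leaves implicit.
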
 

Since the composition of two Lipschitz maps is again Lipschitz continuous, we get the following.
\begin{theorem}
If $X$ and $Y$ are two metric spaces and $f: X\rightarrow Y$ is a Lipschitz map, then \[
\tDHD(f(X))\leq \tDHD(X).
\]
So, if $f: X\rightarrow Y$ is bi-Lipschitz continuous, then $$\tDHD(Y)= \tDHD(X).$$
\end{theorem}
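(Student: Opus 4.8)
The plan is to read off the inequality directly from the defining supremum
\[ \tDHD(\cdot)=\sup\{D(\mIm(g)):g\in\mathcal{L}(\cdot)\} \]
together with the fact that a composition of Lipschitz maps is Lipschitz. The key observation is that every Lipschitz map out of a subspace of $f(X)$ can be pulled back through $f$ to a Lipschitz map out of a subspace of $X$ realizing \emph{exactly the same image}. Thus the collection of images witnessing $\tDHD(f(X))$ is, up to equality of images, a subcollection of the one witnessing $\tDHD(X)$, and the inequality is immediate.

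Concretely, first I would fix an arbitrary $g\in\mathcal{L}(f(X))$, say $g:E'\to Z$ with $E'\in\mathcal{P}_m(f(X))$ and $Z\in\mathcal{M}$. Setting $E:=f^{-1}(E')\subseteq X$, the restriction $f|_E:E\to E'$ is Lipschitz with $\mathrm{Lip}(f|_E)\leq\mathrm{Lip}(f)$, and because $E'\subseteq f(X)$ we have $f(E)=E'$, i.e.\ $f|_E$ is onto $E'$. Hence the composite $h:=g\circ f|_E$ is a Lipschitz map $E\to Z$ with $\mathrm{Lip}(h)\leq\mathrm{Lip}(g)\,\mathrm{Lip}(f)$, so $h\in\mathcal{L}(X)$, and it satisfies $\mIm(h)=g(f(E))=g(E')=\mIm(g)$. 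Therefore $D(\mIm(g))=D(\mIm(h))$ belongs to the collection defining $\tDHD(X)$, whence $D(\mIm(g))\leq\tDHD(X)$. Taking the supremum over all such $g$ gives $\tDHD(f(X))\leq\tDHD(X)$, with the understanding that if $\tDHD(X)=\Omega$ the bound is trivial, and otherwise the left-hand supremum is bounded above by the ordinal $\tDHD(X)$ and so exists.

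For the bi-Lipschitz statement I would apply this inequality twice. Applying it to $f$ itself gives $\tDHD(f(X))\leq\tDHD(X)$, and since a bi-Lipschitz equivalence $f:X\to Y$ is onto we have $f(X)=Y$, so $\tDHD(Y)\leq\tDHD(X)$. The inverse $f^{-1}:Y\to X$ is also Lipschitz (and bijective with $f^{-1}(Y)=X$), so the same inequality applied to $f^{-1}$ yields $\tDHD(X)=\tDHD\bigl(f^{-1}(Y)\bigr)\leq\tDHD(Y)$. Combining the two inequalities gives the desired equality $\tDHD(Y)=\tDHD(X)$.

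The argument is essentially bookkeeping, so there is no deep obstacle. The one point genuinely requiring care is the identity $f(E)=E'$ for $E=f^{-1}(E')$: in general $f(f^{-1}(E'))=E'\cap f(X)$, so this equality uses crucially that $E'\subseteq f(X)$ (every point of $E'$ is actually attained by $f$) rather than merely $E'\subseteq Y$; this is exactly why the statement concerns $f(X)$ in the first part and requires surjectivity for the passage to $Y$. A secondary bookkeeping point is the symbol $\Omega$: since the pullback shows the image-collection for $f(X)$ sits inside that for $X$, unboundedness transfers correctly and every inequality remains valid once $\Omega$ is read as a maximal element.
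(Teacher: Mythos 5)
Your proof is correct and follows essentially the same route as the paper, which derives this theorem in one line from the observation that the composition of two Lipschitz maps is Lipschitz; your pullback $h = g\circ f|_{f^{-1}(E')}$ is exactly the detailed form of that composition argument, and your handling of surjectivity, the inverse map, and the symbol $\Omega$ fills in the bookkeeping the paper leaves implicit.
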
 

As the image of a separable metric space under a continuous map from a metric space to a metric space is a separable metric space, the next theorem is immediate.
\begin{theorem}
If $X\in\mathcal{M}_0$, then
$$\tDHD(X)=\sup\{D(\dIm(f))): f\in\mathcal{L}_0(X)\}\geq D(X).$$
\end{theorem}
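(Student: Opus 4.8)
The plan is to prove the displayed equality by two inclusions of the supremum-ranges, and then to verify the lower bound $\geq D(X)$ by exhibiting a single map. Throughout I rely on the remark preceding the statement, namely that a continuous image of a separable metric space is again separable.

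First I would record the easy inequality. Since $\mathcal{M}_0 \subseteq \mathcal{M}$, the definitions give $\mathcal{L}_0(X) \subseteq \mathcal{L}(X)$ at once, and a supremum taken over a subset cannot exceed the supremum over the whole set; hence
\[
\sup\{D(\mIm(f)) : f \in \mathcal{L}_0(X)\} \;\leq\; \sup\{D(\mIm(f)) : f \in \mathcal{L}(X)\} = \tDHD(X).
\]

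For the reverse inequality I would take an arbitrary $f \in \mathcal{L}(X)$, say a Lipschitz map $f \colon E \to Y$ with $E \in \mathcal{P}_m(X)$ and $Y \in \mathcal{M}$, and replace its codomain by its image. Because $X$ is separable and separability is hereditary for metric spaces, the subspace $E$ is separable; consequently $\mIm(f) = f(E)$, being a continuous image of a separable metric space, is itself separable (explicitly, the image of a countable dense subset of $E$ is countable and dense in $f(E)$). The corestriction $\tilde f \colon E \to \mIm(f)$, $\tilde f(x) = f(x)$, has the same Lipschitz constant as $f$, since $\mIm(f)$ carries the subspace metric inherited from $Y$, and its codomain now lies in $\mathcal{M}_0$; thus $\tilde f \in \mathcal{L}_0(X)$ with $\mIm(\tilde f) = \mIm(f)$. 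Therefore $D(\mIm(f)) = D(\mIm(\tilde f))$ is one of the values over which the right-hand supremum ranges, and taking the supremum over all $f \in \mathcal{L}(X)$ yields the reverse inequality. The two suprema coincide, which is the asserted identity.

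Finally, for the bound $\tDHD(X) \geq D(X)$ I would use the identity map: since $X \in \mathcal{P}_m(X)$ and $X$ is separable, $\id_X \colon X \to X$ is a $1$-Lipschitz map with separable codomain, so $\id_X \in \mathcal{L}_0(X)$ and $\mIm(\id_X) = X$; hence $D(X)$ appears among the values in the supremum, giving $\sup\{D(\mIm(f)) : f \in \mathcal{L}_0(X)\} \geq D(X)$. The whole argument is routine and presents no genuine obstacle; the only point that deserves a moment's attention is the corestriction step, where one must confirm that passing from $Y$ to $\mIm(f)$ as codomain changes neither the image nor the Lipschitz constant while landing the map in $\mathcal{L}_0(X)$ --- which is immediate from the subspace metric. (The degenerate case $X = \emptyset$ is trivial, both sides being $-1$.)
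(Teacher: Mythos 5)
Your proof is correct and is exactly the argument the paper has in mind: the paper dismisses this theorem as ``immediate'' from the observation that a continuous image of a separable metric space is separable, and your corestriction of each $f\in\mathcal{L}(X)$ to its (separable) image is precisely the fleshed-out version of that remark. No gaps; the hereditary-separability and identity-map steps are all routine and valid.
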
 

\begin{theorem}\label{thrm:tdhd4}
If $X\in\mathcal{M}$, then $$\tDHD(X)=\sup\{D(\dIm(f))\},$$where the supremum is taken over all maps $f\in\mathcal{L}(X)$, with closed domains or, equivalently, over all $f\in\mathcal{L}^s(X)$, with closed domains.
\begin{proof}
The argument is identical to Urbanski \cite[Theorem 2.6]{urbanskiTransfiniteHausdorffDimension2009} after replacing $\trind$ with $D$. We recall it here for completeness. Suppose $f: M \rightarrow Y$ is a Lipschitz map with $M\in \mathcal{P}_m(X)$. Let $\hat{Y}$ be the metric completion of $Y$. Since $f$ is Lipschitz continuous, it extends uniquely to a Lipschitz continuous map (with the same Lipschitz constant) $\hat{f}: \overline{M}\rightarrow\hat{Y}$. Then the map $\hat{f}: \overline{M}\rightarrow\hat{f}(\overline{M})$ belongs to $\mathcal{L}(X)$, $\overline{M}$ is a closed subspace of $X$, and $D(\hat{f}|_{\overline{M}}(\overline{M}))\geq D(f(M))$. Hence, we are done.
\end{proof}
\end{theorem}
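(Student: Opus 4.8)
The plan is to establish the stated equality by proving two inequalities between $\tDHD(X)=\sup\{D(\dIm(f)):f\in\mathcal{L}(X)\}$ and the supremum $S$ taken over only those maps in $\mathcal{L}(X)$ with closed domains. Since the collection of closed-domain maps is a sub-collection of $\mathcal{L}(X)$, the inequality $S\leq\tDHD(X)$ is immediate. The substance lies in the reverse inequality $\tDHD(X)\leq S$, for which I must show that every Lipschitz map from an arbitrary (possibly non-closed) subspace can be replaced by one with closed domain whose image has $D$-dimension at least as large.

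First I would fix an arbitrary $f\in\mathcal{L}(X,Y)$, say $f:M\to Y$ with $M\in\mathcal{P}_m(X)$, and pass to the metric completion $\hat Y$ of $Y$. Composing with the isometric embedding $Y\hookrightarrow\hat Y$ and invoking the standard fact that a Lipschitz map into a complete space extends uniquely to the closure of its domain with the same Lipschitz constant, I obtain $\hat f:\ol M\to\hat Y$ extending $f$, where $\ol M$ is the closure of $M$ in $X$ and is therefore a closed subspace of $X$. Restricting the codomain, $\hat f:\ol M\to\hat f(\ol M)$ is a surjective Lipschitz map with closed domain, so it lies in $\mathcal{L}^s(X)$ with closed domain.

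Next I would compare $D$-dimensions. Because $\hat f$ extends $f$ and $M\subseteq\ol M$, we have $\dIm(f)=f(M)=\hat f(M)\subseteq\hat f(\ol M)$, so by the Subspace Theorem for $D$-dimension $D(\dIm(f))\leq D(\hat f(\ol M))\leq S$. As $f$ was arbitrary, taking the supremum over all of $\mathcal{L}(X)$ yields $\tDHD(X)\leq S$, giving the desired equality. The equivalence with surjective maps then comes for free: any closed-domain map is already surjective onto its own image after restricting the codomain, an operation that alters neither the image nor its $D$-dimension, so the two suprema coincide.

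The main obstacle is the possible incompleteness of $Y$: one cannot extend $f$ directly to $\ol M$, since the natural extension takes limiting values that need not lie in $Y$. Passing to $\hat Y$ is precisely what repairs this, and it is harmless because $D$-dimension depends only on the image as a metric subspace, not on the ambient codomain. A secondary point to verify is that $\hat f(\ol M)$ genuinely contains the original image $f(M)$ so that monotonicity of $D$ applies; this is immediate from $M\subseteq\ol M$ together with the fact that $\hat f$ restricts to $f$ on $M$. The whole argument runs parallel to Urba\'nski's treatment of the $\trind$-based transfinite Hausdorff dimension, with $D$ substituted for $\trind$ throughout.
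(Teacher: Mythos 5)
Your proposal is correct and follows essentially the same route as the paper's proof: pass to the metric completion $\hat Y$, extend $f$ to a Lipschitz map $\hat f:\overline{M}\to\hat Y$ with the same Lipschitz constant, and conclude via monotonicity of $D$ that the closed-domain map $\hat f$ has image of $D$-dimension at least $D(f(M))$. Your write-up is in fact slightly more explicit than the paper's, since you record the trivial inequality $S\leq\tDHD(X)$, cite the Subspace Theorem for $D$-dimension to justify $D(f(M))\leq D(\hat f(\overline{M}))$, and note why the restriction to surjective maps changes nothing.
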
 

\begin{lemma}\label{LEM:Ord2}
If $\lambda(\tDHD(X))-\alpha = \om^{\delta}$ then $\lambda(\tDHD(X))-\beta<\omega^\gamma$ for all $\gamma>\delta$ and $\beta<\alpha$.
\end{lemma}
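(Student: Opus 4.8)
The plan is to translate the entire statement into ordinal addition and then exploit the additive indecomposability of the powers $\omega^\gamma$, which are precisely the members of $\HH$. Write $\Lambda := \lambda(\tDHD(X))$ for brevity; recall that $\Lambda$ is a limit ordinal (or $0$), and that for $\mu\le\nu$ the difference $\nu-\mu$ is by definition the unique $\rho$ with $\mu+\rho=\nu$. Thus the hypothesis $\Lambda-\alpha=\omega^\delta$ is literally the assertion $\alpha+\omega^\delta=\Lambda$, so in particular $\alpha\le\Lambda$ and $\beta<\alpha\le\Lambda$, which guarantees that every difference appearing in the statement is well defined. By Lemma \ref{LEM:ords} (applied with $\Lambda=\lambda(\Lambda)$) each such difference is moreover again a limit ordinal, which keeps the $\lambda(\cdot)$ bookkeeping consistent.

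First I would record the single algebraic identity that drives the proof. For $\beta<\alpha$ we may write $\alpha=\beta+(\alpha-\beta)$, and substituting into $\alpha+\omega^\delta=\Lambda$ gives, by associativity of ordinal addition,
\[
\beta+\left[(\alpha-\beta)+\omega^\delta\right]=\Lambda .
\]
By the uniqueness built into the definition of ordinal subtraction this yields $\Lambda-\beta=(\alpha-\beta)+\omega^\delta$, reducing the whole claim to a statement about this one sum.

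Next I would invoke additive indecomposability. Since $\gamma>\delta$ we have $\omega^\delta<\omega^\gamma$, and membership $\omega^\gamma\in\HH$ means exactly that whenever two ordinals are each strictly below $\omega^\gamma$, so is their sum. Hence it suffices to prove $\alpha-\beta<\omega^\gamma$; together with $\omega^\delta<\omega^\gamma$ this forces $(\alpha-\beta)+\omega^\delta<\omega^\gamma$, i.e.\ $\Lambda-\beta<\omega^\gamma$, as desired. Bounding $\alpha-\beta\le\alpha$ reduces this in turn to controlling $\alpha$ itself, and here I would use the hypothesis $\alpha+\omega^\delta=\Lambda$ together with the indecomposability of the remainder $\omega^\delta$ to constrain the Cantor normal form of $\Lambda$ and conclude $\alpha<\omega^{\delta+1}\le\omega^\gamma$ (the last inequality because $\gamma>\delta$ gives $\gamma\ge\delta+1$). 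The absorption rule $\mu+\omega^\delta=\omega^\delta$ for $\mu<\omega^\delta$ is the mechanism that makes this normal-form analysis go through.

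The main obstacle is precisely this last control of $\alpha$: ordinal subtraction is not well behaved on the left, so I cannot simply cancel, and I must instead argue from the additively indecomposable shape of $\Lambda-\alpha=\omega^\delta$ that $\alpha$ cannot carry a term of exponent exceeding $\delta$ — this is what pins $\alpha$ below $\omega^{\delta+1}$. Once the normal form of $\Lambda$ is understood, the conclusion drops out of the absorption property of $\omega^\gamma$, and everything else is routine manipulation of ordinal sums; I would therefore present the identity and the indecomposability step carefully and treat the normal-form bookkeeping for $\alpha$ as the crux of the argument.
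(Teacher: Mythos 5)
Your argument is sound, under the standard reading of ordinal subtraction that you adopt ($\nu-\mu$ is the unique $\rho$ with $\mu+\rho=\nu$), up to and including the reduction: writing $\Lambda=\lambda(\tDHD(X))$, indeed $\Lambda-\beta=(\alpha-\beta)+\om^{\delta}$, and by indecomposability of $\om^{\gamma}$ the lemma reduces to showing $\alpha-\beta<\om^{\gamma}$, hence (since $\alpha-0=\alpha$ and the binding case is $\gamma=\delta+1$) to showing $\alpha<\om^{\delta+1}$. The gap is exactly the step you flag as the crux, and it is not merely unproven but false: the hypothesis $\alpha+\om^{\delta}=\Lambda$ places \emph{no} constraint on the Cantor normal form of $\alpha$, because for an arbitrary $\alpha$ one may simply define $\Lambda:=\alpha+\om^{\delta}$; any term of $\alpha$ with exponent exceeding $\delta$ just becomes a leading term of $\Lambda$ and is invisible to the difference $\Lambda-\alpha$. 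Concretely, take any $\delta\geq 1$, $\alpha=\om^{\delta+1}$, $\Lambda=\om^{\delta+1}+\om^{\delta}$ (a limit ordinal), $\beta=0$, $\gamma=\delta+1$: then $\Lambda-\alpha=\om^{\delta}$, yet $\Lambda-\beta=\Lambda>\om^{\gamma}$. Since neither your argument nor the paper's uses any property of $X$ beyond $\Lambda$ being a limit ordinal, this arithmetic counterexample shows your strategy cannot be completed; under this reading of the minus sign -- the same one the paper itself uses elsewhere, e.g.\ in the re-indexing argument of Theorem \ref{thrm:dinter} -- the lemma as stated is false.

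For comparison, the paper's proof never attempts your bound on $\alpha$: it argues by contradiction, reduces ``without loss of generality'' to the case $\lambda(\tDHD(X))-\beta=\om^{\gamma}$ exactly, writes both differences with the subtrahend added on the \emph{right} ($\lambda(\tDHD(X))=\om^{\gamma}+\beta=\om^{\delta}+\alpha$), cancels $\beta$ to get $\om^{\gamma}=\om^{\delta}+(\alpha-\beta)$, and declares $\om^{\gamma}$ additively decomposable. That route avoids your crux only at the cost of (i) an unjustified equality reduction (nothing forces $\Lambda-\beta$ to be a power of $\om$), (ii) a subtraction convention opposite to yours and to the rest of the paper, and (iii) right cancellation, which fails for ordinal addition ($1+\om=2+\om$). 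What is actually true, and is all that Lemma \ref{LEM:Ord3} and Theorem \ref{THM:CSE} need, is a consequence of the identity you derived: if $\beta<\alpha$ and $\Lambda-\beta\neq\Lambda-\alpha=\om^{\delta}$, then $\alpha-\beta\geq\om^{\delta}$ (otherwise absorption would give $\Lambda-\beta=\om^{\delta}$), so $\Lambda-\beta=(\alpha-\beta)+\om^{\delta}$ exhibits $\Lambda-\beta$ as a sum of two nonzero ordinals each strictly smaller than it, whence $\Lambda-\beta\notin\HH$. I recommend proving that statement in place of the quantified bound ``$<\om^{\gamma}$ for all $\gamma>\delta$,'' which cannot be salvaged.
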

\begin{proof} By way of contradiction assume that $\lambda(\tDHD(X))-\beta\geq \omega^\gamma$. Without loss of generality, assume that there is a $\beta$ such that $\lambda(\tDHD(X))-\beta=\omega^\gamma$ for a $\gamma>\delta$. Then $\lambda(\tDHD(X))=\omega^\gamma+\beta$ and $\lambda(\tDHD(X))=\omega^\delta+\alpha$. Then we have that
    $$\omega^\gamma+\beta=\omega^\delta+\alpha \implies \omega^\gamma=\omega^\delta+(\alpha-\beta).$$ Thus $\omega^\gamma $ is additively decomposible, a contradiction.\lightning
\end{proof}

\begin{lemma}\label{LEM:Ord3}
    Let $\beta<\alpha\in\ord$. If $\lambda(\tDHD(X))-\alpha\in \HH$ and $\lambda(\tDHD(X))-\beta\neq \lambda(\tDHD(X))-\alpha$ then $\lambda(\tDHD(X))-\beta\not\in\HH$ 
\end{lemma}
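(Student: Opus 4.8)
The plan is to reduce everything to a single algebraic identity for ordinal subtraction and then invoke the defining closure property of additively indecomposable ordinals. Writing $L:=\lambda(\tDHD(X))$ and $\mu:=\alpha-\beta$ for brevity, the first thing I would establish is
\[
L-\beta=(\alpha-\beta)+(L-\alpha)=\mu+(L-\alpha),
\]
valid under the standing assumption $\beta<\alpha\le L$. This is routine: since $\beta+\mu=\alpha$ and $\alpha+(L-\alpha)=L$, associativity gives $L=\beta+\big(\mu+(L-\alpha)\big)$, so comparing with $L=\beta+(L-\beta)$ and cancelling $\beta$ on the left yields the claim. This is the only point where ordinal arithmetic needs care.

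Next I would record the hypothesis in the form $L-\alpha=\omega^{\delta}$ for some $\delta\in\ord$ and set $\gamma:=L-\beta=\mu+\omega^{\delta}$. Because $\beta<\alpha$ we have $\mu>0$, and because $\delta\in\ord$ we have $\omega^{\delta}\ge 1>0$; these two positivity facts are exactly what will force a nontrivial decomposition of $\gamma$. In particular $\gamma\ge 1$, so I may safely use the characterization that a positive ordinal $\eta$ lies in $\HH$ if and only if $\xi_1+\xi_2<\eta$ whenever $\xi_1,\xi_2<\eta$.

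The core step is then to exhibit $\gamma$ as a sum of two ordinals each strictly below it. Since left addition never decreases the right operand, $\gamma=\mu+\omega^{\delta}\ge\omega^{\delta}$, and the hypothesis $L-\beta\neq L-\alpha$ upgrades this to the strict inequality $\omega^{\delta}<\gamma$. On the other side, $\omega^{\delta}>0$ gives $\mu<\mu+\omega^{\delta}=\gamma$. Thus $\gamma=\mu+\omega^{\delta}$ with both summands strictly less than $\gamma$, which contradicts additive indecomposability; hence $\gamma=L-\beta\notin\HH$, as desired.

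The only genuine obstacle is bookkeeping. I must make sure each of the three subtractions $L-\alpha$, $L-\beta$, $\alpha-\beta$ is the well-defined left difference (guaranteed by $\beta<\alpha\le L$), and I must correctly use the absorption fact that $\mu+\omega^{\delta}=\omega^{\delta}$ holds precisely when $\mu<\omega^{\delta}$ in order to pass from $L-\beta\neq L-\alpha$ to the strict inequality $\omega^{\delta}<\gamma$. No appeal to Lemma \ref{LEM:Ord2} is required, although the same additive-indecomposability bookkeeping underlies both results.
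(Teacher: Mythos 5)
Your proof is correct, but it takes a genuinely different route from the paper's. The paper peels the indecomposable part off the \emph{left}: writing $L=\lambda(\tDHD(X))$, it produces $\xi$ with $\omega^\delta+\xi=L-\beta$ and then must ensure $\xi<L-\beta$; this is precisely where Lemma \ref{LEM:Ord2} enters (it gives $L-\beta<\omega^{\gamma}$ for $\gamma>\delta$, ruling out the absorption phenomenon $\omega^\delta+\xi=\xi$, which can occur once $\xi\geq\omega^{\delta+1}$), with Lemma \ref{LEM:ords} invoked to identify $L-\beta$ with its limit part. You instead put the indecomposable part on the \emph{right}: $L-\beta=(\alpha-\beta)+(L-\alpha)=\mu+\omega^\delta$, justified by left cancellation from $\beta+\bigl(\mu+(L-\alpha)\bigr)=\alpha+(L-\alpha)=L=\beta+(L-\beta)$. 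With this orientation both strict inequalities needed for decomposability are elementary: $\mu<\mu+\omega^\delta$ because $\omega^\delta>0$ (strict monotonicity of ordinal addition in the right argument), and $\omega^\delta<L-\beta$ because $L-\beta\geq\omega^\delta$ together with the hypothesis $L-\beta\neq L-\alpha$. So your argument is self-contained and dispenses with both auxiliary lemmas, which is a genuine simplification of this step; the paper's left-sided decomposition is what forces it to import Lemma \ref{LEM:Ord2}. One small inaccuracy in your closing remarks: the absorption fact ($\mu+\omega^\delta=\omega^\delta$ precisely when $\mu<\omega^\delta$) is not actually needed to pass to $\omega^\delta<L-\beta$ --- that follows directly from $L-\beta\geq\omega^\delta$ and $L-\beta\neq\omega^\delta$, exactly as you argue in your core step; the absorption fact only serves to explain when the hypothesis $L-\beta\neq L-\alpha$ can fail, so it is commentary rather than a load-bearing ingredient.
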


\begin{proof}
 Recall that in this setting, $\lambda(\tDHD(X))-\alpha=\omega^\delta$ for some $\delta\in\ord$. Consider a $\beta<\alpha$ such that $\lambda(\tDHD(X)) -\beta>\omega^\delta$. By Lemma \ref{LEM:Ord2} and Lemma \ref{LEM:ords} there is a $\xi\in\ord$ such that
\[
\xi<\lambda(\tDHD(X)) -\beta=\lambda(\lambda(\tDHD(X)) -\beta)
\]
where 
\[
\omega^\delta+\xi=\lambda(\lambda(\tDHD(X))-\beta).
\]
Thus,  $\lambda(\tDHD(X))-\beta\not\in\HH$.
\end{proof}

\begin{theorem}\label{THM:CSE}
If $\tDHD(X)<\Omega$, then for all $\alpha\leq \lambda (\tDHD(X))$ there exists a closed set $M$ and $k\in\om_0$ such that $$\tDHD(M)=(\lambda(\tDHD(X))-\alpha)+k.$$
\begin{proof}
Notice, if $\alpha=\lambda(\tDHD(X))$ we may choose $M=\{x\}$, for some $x\in X$, and $k=0$, so that $\tDHD(M)=0$. 
We will prove the rest of the cases by a careful transfinite induction on $\alpha$.
\begin{proof}[Base Case]\renewcommand\qedsymbol{\checkmark}
Notice if $\alpha=0$, then we may choose $M=X$ and the above equality holds.
\end{proof}

In fact, this is true for all finite $\alpha$. Thus, we proceed with the rest of the cases by only considering $\alpha\geq \om_0$. 

\begin{proof}[Successor Case]\renewcommand\qedsymbol{\checkmark} suppose the equality holds for $\alpha -1$ with $k_0\in \mathbb{N}$ and the closed set $M_0$. Take $M_0$ again and let $k=k_0+1$, then $$\tDHD(M_0)=(\lambda(\tDHD(X))-(\alpha-1))+k_0=(\lambda(\tDHD(X))-\alpha +k.$$ So the equality holds for $\alpha$.
\end{proof}

\begin{proof}[Limit Case]\renewcommand\qedsymbol{\checkmark}
We now suppose that for all $\beta < \alpha$ the statement holds. Note that if there is such a $\beta$ which satisfies $\lambda(\tDHD(X)) -\beta = \lambda(\tDHD(X))-\alpha$, we may choose the same subspace for $\alpha$ as the one that satisfies the condition for $\beta$. If this equality does not hold and $\lambda(\tDHD(X))-\alpha\in\HH$, then, by Lemma \ref{LEM:Ord3}, $\lambda(\tDHD(X))-\beta\not\in \HH$, and we continue in the following way which also captures the case where $\lambda(\tDHD(X))-\alpha\not\in \HH$:

Let $\beta<\alpha$ and $M_\beta$ be the corresponding closed subset such that 
\begin{equation}
\tDHD(M_\beta)=(\lambda(\tDHD(X))-\beta) +k_\beta.
\end{equation}
Since $\beta<\alpha$ with $\alpha$ a limit ordinal and $k_{\beta}\in\om_0$ we then have
\begin{equation}
\lambda(\tDHD(X))-\alpha \leq \tDHD(M_\beta).
\end{equation}
Also by Lemma \ref{LEM:ords},
\[
\lambda(\tDHD(M_\beta))>\lambda(\tDHD(X))-\alpha
\]
so, there is an ordinal number $\delta<\lambda(\tDHD(M_\beta))$ (and $\delta<\alpha$) such that 
\[
\lambda(\tDHD(M_\beta))= \lambda(\tDHD(X))-\beta =\lambda(\tDHD(X))-\alpha+\delta.
\]
By induction hypothesis on the space $M_{\beta}$ with $\delta$ there exists a corresponding closed set $F_{\delta}\subseteq M_{\beta}$ with an integer $k_{\delta}$ such that 
\[
    \tDHD(F_{\delta}) = \lambda(\tDHD(M_{\beta}))-\delta+k_{\beta} = \lambda(\tDHD(X))-\alpha+k
\]
Since $M_\beta$ is closed in $X$, $F_{\delta}$ is as well. This concludes the limit case
\end{proof}
Hence the statement holds.
\end{proof}
\end{theorem}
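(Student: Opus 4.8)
The plan is to treat this as the $\tDHD$-analogue of Theorem \ref{thrm:dinter} and run a transfinite induction on the parameter $\alpha$, but to prove the statement \emph{simultaneously for all} metric spaces $Y$ with $\tDHD(Y)<\Omega$. This uniformity is essential, because in the limit step I will apply the induction hypothesis not to $X$ itself but to a closed subspace of $X$ carrying strictly smaller data. Throughout write $\Lambda:=\lambda(\tDHD(X))$.

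I would first clear the endpoints and the finite range of $\alpha$. When $\alpha=\Lambda$ one has $\Lambda-\alpha=0$, so any singleton $M=\{x\}$ gives $\tDHD(M)=0$ with $k=0$. When $\alpha<\om_0$, the fact that $\Lambda$ is a limit ordinal forces $\Lambda-\alpha=\Lambda$, so $M=X$ together with $k=n(\tDHD(X))$ works, since $\tDHD(X)=\Lambda+n(\tDHD(X))$. Thus the induction proper only has to treat $\om_0\le\alpha\le\Lambda$. The successor step is then immediate: if $\alpha=\alpha'+1$ then $\alpha<\Lambda$ (as $\Lambda$ is a limit), so $\Lambda-\alpha$ is a nonzero limit ordinal that absorbs the leading $1$, giving $\Lambda-\alpha'=1+(\Lambda-\alpha)=\Lambda-\alpha$; the subspace furnished by the hypothesis for $\alpha'$ already realizes the required value, and I reuse it with the same $k$.

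The whole difficulty sits in the limit step, where $\alpha$ is a limit ordinal with $\om_0\le\alpha<\Lambda$. If some $\beta<\alpha$ happens to satisfy $\Lambda-\beta=\Lambda-\alpha$, the hypothesis subspace for $\beta$ already does the job. Otherwise $\Lambda-\beta>\Lambda-\alpha$ strictly for every $\beta<\alpha$, and the task is to hit the coarse level $\Lambda-\alpha$ \emph{exactly}, not merely to dominate it. My strategy is a two-stage descent: fix $\beta<\alpha$ and let $M_\beta\sub X$ be its hypothesis subspace, so $\tDHD(M_\beta)=(\Lambda-\beta)+k_\beta$ and hence $\lambda(\tDHD(M_\beta))=\Lambda-\beta>\Lambda-\alpha$. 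Since $\Lambda-\alpha$ sits strictly below $\lambda(\tDHD(M_\beta))$, I locate an ordinal $\delta<\alpha$ for which $\Lambda-\alpha$ is the corresponding tail of $\Lambda-\beta$, and then apply the uniform induction hypothesis to the smaller space $M_\beta$ at parameter $\delta$. This produces a closed $F_\delta\sub M_\beta$, hence closed in $X$, whose coarse level is $\lambda(\tDHD(M_\beta))-\delta=\Lambda-\alpha$; absorbing the residual finite offset into $k$ finishes the step.

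The main obstacle, and the point deserving the most care, is precisely the ordinal arithmetic guaranteeing that the descent lands on $\Lambda-\alpha$ on the nose: because ordinal addition is not commutative, one cannot cancel $\delta$ naively, and the existence of a usable $\delta<\alpha$ is not automatic. This is where Lemmas \ref{LEM:Ord2} and \ref{LEM:Ord3} do the real work. I would split on whether $\Lambda-\alpha$ is additively indecomposable: when $\Lambda-\alpha=\om^\sigma\in\HH$, Lemma \ref{LEM:Ord3} forces every $\Lambda-\beta$ (for $\beta<\alpha$ with $\Lambda-\beta\ne\Lambda-\alpha$) to be additively \emph{decomposable}, with leading exponent exactly $\sigma$ by Lemma \ref{LEM:Ord2}, which is exactly what pins down $\delta$ and validates the cancellation $\lambda(\tDHD(M_\beta))-\delta=\Lambda-\alpha$; the complementary case $\Lambda-\alpha\notin\HH$ is absorbed into the same construction. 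Together with the self-referential identity $\lambda(\mu)-\nu=\lambda(\lambda(\mu)-\nu)$ of Lemma \ref{LEM:ords}, which keeps every quantity $\Lambda-\beta$ a genuine limit ordinal, this closes the induction and establishes the coarse intermediate value property. I would verify the arithmetic by passing to Cantor normal form, as that makes the tail structure of $\Lambda-\beta$ transparent.
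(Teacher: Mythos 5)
Your proposal reproduces the paper's own strategy almost exactly: dispose of $\alpha=\lambda(\tDHD(X))$ and of finite $\alpha$ directly, handle successors by noting that $1+(\lambda(\tDHD(X))-\alpha)=\lambda(\tDHD(X))-\alpha$ (your version of this step is actually cleaner than the paper's, which mistakenly sets $k=k_0+1$ where $k=k_0$ is what the arithmetic gives), and in the limit case either reuse a subspace whose tail matches or descend through some $M_\beta$ by invoking a space-uniform induction hypothesis at a parameter $\delta<\alpha$. You also correctly isolate the crux: one needs $\delta<\alpha$ with $\delta+(\lambda(\tDHD(X))-\alpha)=\lambda(\tDHD(X))-\beta$, so that $\lambda(\tDHD(M_\beta))-\delta$ lands on $\lambda(\tDHD(X))-\alpha$ exactly. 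But at precisely this point you defer to Lemmas \ref{LEM:Ord2} and \ref{LEM:Ord3} ``pinning down $\delta$,'' and that is a genuine gap: such a $\delta<\alpha$ need not exist at all.

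Concretely, write $\Lambda=\lambda(\tDHD(X))$ and consider the configuration $\Lambda=\om^2+\om$, $\alpha=\om^2$ (here subtraction must mean the unique $\xi$ with $\beta+\xi=\Lambda$, the only convention under which $\Lambda-\beta$ is defined for every $\beta\le\Lambda$). Then $\Lambda-\alpha=\om$, while $\Lambda-\beta=\Lambda$ for every $\beta<\om^2$, so the reuse shortcut never applies; and any $\delta$ with $\delta+\om=\om^2+\om$ satisfies $\delta\ge\om^2=\alpha$, since $\delta<\om^2$ forces $\delta+\om<\om^2$. So the induction hypothesis can only be invoked at a parameter $\ge\alpha$: the descent is circular. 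Note that $\Lambda-\alpha=\om\in\HH$, so this is exactly the branch you route through Lemma \ref{LEM:Ord2}; but that lemma is itself false under this convention (same data: $\Lambda-0=\om^2+\om\not<\om^2$ although $\Lambda-\alpha=\om^1$), its proof in the paper having turned $\Lambda-\beta=\om^\gamma$ into $\Lambda=\om^\gamma+\beta$ instead of $\Lambda=\beta+\om^\gamma$. Passing to Cantor normal form, as you suggest, exposes rather than repairs the problem: the obstruction occurs whenever $\alpha$ equals the leading additively indecomposable term of $\Lambda$ and $\Lambda$ strictly exceeds that term. To be fair, this gap is inherited from the paper, whose limit case asserts the existence of the same $\delta$ (``and $\delta<\alpha$'') without justification and breaks on the same configuration; closing it would require either a genuinely different construction for such $\alpha$, or a proof that $\lambda(\tDHD(X))$ can never have this form, neither of which appears in your proposal or in the paper.
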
 
It should be noted that the limit case in the above proof shows the theorem has limitations, just like for $D$-dimension. However, we do have that if the $\tDHD$ of a metric space is less than $\omega^2$, then the result is true to the intention of the theorem, which is to be able to find subspaces at each limit ordinal less than or equal to the space's dimension.

Recall that $D$-dimension and Ind agree for finite dimensional spaces. The next theorem shows that like Urba\'nski's Transfinite Hausdorff Dimension, the $D$-variant of Transfinite Hausdorff Dimension gives a finer description of spaces of infinite Hausdorff Dimension. 
\begin{theorem}[c.f. Theorem 2.8 in \cite{urbanskiTransfiniteHausdorffDimension2009}]
If $X$ is a metric space and its Hausdorff dimension is finite, then 
\[
D(X) \leq \tDHD(X) \leq  \lfloor\HD(X)\rfloor
\]
Consequently, $\HD(X)=+\infty$ whenever $\tDHD(X)\geq \omega_0$.
\end{theorem}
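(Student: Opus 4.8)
The plan is to prove the displayed chain of inequalities one side at a time, the left inequality being essentially definitional and the right one resting on a Szpilrajn-type comparison between topological and Hausdorff dimension. The inequality $D(X) \leq \tDHD(X)$ is recorded directly in Definition \ref{def:tDHD}: since $\tDHD(X) = \sup\{D(\mIm(f)) : f \in \mathcal{L}(X)\}$ and the identity map on $X$ is a Lipschitz map in $\mathcal{L}(X)$ with image $X$, the value $D(X)$ already appears as one of the terms in the supremum. Hence the real content is the upper bound $\tDHD(X) \leq \lfloor \HD(X)\rfloor$, and because $\tDHD$ is defined as a supremum it suffices to establish $D(\mIm(f)) \leq \lfloor \HD(X)\rfloor$ for each individual $f \in \mathcal{L}(X)$. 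Fixing such an $f : E \to Y$ with $E \in \mathcal{P}_m(X)$, I would first push the finiteness hypothesis down to the image: monotonicity of Hausdorff dimension with respect to subspaces gives $\HD(E) \leq \HD(X)$, and since $\HD$ is non-increasing under Lipschitz maps,
\[
\HD(\mIm(f)) = \HD(f(E)) \leq \HD(E) \leq \HD(X) < \infty,
\]
so the image $\mIm(f)$ is a metric space of finite Hausdorff dimension.

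The key step is to convert this into a bound on $D(\mIm(f))$. Here I would invoke the classical inequality relating the Lebesgue covering dimension to the Hausdorff dimension, $\dim(\mIm(f)) \leq \HD(\mIm(f))$; since the left-hand side is an integer this upgrades to $\dim(\mIm(f)) \leq \lfloor \HD(\mIm(f))\rfloor \leq \lfloor \HD(X)\rfloor$, and in particular $\mIm(f)$ is finite-dimensional. For metrizable spaces the covering dimension agrees with the large inductive dimension by the Katětov--Morita theorem (as noted in the introduction), so $\Ind(\mIm(f)) = \dim(\mIm(f))$, and because $\mIm(f)$ is finite-dimensional Henderson's identity $D = \Ind$ on finite-dimensional metrizable spaces (\cite[Theorem 1]{hendersonDimensionNewTransfinite1968}) gives
\[
D(\mIm(f)) = \Ind(\mIm(f)) = \dim(\mIm(f)) \leq \lfloor \HD(X)\rfloor.
\]
Taking the supremum over all $f \in \mathcal{L}(X)$ then yields $\tDHD(X) \leq \lfloor\HD(X)\rfloor$, which completes the chain. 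The stated consequence follows by contraposition: if $\HD(X) < \infty$ then $\lfloor \HD(X)\rfloor$ is a finite integer, forcing $\tDHD(X) < \om_0$; equivalently, $\tDHD(X) \geq \om_0$ implies $\HD(X) = +\infty$.

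The step I expect to be the main obstacle is the careful invocation of the Szpilrajn inequality $\dim \leq \HD$. The classical statement is typically phrased for separable metric spaces, whereas here $X$, and therefore $E$ and $\mIm(f)$, need not be separable, and finite Hausdorff dimension does not by itself force separability. To close this gap I would either cite a version of the inequality valid for the covering dimension of an arbitrary metric space, or argue that bounding $D(\mIm(f))$ may be reduced to a separable situation. Everything else in the argument --- monotonicity and Lipschitz behaviour of $\HD$, and the identifications $D = \Ind = \dim$ in the finite-dimensional metrizable setting --- is either standard or already available in the excerpt.
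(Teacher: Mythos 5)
Your proof is correct and takes essentially the same route as the paper: what you unpack as Szpilrajn's inequality $\dim \leq \HD$, the Katetov--Morita identification $\dim = \Ind$ for metrizable spaces, and Henderson's $D = \Ind$ on finite-dimensional metrizable spaces is exactly what the paper compresses into its two invocations of Marczewski's Theorem, followed by the same supremum and the same contraposition for the final claim. The obstacle you flag at the end in fact dissolves: finite Hausdorff dimension \emph{does} force separability, since $\HD(Y) < s < \infty$ gives $\mathcal{H}^s(Y) = 0$, hence countable covers of $Y$ by sets of arbitrarily small diameter, hence a countable dense subset; so the classical separable form of the inequality applies directly to $\mIm(f)$, which is also the (unstated) reason the paper can cite Marczewski's Theorem without comment.
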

\begin{proof}
The argument is similar to that in \cite[Theorem 2.8]{urbanskiTransfiniteHausdorffDimension2009}. We have $D(X) \leq$ $\tDHD(X)$ by Definition \ref{def:tDHD}. Since $\mHD(X)<\infty$, it follows from Marczewski's Theorem (see \cite[Result of Theorem 2 (i)]{marczewskiszpilrajnDimensionMesure}) that $D(X)< \infty$ and, applying this theorem once more, we get for every $f\in \mathcal{L}(X)$ that $$\mHD(X)\geq \mHD(\text{Domain}(f)) \geq \mHD(\mIm(f))\geq D(\mIm(f)).$$ So, taking the supremum, we obtain that $\mHD(X)\geq \tDHD(X)$, and, as $\tDHD(X)$ is an integer, we are done.
\end{proof}

The next theorem proves that knowing the weight of the space provides an upperbound on its $D$-variant transfinite Hausdorff dimension.
\begin{theorem}\label{thrm:tdhdbase}
If a metric space $X$ has a topological base of cardinality $\leq \aleph_\alpha$ and \\$D(X)<\Omega$, then $\tDHD(X) \leq \omega_{\alpha+1}$. So, if $X$ is also separable, then $\tDHD(X)\leq \omega_1$.
\end{theorem}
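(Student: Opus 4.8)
The plan is to reduce the bound on $\tDHD(X)$ to a bound on the $D$-dimension of the individual images $\mIm(f)$, and then to bound $D$ for an arbitrary metric space of weight at most $\aleph_\alpha$. By Theorem \ref{thrm:tdhd4} it suffices to take the supremum over $f\in\mathcal{L}(X)$ with closed domain, so fix such an $f\colon E\to Y$ with $E$ a closed subspace of $X$. First I would observe that $\mIm(f)$ has weight at most $\aleph_\alpha$: a subspace of a metric space has weight at most that of the ambient space, so $w(E)\le w(X)\le\aleph_\alpha$, and since for metric spaces weight equals density there is a dense $D_0\subseteq E$ with $\scard(D_0)\le\aleph_\alpha$; as $f$ is continuous, $f(D_0)$ is dense in $\mIm(f)$, whence $\mIm(f)$ is a metric space of weight $\le\aleph_\alpha$.

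The core is the following lemma: if $Y$ is a metric space with weight $\le\aleph_\alpha$ and $D(Y)<\Omega$, then $D(Y)<\om_{\alpha+1}$. To prove it, fix a base $\mathcal{B}$ with $\scard(\mathcal{B})\le\aleph_\alpha$ and a $D(Y)\mh D$ representation $Y=\bigcup_{\delta\le\gamma}A_\delta$, and set $C_\delta=\bigcup_{\delta\le\mu\le\gamma}A_\mu$, which is closed and decreasing in $\delta$ by Criterion \ref{D:2}. Call $\delta$ \emph{active} when $C_\delta\supsetneq C_{\delta+1}$; by Criterion \ref{D:4} this happens exactly when some point of $Y$ has largest index $\delta$. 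For each active $\delta$ choose $x_\delta\in C_\delta\setminus C_{\delta+1}$ and, since $C_{\delta+1}$ is closed, a basic set $B_\delta\in\mathcal{B}$ with $x_\delta\in B_\delta\subseteq Y\setminus C_{\delta+1}$. If $\delta<\delta'$ are both active then $x_{\delta'}\in C_{\delta'}\subseteq C_{\delta+1}$, so $x_{\delta'}\notin B_\delta$ while $x_{\delta'}\in B_{\delta'}$; hence $\delta\mapsto B_\delta$ is injective, and so there are at most $\aleph_\alpha$ active indices.

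With the active indices bounded, I would delete every inactive $A_\delta$ (each such set satisfies $A_\delta\subseteq C_{\delta+1}=\bigcup_{\mu>\delta}A_\mu$, so the union is unchanged) and reindex the remaining sets by the order type of the active set, an ordinal of cardinality $\le\aleph_\alpha$ and therefore $<\om_{\alpha+1}$. One checks that each new tail coincides with some old tail $C_\delta$ and is thus closed, so Criteria \ref{D:1}, \ref{D:2} and \ref{D:4} persist; finitely many top pieces are merged (their union is closed and finite-dimensional by Corollary \ref{cor:dmax}) to restore the canonical form in which the top index is a limit. This produces a $D$-representation of $Y$ of length $<\om_{\alpha+1}$, so $D(Y)<\om_{\alpha+1}$. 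Taking the supremum over all $f$ then gives $\tDHD(X)\le\om_{\alpha+1}$, and in the separable case $\alpha=0$ yields $\tDHD(X)\le\om_1$.

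The main obstacle is not the cardinality count but guaranteeing that the lemma actually applies to every image, that is, that $D(\mIm(f))<\Omega$ for all $f\in\mathcal{L}(X)$: the supremum defining $\tDHD(X)$ is $<\Omega$ only if no image is $D$-undefined, and weight alone does not preclude $D=\Omega$ (a separable space such as the Hilbert cube has weight $\aleph_0$ yet $D=\Omega$). Thus the real crux is to propagate the hypothesis $D(X)<\Omega$ through Lipschitz maps of subspaces so as to ensure each $D(\mIm(f))<\Omega$; a secondary, purely technical point is the verification that the reindexed family continues to satisfy Criterion \ref{D:2} at limit stages after the inactive sets are removed.
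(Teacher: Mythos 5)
Your outline coincides with the paper's own (very terse) proof: the paper simply cites \cite[Theorem 10]{hendersonDimensionNewTransfinite1968} --- the statement that a metric space whose $D$-dimension is defined and whose weight is at most $\aleph_\alpha$ has $D$-dimension below $\omega_{\alpha+1}$ --- together with \cite[Theorem 4.1.15]{Eng89} (weight equals density in metric spaces, which is what makes every image $\mIm(f)$, $f\in\mathcal{L}(X)$, again a space of weight $\le\aleph_\alpha$), and then takes the supremum of Definition \ref{def:tDHD}. Your ``core lemma'' is exactly the Henderson ingredient, and your proof of it is correct: the injection $\delta\mapsto B_\delta$ into the base bounds the number of strict drops of the tails $C_\delta$; the inactive sets may be discarded because, by Criterion \ref{D:4}, every point lies in an active $A_\delta$; each tail of the reindexed family equals an old tail $C_\delta$ (again by Criterion \ref{D:4}), so Criterion \ref{D:2} survives; and the top-level bookkeeping via Corollary \ref{cor:dmax} (or an empty top set when the order type is a limit) is routine. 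So, modulo the issue you flag, you have reconstructed with proof precisely what the paper imports by citation, including the reduction to closed domains via Theorem \ref{thrm:tdhd4}, which the paper leaves implicit.

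The obstacle you identify at the end, however, is a genuine gap --- in your write-up and, equally, in the paper's --- and under the stated hypotheses it cannot be closed, because $D(X)<\Omega$ does not propagate to Lipschitz images of subspaces. Concretely, let $C=\{0,1\}^{\NN}$ with $d(x,y)=\sum_{i\ge 1}2^{-i}|x_i-y_i|$, and let $X=C^{\NN}$ with $\rho(x,y)=\sum_{j\ge 1}2^{-j}d(x_j,y_j)$. Then $X$ is a Cantor set, hence separable with $D(X)=\Ind(X)=0<\Omega$; but the coordinatewise binary-expansion map $G(x)_j=\sum_{i\ge 1}2^{-i}(x_j)_i$ is a $1$-Lipschitz surjection of $X$ onto the Hilbert cube $Q=[0,1]^{\NN}$ metrized by $\sum_{j\ge 1}2^{-j}|s_j-t_j|$. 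Now $D(Q)=\Omega$: your own active-index argument, run with a countable base, shows that any separable metric space with $D<\Omega$ is a union of countably many closed finite-dimensional subsets, and $Q$ is not such a union by Baire category (every closed finite-dimensional subset of $Q$ is nowhere dense). Hence $\tDHD(X)=\Omega$, so no argument can derive the stated conclusion from the stated hypotheses. What your proof (and the paper's) actually establishes is the corrected statement: if $X$ has a base of cardinality $\le\aleph_\alpha$, then either $\tDHD(X)=\Omega$ or $\tDHD(X)\le\omega_{\alpha+1}$; equivalently, the hypothesis ``$D(X)<\Omega$'' must be strengthened to ``$D(\mIm(f))<\Omega$ for every $f\in\mathcal{L}(X)$'', i.e.\ to ``$\tDHD(X)<\Omega$''. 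You should therefore regard the missing step not as a defect of your argument to be repaired, but as a flaw in the formulation of the theorem itself; your write-up becomes complete once the statement is amended in this way.
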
 
\begin{proof}
This follows from \cite[Theorem 10]{hendersonDimensionNewTransfinite1968} and \cite[Theorem 4.1.15]{Eng89} after taking a supremum as in Definition \ref{def:tDHD}.
\end{proof}

Just as $D$-dimension, we have the following tropical addition theorem.
\begin{theorem}
If $X$ is a compact metrizable space and $X=A\cup B$, where $A$ and $B$ are closed then $$\tDHD(X)= \max\{\tDHD(A), \tDHD(B)\}.$$
\begin{proof}
Let $M$ be a closed subspace of $X$ and $f:M\rightarrow Y$ where $f\in\mathcal{L}(X)$. Then $f(M\cap A)$ and $f(M\cap B)$ are closed subsets of $f(M)$ and by Corollary \ref{cor:dmax} we have that $$D(f(M))\leq \max\{D(f(M\cap A), D(f(M\cap B))\}\leq \max\{\tDHD(A), \tDHD(B)\}.$$ As $f$ and $M$ were arbitrary, Theorem \ref{thrm:tdhd4} implies that $$\tDHD(X)\leq \max\{\tDHD(A), \tDHD(B)\}.$$ Theorem \ref{thrm:tdhdsub} gives us the reverse inequality.  
\end{proof}
\end{theorem}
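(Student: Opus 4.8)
The plan is to establish the two inequalities separately. The inequality $\max\{\tDHD(A), \tDHD(B)\} \leq \tDHD(X)$ is immediate: since $A$ and $B$ are metric subspaces of $X$, the Subspace Theorem (Theorem \ref{thrm:tdhdsub}) gives $\tDHD(A) \leq \tDHD(X)$ and $\tDHD(B) \leq \tDHD(X)$, and taking the maximum yields the claim. This half needs neither compactness nor the closed-domain reformulation of $\tDHD$.

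For the harder inequality $\tDHD(X) \leq \max\{\tDHD(A), \tDHD(B)\}$, I would invoke Theorem \ref{thrm:tdhd4} so that it suffices to bound $D(\mIm(f))$ only for Lipschitz maps $f \colon M \to Y$ whose domain $M$ is a \emph{closed} subspace of $X$. Given such an $f$, decompose $M = (M \cap A) \cup (M \cap B)$ into two closed subsets and correspondingly write $f(M) = f(M \cap A) \cup f(M \cap B)$. The goal is to realize $f(M)$ as a union of two closed subsets, so that Corollary \ref{cor:dmax} applies and gives $D(f(M)) = \max\{D(f(M \cap A)), D(f(M \cap B))\}$. Since the restrictions satisfy $f|_{M \cap A} \in \mathcal{L}(A)$ and $f|_{M \cap B} \in \mathcal{L}(B)$ (their domains lie in $A$ and $B$ respectively), one then has $D(f(M \cap A)) \leq \tDHD(A)$ and $D(f(M \cap B)) \leq \tDHD(B)$ directly from Definition \ref{def:tDHD}, whence $D(f(M)) \leq \max\{\tDHD(A), \tDHD(B)\}$. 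Taking the supremum over all such $f$ and applying Theorem \ref{thrm:tdhd4} finishes the argument.

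The main obstacle -- and the reason compactness is hypothesized -- is precisely the step asserting that $f(M \cap A)$ and $f(M \cap B)$ are closed in $f(M)$. In general the Lipschitz image of a closed set need not be closed, so Corollary \ref{cor:dmax} could fail to apply. Here compactness resolves this: $M$ is closed in the compact space $X$, hence compact, so $M \cap A$ and $M \cap B$ are compact; their continuous images $f(M \cap A)$ and $f(M \cap B)$ are therefore compact, hence closed in the metric space $Y$ and a fortiori closed in $f(M)$. Once this closedness is secured, the decomposition argument above is routine, and the theorem follows by combining the two inequalities.
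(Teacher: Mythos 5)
Your proposal is correct and follows essentially the same route as the paper's proof: reduce to closed domains via Theorem \ref{thrm:tdhd4}, split $f(M)$ as $f(M\cap A)\cup f(M\cap B)$, apply Corollary \ref{cor:dmax}, and get the reverse inequality from Theorem \ref{thrm:tdhdsub}. In fact you are slightly more careful than the paper, which asserts without comment that $f(M\cap A)$ and $f(M\cap B)$ are closed in $f(M)$; your compactness argument is exactly the justification that step needs.
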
 

As a direct consequence we have the following.
\begin{theorem}\label{thrm:tdhd10}
If a compact metric space $X$ is a union of finitely many closed subspaces $X_1$, $X_2$,...,$X_n$, then $$\tDHD(X)=\max \{\tDHD(X_1), \tDHD(X_2),...,\tDHD(X_n)\}.$$
\end{theorem}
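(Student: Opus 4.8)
The plan is to induct on the number $n$ of closed subspaces, using the two-subspace case established in the preceding theorem as the engine of the induction. The base case $n=1$ is immediate, since then $X=X_1$ and the two sides of the claimed equality literally coincide, while the case $n=2$ is exactly the preceding theorem. So the content is entirely in the inductive step.

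For the inductive step, I would suppose the result holds for every compact metric space written as a union of $n-1$ closed subspaces, and then decompose $X=X_1\cup\cdots\cup X_n$ by grouping the first $n-1$ pieces. Set $A=X_1\cup\cdots\cup X_{n-1}$ and $B=X_n$. A finite union of closed sets is closed, so $A$ is closed in $X$; and a closed subset of a compact space is compact, so $A$ is itself a compact metric space, and of course $B=X_n$ is closed. The preceding (two-subspace) theorem applied to the decomposition $X=A\cup B$ then gives
\[
\tDHD(X)=\max\{\tDHD(A),\tDHD(B)\}.
\]

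To finish, I would note that each $X_i$ with $i<n$ is closed in $X$ and contained in $A$, hence closed in the subspace $A$; thus $A=X_1\cup\cdots\cup X_{n-1}$ presents the compact metric space $A$ as a union of $n-1$ closed subspaces, so the inductive hypothesis applies and yields $\tDHD(A)=\max\{\tDHD(X_1),\ldots,\tDHD(X_{n-1})\}$. Substituting this into the display above, together with $\tDHD(B)=\tDHD(X_n)$, collapses the nested maxima into
\[
\tDHD(X)=\max\{\tDHD(X_1),\ldots,\tDHD(X_n)\},
\]
as desired. I do not expect a genuine obstacle here; the only points needing care are the two routine topological verifications that permit the induction to close, namely that $A$ is again compact (being closed in the compact space $X$) and that each $X_i$ is closed as a subspace of $A$. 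Both are exactly what is required to legitimately reapply the two-subspace theorem and then the inductive hypothesis, which is why the statement is genuinely a direct consequence of the preceding one.
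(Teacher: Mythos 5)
Your proof is correct and matches the paper's intent exactly: the paper states this theorem as a direct consequence of the preceding two-subspace theorem, and your induction (grouping $X_1\cup\cdots\cup X_{n-1}$ and invoking compactness/closedness to reapply that theorem and the inductive hypothesis) is precisely the argument being left implicit. Nothing is missing.
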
 

\begin{theorem}[The Locally Finite Sum Theorem]
Let $X$ be a compact metric space. If $X$ can be represented as the union of a locally finite family of closed subsets, $\{X_j\}_{j\in J}$, then $$\tDHD(X)=\sup \{\tDHD(X_j):j\in J\}.$$
\begin{proof}
This follows from Theorem \ref{thrm:tdhd10} and as $X$ is compact we may consider a finite sub-family.
\end{proof}
\end{theorem}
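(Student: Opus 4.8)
The plan is to reduce the locally finite case to the finite case already established in Theorem \ref{thrm:tdhd10}. The crucial observation is that compactness forces a locally finite family of closed sets to be \emph{essentially finite}: only finitely many of the $X_j$ can be nonempty. First I would prove this reduction lemma. For each $x\in X$, local finiteness supplies an open neighbourhood $U_x$ meeting only finitely many members of $\{X_j\}_{j\in J}$. The collection $\{U_x\}_{x\in X}$ is an open cover of the compact space $X$, so it admits a finite subcover $X=U_{x_1}\cup\cdots\cup U_{x_m}$. Any member $X_j$ that is nonempty must meet at least one $U_{x_i}$, and each $U_{x_i}$ meets only finitely many members; hence at most finitely many $X_j$ are nonempty. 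This is the one genuinely topological step, and I expect it to be the main (though mild) obstacle, since everything else is bookkeeping.

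Having secured finiteness, I would handle the trivial degenerate case $X=\emptyset$ separately: then every $X_j=\emptyset$, so both sides equal $-1$ and there is nothing to prove. Assuming $X\nonempty$, let $X_{j_1},\dots,X_{j_n}$ enumerate the finitely many nonempty members, so that $X=X_{j_1}\cup\cdots\cup X_{j_n}$ is a finite union of closed subspaces. The remaining members are empty and carry $\tDHD=-1$, the least possible value, so they cannot influence the supremum; consequently
\[
\sup\{\tDHD(X_j):j\in J\}=\max\{\tDHD(X_{j_1}),\dots,\tDHD(X_{j_n})\}.
\]

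Finally I would apply Theorem \ref{thrm:tdhd10} directly to the finite closed cover $X=X_{j_1}\cup\cdots\cup X_{j_n}$, which gives
\[
\tDHD(X)=\max\{\tDHD(X_{j_1}),\dots,\tDHD(X_{j_n})\}.
\]
Combining the two displayed equalities yields $\tDHD(X)=\sup\{\tDHD(X_j):j\in J\}$, completing the argument. The only subtlety worth flagging is that $\tDHD$ of the empty set equals $-1$, which guarantees the empty members are harmless in the supremum; once this is noted, the proof is simply ``compactness collapses local finiteness to finiteness, then invoke the finite sum theorem.''
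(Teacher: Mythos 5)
Your proposal is correct and is essentially the paper's own argument: the paper's one-line proof also reduces the locally finite family to a finite sub-family via compactness and then invokes Theorem \ref{thrm:tdhd10}. You have simply spelled out the details the paper leaves implicit (the finite subcover argument showing only finitely many $X_j$ are nonempty, and the harmlessness of empty members with $\tDHD=-1$ in the supremum).
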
 

\begin{theorem}\label{thrm:tdhdloc}
If $X$ is a compact metric space and $X=X_0\cup \bigcup_{j\in J} X_j$, where $X_0$ is closed and $\{X_j\}_{j\in J}$ is a collection of closed sets which are locally finite on $X\setminus X_0$, then $$\tDHD(X)\leq \tDHD(X_0)+\tDHD\left(\bigcup_{j\in J} X_j\right).$$
\begin{proof}
Let $M$ be a closed subset of $X$ and $f:M\rightarrow Y$ be Lipschitz continuous. Then $M$ is compact, and as $f$ is continuous, $f(M)$ is also compact. As $f(M)$ is compact, it is closed. We will first show that $\{f(M\cap X_j)\}_{j\in J}$ is locally finite on $f(M)\setminus f(M\cap X_0).$ 

Suppose by way of contradiction that there exists $y\in f(M)\setminus f(M\cup X_0)$ such that the family $\{f(M\cap X_j)\}_{j\in J}$ is not locally finite at $y$. This means there exists an infinite countable subset $\{j_n\}_{n\in \mathbb{N}}$ of $J$, and for each $n\geq 1$ a point $x_n \in M\cap X_{j_n}$ such that $\lim_{n\rightarrow \infty} f(x_n)=y$. Since $M$ is compact, passing to a subsequence, we may assume without loss of generality that $\lim_{n\rightarrow \infty} x_n=x$ for some $x\in M$. But then, the family $\{X_j\}_{j\in J}$ is not locally finite at $x$. Hence $x\in X_0$ and thus $y=f(x)\in f(M\cap X_0)$, a contradiction.\lightning

Thus $\{f(M\cap X_j)\}_{j\in J}$ is locally finite on $f(M)\setminus f(M\cap X_0)$, and since 
\[
f(M)= f(M\cap X_0) \cup f\left(M\cap \bigcup_{j\in J} X_j\right)
\]
Theorem \ref{thrm:d18} applies and we have that
\begin{align*}
D(f(M)) &\leq D(f(M\cap X_0))+ D\left(f\left(M\cap \bigcup_{j\in J} X_j\right)\right)\\
&\leq \tDHD(M\cap X_0) + \tDHD\left(M\cap \bigcup_{j\in J} X_j\right)\\
&\leq \tDHD(X_0) + \tDHD\left(\bigcup_{j\in J} X_j\right)
\end{align*}
Then by Theorem \ref{thrm:tdhd4},
\begin{align*}
\tDHD(X) &\leq \tDHD(X_0) + \tDHD\left(\bigcup_{j\in J} X_j\right).        
\end{align*}
\end{proof}
\end{theorem}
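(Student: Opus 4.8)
The plan is to reduce the transfinite statement to the $D$-dimension sum theorem of Theorem~\ref{thrm:d18} by pushing the hypotheses forward through an arbitrary Lipschitz map, and then to close the argument using Theorem~\ref{thrm:tdhd4}. Concretely, by Theorem~\ref{thrm:tdhd4} it suffices to bound $D(f(M))$ uniformly over all Lipschitz maps $f\colon M\to Y$ whose domain $M$ is a closed subspace of $X$. Since $X$ is compact and $M$ is closed, $M$ is compact, so $f(M)$ is a compact, hence closed, subset of $Y$. I would then write $f(M)=f(M\cap X_0)\cup f\bigl(M\cap\bigcup_{j\in J}X_j\bigr)$ and aim to apply Theorem~\ref{thrm:d18} to this decomposition, with $f(M\cap X_0)$ playing the role of the closed set and $\{f(M\cap X_j)\}_{j\in J}$ playing the role of the family that is locally finite away from it.

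The main obstacle is verifying that $\{f(M\cap X_j)\}_{j\in J}$ is locally finite on $f(M)\setminus f(M\cap X_0)$; local finiteness is not in general preserved under continuous images, so this is precisely where compactness must be invoked. I would argue by contradiction: if the family failed to be locally finite at some $y\in f(M)\setminus f(M\cap X_0)$, then there would be a sequence of distinct indices $j_n$ and points $x_n\in M\cap X_{j_n}$ with $f(x_n)\to y$. Compactness of $M$ lets me pass to a subsequence with $x_n\to x\in M$, and continuity gives $f(x)=y$. But the accumulation of the $x_n$ in distinct members $X_{j_n}$ shows that $\{X_j\}_{j\in J}$ is not locally finite at $x$, which by hypothesis forces $x\in X_0$ and hence $y=f(x)\in f(M\cap X_0)$, contradicting the choice of $y$.

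With local finiteness in hand, Theorem~\ref{thrm:d18} yields $D(f(M))\leq D(f(M\cap X_0))+D\bigl(f(M\cap\bigcup_{j}X_j)\bigr)$. Each summand is then controlled by the subspace theorem: restricting $f$ to the closed subspaces $M\cap X_0$ and $M\cap\bigcup_j X_j$ realizes these images as $D$-dimensions of Lipschitz images of subspaces, giving $D(f(M\cap X_0))\leq\tDHD(M\cap X_0)\leq\tDHD(X_0)$ and $D\bigl(f(M\cap\bigcup_j X_j)\bigr)\leq\tDHD\bigl(\bigcup_j X_j\bigr)$, where the monotonicity in the subspace uses Theorem~\ref{thrm:tdhdsub}. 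Taking the supremum over all such $f$ and $M$ and applying Theorem~\ref{thrm:tdhd4} once more delivers $\tDHD(X)\leq\tDHD(X_0)+\tDHD\bigl(\bigcup_{j\in J}X_j\bigr)$. Beyond the local finiteness step the only care needed is that ordinal addition on the right respects each of the subspace inequalities, which holds because ordinal addition is monotone in each argument.
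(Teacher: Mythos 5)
Your proposal is correct and follows essentially the same route as the paper's own proof: the identical decomposition $f(M)=f(M\cap X_0)\cup f\bigl(M\cap\bigcup_{j\in J}X_j\bigr)$, the same compactness-and-subsequence contradiction argument for local finiteness of $\{f(M\cap X_j)\}_{j\in J}$ away from $f(M\cap X_0)$, and the same closing via Theorem~\ref{thrm:d18}, Theorem~\ref{thrm:tdhdsub}, and Theorem~\ref{thrm:tdhd4}. Your explicit remark that ordinal addition is monotone in each argument is a small point the paper leaves implicit, but otherwise the arguments coincide.
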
 

\begin{theorem}
If $X=X_* \cup X_0$ where $X_*$ is a non-empty closed subset and $X_0$ is a countable subset then $$\tDHD(X)=\tDHD(X_*).$$
\begin{proof}
Let $M$ be a closed subspace of $X$, and $f: M\rightarrow Y$ where $f\in \mathcal{L}^s(X)$. Then $f(M\cap X_*)$ is closed and $f(M\cap X_0)$ is countable. Then by Theorem \ref{thrm:d14} $$D(f(M))=D(f(M\cap X_*)) \leq \tDHD(X_*).$$ As $f$ and $M$ were arbitrary, by taking a supremum, Theorem \ref{thrm:tdhd4} implies that $\tDHD(X)\leq \tDHD(X_*).$ Theorem \ref{thrm:tdhdsub} implies equality holds.
\end{proof}
\end{theorem}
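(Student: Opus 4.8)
The plan is to prove the two inequalities separately, mirroring the template already used for the compact union theorems earlier in this section. The inequality $\tDHD(X_*)\le\tDHD(X)$ is immediate: since $X_*$ is a metric subspace of $X$, the Subspace Theorem (Theorem~\ref{thrm:tdhdsub}) delivers it with no further work. All of the content therefore lies in the reverse inequality $\tDHD(X)\le\tDHD(X_*)$.

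For that direction I would first invoke Theorem~\ref{thrm:tdhd4} to reduce to maps with closed domains, so that it suffices to show $D(f(M))\le\tDHD(X_*)$ for every $f\in\mathcal{L}(X)$ whose domain $M$ is closed in $X$. Fixing such an $f\colon M\to Y$, I split the domain along the given decomposition, $M=(M\cap X_*)\cup(M\cap X_0)$. Here $M\cap X_*$ is closed in $X$ as an intersection of closed sets, and $M\cap X_0$ is countable as a subset of $X_0$; hence on the image side $f(M)=f(M\cap X_*)\cup f(M\cap X_0)$ with the second piece countable. The idea is then to feed this decomposition of $f(M)$ into the $D$-dimension result Theorem~\ref{thrm:d14} to obtain $D(f(M))=D(f(M\cap X_*))$, and to note that the restriction of $f$ to $M\cap X_*\subseteq X_*$ is a Lipschitz map belonging to $\mathcal{L}(X_*)$, so that $D(f(M\cap X_*))\le\tDHD(X_*)$ directly from Definition~\ref{def:tDHD}. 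Taking the supremum over all admissible $f$ and applying Theorem~\ref{thrm:tdhd4} once more yields $\tDHD(X)\le\tDHD(X_*)$. The degenerate case $M\cap X_*=\emptyset$ is harmless, since then $f(M)$ is countable and $D(f(M))\le 0\le\tDHD(X_*)$.

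The step requiring the most care -- and the one I expect to be the main obstacle -- is the application of Theorem~\ref{thrm:d14} to $f(M)$, because that result needs its distinguished piece to be genuinely closed, whereas a Lipschitz image of a closed set need not be closed in $f(M)$ once $X$ is not assumed compact. To respect this I would apply Theorem~\ref{thrm:d14} not to $f(M\cap X_*)$ itself but to its closure $W:=\overline{f(M\cap X_*)}$ taken inside $f(M)$: then $f(M)\setminus W$ is open in $f(M)$ and contained in the countable set $f(M\cap X_0)$, hence countable, so Theorem~\ref{thrm:d14} legitimately gives $D(f(M))=D(W)$. The genuinely delicate residual point is then to bound $D(W)$ by $\tDHD(X_*)$, since passing from $f(M\cap X_*)$ to its closure can in principle raise the $D$-dimension and the new limit points arise from the countable part, which does not lie in $X_*$; one would attempt to pull $W$ back to the closed set $f^{-1}(W)\cap M$, whose intersection with $X_*$ is again $M\cap X_*$, and re-examine the decomposition there. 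It is worth emphasizing that in the compact setting this obstacle evaporates entirely, because $f(M\cap X_*)$ is then the continuous image of a compact set and is automatically closed; the hypotheses here are strictly weaker, so this is precisely the place where the argument must be justified with care.
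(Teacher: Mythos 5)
Your route is the same as the paper's: Theorem~\ref{thrm:tdhdsub} gives $\tDHD(X_*)\le\tDHD(X)$, and the reverse inequality is attempted via the reduction to closed domains in Theorem~\ref{thrm:tdhd4}, the decomposition $f(M)=f(M\cap X_*)\cup f(M\cap X_0)$, and Theorem~\ref{thrm:d14}. The point of divergence is that the paper simply asserts that $f(M\cap X_*)$ is closed and applies Theorem~\ref{thrm:d14} to it directly, whereas you observe that this closedness is unjustified when $X$ is not compact. That observation is correct, and it identifies a real defect: every other sum theorem in this section hypothesizes compactness of $X$, under which $M\cap X_*$ is compact and its Lipschitz image is automatically closed; the theorem as stated omits compactness, and with that omission the paper's own proof is incomplete at exactly the step you flag.

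However, your proposed repair does not close the argument, as you yourself concede. Passing to $W=\overline{f(M\cap X_*)}$ inside $f(M)$ legitimately yields $D(f(M))=D(W)$ by Theorem~\ref{thrm:d14}, but the remaining inequality $D(W)\le\tDHD(X_*)$ is where all the content now sits, and nothing you have established delivers it. Monotonicity of $D$ gives only $D(f(M\cap X_*))\le D(W)$, the wrong direction; $W$ need not be the image of any map in $\mathcal{L}(X_*)$, so Definition~\ref{def:tDHD} cannot be invoked for it; and taking closures can genuinely raise $D$-dimension even when only countably many points are added --- the irrationals in $[0,1]$ have $D=0$ while their closure has $D=1$, which also shows that the configuration you are left with (a set dense in $W$ with countable complement) carries no dimension bound by itself. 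Your pullback suggestion is circular: with $Z=f^{-1}(W)\cap M$, the map $f|_Z\colon Z\to W$ and the decomposition $Z=(M\cap X_*)\cup(Z\cap X_0)$ reproduce exactly the situation you started from, now with dense image. So the proposal, like the paper's proof, is complete only if $X$ is assumed compact (or some other argument shows $f(M\cap X_*)$ is closed in $f(M)$); the residual step you describe as delicate is a genuine gap, not a routine verification, and any honest write-up should either add the compactness hypothesis or supply a new idea for bounding $D(W)$.
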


\section{Smirnov's Spaces}
The goal of this section is to apply the $D$-variant of transfinite Hausdorff dimension to topological Smirnov's spaces along with some notable subspaces. Defined first by Smirnov in \cite[Definition 2]{smirnovUniversalSpacesCertain1959}, and later extended by Henderson in \cite[Definition $(Q^{\alpha})$]{hendersonDimensionNewTransfinite1968} to all $\alpha\geq \om_1$ these spaces are a core example for demonstrating properties of transfinite dimension functions. A good account of properties of Smirnov's Spaces can be found in \cite[Example 7.1.33]{engelkingDimensionTheory1978}. Our goal in the first part of this section is to use Smirnov's Spaces as a vehicle for comparison between transfinite Hausdorff dimension and the $D$-variant of transfinite Hausdorff dimension. For this reason we mostly follow the notation and arguments in \cite[Section 5]{urbanskiTransfiniteHausdorffDimension2009}. In this effort, we consider Smirnov's Spaces only for $\alpha<\om_1$ in order to prove similar results for the $D$-variant of transfinite Hausdorff dimension. In the latter part of this section, we will drop the restriction of having $\alpha<\om_1$ and construct important dense subspaces of Smirnov's spaces that show Theorem \ref{BDTHM3}.

With the overall goals of this section in mind, we recall the general definition of a Smirnov space. 

\begin{definition}[$S_\alpha$]\label{def:salpha}
For $0\leq \alpha < \omega_0$, define $S_\alpha$  to be the finite dimensional cell $I^\alpha$. For $\omega_0 \leq \alpha<\omega_1$, if $\alpha=\beta +1$ set $S_\alpha= S_\beta \times I$ and for $\alpha$ a limit ordinal define $S_\alpha= \omega(\bigoplus_{j\in J}X_j)$, the Alexandrov compactification of the disjoint union, and we let $\rho_\alpha$ be any compatible metric bounded by one.. Assume, inductively, that, for each $\omega_1\leq\beta<\alpha$, $S_\beta$ has been defined and that $\rho_\beta$ is a metric function for $S_\beta$ of bound 1. If $\alpha=\beta + 1$, then define $S_\alpha = S_\beta \times I$ and, for $q,q'\in S_\beta$ and $t,t'\in I$, define 
\[
\rho_\alpha((q,t),(q',t'))=\frac{1}{2}[\rho_\beta (q,q')+ |t-t'|].
\]
If $\alpha$ is an uncountable limit ordinal, then define $S_\alpha$ to be the union of a point $p$ and the discrete union of all $S_\beta,$ $0\leq \beta < \alpha.$ Define the metric on each $S_\alpha$ by:
\begin{equation}
    \rho_\alpha(q,q')= 
    \begin{cases} \frac{1}{n(\beta)+1} & \text{if } p=q \text{ and }q'\in S_\beta\text{, } 0\leq \beta < \alpha, \\
      \frac{\rho_\beta(q,q')}{n(\beta)+1} & \text{if } q,q'\in S_\beta, 0\leq \beta < \alpha, \\
      \rho_\alpha(p,q) + \rho_\alpha(p,q') & \text{if } q,q' \text{ belong to different } S_\beta, 0\leq \beta < \alpha. 
   \end{cases}
\end{equation} 
\end{definition}

Smirnov's spaces have the naturally associated $D$-dimension.

\begin{theorem}[Theorem 10 in \cite{hendersonDimensionNewTransfinite1968}]\label{thrm:Salpha}
For each space, $S_\alpha$, we have $D(S_\alpha)=\alpha$.
\end{theorem}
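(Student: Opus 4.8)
The plan is to induct transfinitely on $\alpha$, following the recursive definition of $S_\alpha$ in Definition \ref{def:salpha} and treating the upper and lower bounds separately in each of the three cases (finite, successor, limit). For the base case $\alpha<\om_0$ we have $S_\alpha=I^\alpha$, a finite dimensional cube with $\mInd(I^\alpha)=\alpha$; since $D$ coincides with $\mInd$ on finite dimensional metrizable spaces \cite[Theorem 1]{hendersonDimensionNewTransfinite1968}, this gives $D(S_\alpha)=\alpha$. The lower bound in the limit case is then free: when $\alpha$ is a limit ordinal, the building blocks $S_\beta$ are closed subspaces of $S_\alpha$ for a cofinal set of indices $\beta<\alpha$, so the Subspace Theorem for $D$-dimension \cite[Theorem 2]{hendersonDimensionNewTransfinite1968} together with the inductive hypothesis gives $D(S_\alpha)\geq\sup_{\beta}D(S_\beta)=\sup_{\beta}\beta=\alpha$.

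For the upper bound in the successor case $\alpha=\beta+1$, where $S_\alpha=S_\beta\times I$, I would start from a $\beta$-$D$ representation $S_\beta=\bigcup\{A_\delta:0\leq\delta\leq\lambda(\beta)\}$ and propose $\bigcup\{A_\delta\times I:0\leq\delta\leq\lambda(\beta)\}$ as a representation of $S_\beta\times I$. Each $A_\delta\times I$ is closed and finite dimensional, so Criterion \ref{D:1} holds; the tails $\big(\bigcup_{\delta\leq\eta\leq\lambda(\beta)}A_\eta\big)\times I$ are closed because $I$ is compact, giving Criterion \ref{D:2}; the largest-index Criterion \ref{D:4} is inherited coordinatewise from the representation of $S_\beta$; and for the top piece $\mInd(A_{\lambda(\beta)}\times I)\leq\mInd(A_{\lambda(\beta)})+1=n(\beta)+1$ by the product inequality $\mInd(A\times I)\leq\mInd(A)+1$ valid for finite dimensional metrizable $A$. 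Since $\lambda(\beta+1)=\lambda(\beta)$ and $n(\beta+1)=n(\beta)+1$, this is a $\gamma$-$D$ representation with $\gamma\leq\beta+1$, so $D(S_\beta\times I)\leq\beta+1$.

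For the upper bound in the limit case, $S_\alpha$ is the union of a single point $p$ (the point at infinity in the Alexandrov compactification, respectively the apex of the point-union) with the blocks $\{S_\beta\}$, each of which is closed in $S_\alpha$ and which form a discrete, hence locally finite, clopen family on $S_\alpha\setminus\{p\}$. Applying Theorem \ref{thrm:d18} with $X_0=\{p\}$, and then the Locally Finite Sum Theorem (Theorem \ref{thrm:dlocalfin}) with common bound $\alpha$ to this family, whose members $S_\beta$ each satisfy $D(S_\beta)=\beta<\alpha$, gives
\[
D(S_\alpha)\leq D(\{p\})+D(S_\alpha\setminus\{p\})\leq 0+\alpha=\alpha.
\]
Together with the limit lower bound already noted, this closes the limit case.

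The main obstacle is the lower bound in the successor case, $D(S_\beta\times I)\geq\beta+1$: the Subspace Theorem only produces $D(S_\beta\times I)\geq D(S_\beta\times\{0\})=\beta$, and every tool in the preceding section (Corollary \ref{cor:dmax}, Theorem \ref{thrm:dlocalfin}, Theorem \ref{thrm:d4}) yields only upper bounds, so none of them can rule out $D(S_\beta\times I)=\beta$. The genuine content is that multiplying by $I$ strictly increases the $D$-dimension, and I would isolate it as a product lemma $D(X\times I)\geq D(X)+1$ proved by the essential-family/partition method of inductive dimension theory: the pair $(S_\beta\times\{0\},S_\beta\times\{1\})$ is essential, so any partition $L$ between the two ends projects onto $S_\beta$ and inherits enough of its dimension-$\beta$ structure to force $D(L)\geq\beta$, whence $D(S_\beta\times I)\geq\beta+1$. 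This separator argument (equivalently, the $\mtrker$ characterization of $D$-dimension announced in \cite{kozlovskiiDveTeoremyMetricheskih1972}) is precisely the ingredient not available from the sum and excision theorems reproduced above, and is the step I would either develop as a standalone lemma or import from \cite[Theorem 10]{hendersonDimensionNewTransfinite1968}.
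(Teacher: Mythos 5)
First, a point of comparison: the paper does not prove this statement at all --- it is imported verbatim as Theorem 10 of Henderson \cite{hendersonDimensionNewTransfinite1968}, so your attempt cannot be matched against an internal argument and must stand on its own. Most of it does: the base case, both upper bounds (the product representation $\bigcup_\delta A_\delta\times I$ in the successor case, and the decomposition into $\{p\}$ plus a discrete family of closed blocks handled by Theorems \ref{thrm:d18} and \ref{thrm:dlocalfin} in the limit case), and the limit lower bound via the Subspace Theorem are all sound, and you correctly isolate the entire difficulty in the successor lower bound $D(S_\beta\times I)\geq\beta+1$.

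However, the way you propose to close that step fails. The product lemma $D(X\times I)\geq D(X)+1$ is false for general metrizable $X$: take $X=\bigoplus_{n<\om_0}I^n$, the non-compactified disjoint union of cubes. Setting $A_n=I^n$ and $A_{\om_0}=\emptyset$ gives a $D$ representation with $\lambda(\beta)=\om_0$ and $n(\beta)=0$ (tails are clopen, and Criterion \ref{D:3} assigns $n(\beta)=0$ to an empty top set), so $D(X)=\om_0$ by the Subspace Theorem; but $X\times I=\bigoplus_{n<\om_0}I^{n+1}$ admits a representation of exactly the same form, so $D(X\times I)=\om_0=D(X)$. Thus no general product inequality can do the job: the successor step must exploit the specific structure of $S_\beta$, namely the accumulation point that glues the finite-dimensional pieces together --- this is precisely the difference between $S_{\om_0}$ and the disjoint union above, which share $D$-dimension $\om_0$ yet whose products with $I$ have different $D$-dimension. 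Your sketch of the separator argument (``any partition $L$ projects onto $S_\beta$ and inherits enough of its dimension-$\beta$ structure to force $D(L)\geq\beta$'') is not an argument: surjectivity of a projection transfers no dimension lower bound, and since $D$ is defined by decompositions rather than by partitions, relating partitions between the faces of $S_\beta\times I$ to $D$-representations is itself the substantive work (and would still have to handle the non-compact, non-separable $S_\beta$ with $\beta\geq\om_1$, where essential-family methods are delicate). Your stated fallback --- importing the step from \cite[Theorem 10]{hendersonDimensionNewTransfinite1968} --- is circular, since that theorem is the statement being proved. So the proposal has a genuine gap at its one crucial step.
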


 It will also be important that our spaces have balanced Alexandrov metrics. Let us recall some information about metrics on product spaces relayed in \cite[Section 4]{urbanskiTransfiniteHausdorffDimension2009}. If $(X_1,\rho_1)$ and $(X_2,\rho_2)$ are two arbitrary metric spaces, then $X_1\times X_2$ is a metric space with the metric $\rho$ given by the formula $$\rho((a_1,a_2),(b_1,b_2))=\sqrt{\rho_1^2(a_1,b_1)+\rho_2^2(a_2,b_2)}.$$
Given two sets $A$ and $B$ in a metric space $(X,\rho)$ we define $$\text{dist}_\rho(A,B)\coloneqq \inf \{\rho(a,b) : (a,b)\in A\times B\}$$ and $$\text{Dist}_\rho(A,B)\coloneqq \sup \{\rho(a,b) : (a,b)\in A\times B\}.$$
Let $J$ be a countable infinite set and let $\{(X_j,\rho_j)\}_{j\in J}$ be a collection of compact metric spaces. Let $\omega(\bigoplus_{j\in J} X_j)$ be the topological one point (Alexandrov) compactification of the topological disjoint union $\bigoplus_{j\in J} X_j$. A metric space $(\omega(\bigoplus_{j\in J} X_j),\rho)$ is called a metric one point compactification of $\bigoplus_{j\in J} X_j$ if $\rho$ induces the Alexandrov compactification topology on $\omega(\bigoplus_{j\in J} X_j)$, and for each $j\in J$ the restriction $\rho|_{X_j}$ is proportional to $\rho_j$. The metric $\rho$ is then referred to as an Alexandrov metric. An Alexandrov metric $\rho$ on $\omega(\bigoplus_{j\in J} X_j)$ is called balanced if $$D_\rho \coloneqq \max \left\{\sup_{i,j\in J}\left\{\displaystyle\frac{\text{Dist}_\rho(X_i,X_j)}{\text{dist}_\rho(X_i,X_j)}\right\}, \sup_{j\in J}\left\{\frac{\text{Dist}_\rho(\omega,X_j)}{\text{dist}_\rho(\omega,X_j)}\right\}\right\}<\infty .$$ The number $D_\rho$ is referred to as the balance constant of the metric $\rho$. Then we have the following.

\begin{proposition}[Proposition 4.2 in \cite{urbanskiTransfiniteHausdorffDimension2009}]\label{metric2}
If $J$ is a countable infinite set and if $\{(X_j,\rho_j)\}_{j\in J}$ is a collection of compact metric spaces, then there exists at least one balanced Alexandrov metric on $\omega(\bigoplus_{j\in J} X_j)$.
\end{proposition}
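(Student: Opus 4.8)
The plan is to exhibit a single metric of the required form and verify it works; since only the existence of \emph{some} balanced Alexandrov metric is claimed, the cleanest route is to write down the most symmetric candidate and check the three requirements in turn (metric axioms, agreement with the Alexandrov topology, and finiteness of the balance constant). First I would fix a bijection $J\cong\NN$ and record that, by compactness, each factor has $d_j:=\diam_{\rho_j}(X_j)<\infty$. I would pick a sequence of ``heights'' $b_j>0$ with $b_j\to 0$ (e.g.\ $b_j=2^{-j}$) and rescale each factor by $c_j:=b_j/(1+d_j)>0$, so that $\diam_{c_j\rho_j}(X_j)=c_jd_j\le b_j$. On $Z:=\{\omega\}\cup\bigoplus_{j}X_j$ I would then define $\rho$ as the star metric centered at $\omega$: set $\rho|_{X_j\times X_j}=c_j\rho_j$, set $\rho(\omega,x)=b_j$ for every $x\in X_j$, and set $\rho(x,y)=b_i+b_j$ whenever $x\in X_i$, $y\in X_j$ with $i\neq j$. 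By construction $\rho|_{X_j}$ is proportional to $\rho_j$, which is one of the two defining properties of an Alexandrov metric.

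Next I would verify the metric axioms. Symmetry and positivity are immediate, so the only content is the triangle inequality, which I would dispatch by a short finite case analysis according to how three points distribute among $\{\omega\}$ and the $X_j$. Any triangle meeting two distinct factors reduces to the identity $\rho(x,y)=\rho(x,\omega)+\rho(\omega,y)$, i.e.\ to additivity of the heights, and a triangle inside one $X_j$ is handled by $\rho_j$ being a metric. The one inequality with genuine content is for two points $x,y\in X_j$ weighed against $\omega$, where I need $c_j\rho_j(x,y)\le 2b_j$; this is exactly why I chose $c_j=b_j/(1+d_j)$, since it forces $c_jd_j\le b_j\le 2b_j$ with room to spare.

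To see that $\rho$ induces the Alexandrov topology I would show $(Z,\rho)$ is compact and that the identity onto the one-point compactification is a homeomorphism. On $Z\setminus\{\omega\}$, a ball of radius $r<b_j$ about $x\in X_j$ misses $\omega$ and every other factor (all such points lie at distance $\ge b_j>r$), so each $X_j$ is open and carries its own topology, whence $Z\setminus\{\omega\}$ has the disjoint-union topology. Since $b_j\to 0$, the balls $B_\rho(\omega,\varepsilon)=\{\omega\}\cup\bigcup_{b_j<\varepsilon}X_j$ are complements of finitely many compact factors; this simultaneously yields compactness (a sequence either stays in one compact $X_j$ or meets infinitely many factors and hence converges to $\omega$) and shows these balls form a neighborhood base at $\omega$ matching the Alexandrov filter. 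A continuous bijection from a compact space to a Hausdorff space is a homeomorphism, so the two topologies agree.

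Finally I would compute the balance constant. Because the height is constant on each $X_j$ and the inter-factor distance $b_i+b_j$ does not depend on the chosen points, one gets $\operatorname{Dist}_\rho(\omega,X_j)=\operatorname{dist}_\rho(\omega,X_j)=b_j$ and $\operatorname{Dist}_\rho(X_i,X_j)=\operatorname{dist}_\rho(X_i,X_j)=b_i+b_j$, so every ratio equals $1$ and $D_\rho=1<\infty$; hence $\rho$ is a balanced Alexandrov metric. The only genuinely delicate point in the whole argument is the simultaneous calibration of $b_j$ and $c_j$ so that the ``$b_j\to0$'' requirement for the topology and the triangle condition $c_jd_j\le 2b_j$ hold at once. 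Once that calibration is fixed, the balance estimate is essentially free, precisely because the star construction renders all the relevant distances constant.
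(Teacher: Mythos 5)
Your construction is correct, and it is essentially the same approach as the source this proposition is quoted from (the paper itself does not re-prove it, citing Urba\'nski): a star metric with vanishing heights $b_j$ from $\omega$ to $X_j$, internal metrics rescaled so that $c_j\,\mathrm{diam}_{\rho_j}(X_j)\leq b_j$, and cross-factor distances $b_i+b_j$, which is exactly the shape of the metric the paper itself writes down for Smirnov spaces in Definition~\ref{def:salpha} and declares balanced. Your verification of the three requirements (triangle inequality via the calibration $c_jd_j\leq b_j$, agreement with the Alexandrov topology via the balls $B_\rho(\omega,\epsilon)$, and $D_\rho=1$ because all relevant distances are constant on each pair of pieces) is complete and correct.
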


\begin{lemma}[Lemma 4.3 in \cite{urbanskiTransfiniteHausdorffDimension2009}]\label{metric3}
Suppose that $J$ is a countable infinite set, $\{(X_j,\rho_j)\}_{j\in J}$ is a collection of compact metric spaces, and $\rho$ is a balanced Alexandrov metric on $\omega(\bigoplus_{j\in J} X_j)$. Suppose further that for each $j\in J$, $A_j$ is a subset of $X_j$, and $f_j: A_j \rightarrow X_j$ is a Lipschitz continous map with the Lipschitz constant bounded above by the same number $L$. Define the map $$f: \{\omega\}\cup \bigcup_{j\in J}A_j \rightarrow \omega\left(\bigoplus_{j\in J} X_j\right)$$ by requiring that $f(\omega)=\omega$ and $f|_{A_j}=f_j$ for all $j\in J$. Then $f$ is a Lipschitz map with $\text{Lip}(f)\leq \max \{L,D_\rho\}$. Also, $$\dIm(f)=\{\omega\}\cup \bigcup_{j \in J} f_j(A_j).$$
\end{lemma}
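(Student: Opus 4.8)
The plan is to treat the two assertions separately, disposing of the image identity first since it is essentially formal, and then concentrating on the Lipschitz estimate, which is the real content. For the image: by construction $f(\omega)=\omega$ and $f(A_j)=f_j(A_j)$ for every $j$, and the domain $\{\omega\}\cup\bigcup_{j}A_j$ is the disjoint union of $\{\omega\}$ with the sets $A_j\subseteq X_j$, which sit in distinct summands; hence $\dIm(f)=\{\omega\}\cup\bigcup_{j\in J}f_j(A_j)$ with no further argument. The same disjointness shows that $f$ is well defined, so nothing needs checking there.

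For the Lipschitz bound I would fix two points $x,y$ in the domain and estimate $\rho(f(x),f(y))$ by a case analysis according to where $x$ and $y$ lie. First, if $x,y\in A_j$ for a common $j$, then $f(x),f(y)\in X_j$, and since the Alexandrov metric restricts on $X_j$ to a positive constant multiple of $\rho_j$, that proportionality constant cancels, so $f_j$ being $L$-Lipschitz for $\rho_j$ yields $\rho(f(x),f(y))\leq L\,\rho(x,y)$. Second, if one point is $\omega$, say $x=\omega$ and $y\in A_j$, then $f(x)=\omega$ and $f(y)\in X_j$, so $\rho(f(x),f(y))\leq \text{Dist}_\rho(\omega,X_j)$ while $\rho(x,y)\geq \text{dist}_\rho(\omega,X_j)$; the balance inequality $\text{Dist}_\rho(\omega,X_j)\leq D_\rho\,\text{dist}_\rho(\omega,X_j)$ then gives $\rho(f(x),f(y))\leq D_\rho\,\rho(x,y)$. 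Third, if $x\in A_i$ and $y\in A_j$ with $i\neq j$, the identical argument applied to the balance bound $\text{Dist}_\rho(X_i,X_j)\leq D_\rho\,\text{dist}_\rho(X_i,X_j)$ gives $\rho(f(x),f(y))\leq D_\rho\,\rho(x,y)$. Taking the largest constant arising across these cases produces $\text{Lip}(f)\leq\max\{L,D_\rho\}$.

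The step I expect to require the most care is the pair of cross cases (one point at $\omega$, or the two points in distinct summands): there the quantities $\rho(f(x),f(y))$ and $\rho(x,y)$ are not comparable pointwise, and the estimate succeeds only because the balance condition bounds the ratio of $\text{Dist}_\rho$ over an entire piece by $D_\rho$ times $\text{dist}_\rho$ over the same piece. The observation that legitimizes this is that $f(x)$ and $f(y)$ land in the very summands $X_i,X_j$ that contain $x$ and $y$, so the numerator of the balance ratio bounds $\rho(f(x),f(y))$ from above while its denominator bounds $\rho(x,y)$ from below, and the two combine to give the desired $D_\rho$ factor.
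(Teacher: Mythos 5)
Your proof is correct; this paper actually states the lemma without reproducing a proof (it is quoted from Urba\'nski's Lemma 4.3), and your argument is essentially the standard one used there: the image identity is formal, and the Lipschitz bound follows from the three-way case analysis in which the same-summand case uses the $L$-Lipschitz hypothesis (with the proportionality constant of $\rho|_{X_j}$ to $\rho_j$ cancelling), while the two cross cases use that $f$ maps each piece into itself, so $\rho(f(x),f(y))\leq \mathrm{Dist}_\rho \leq D_\rho\,\mathrm{dist}_\rho\leq D_\rho\,\rho(x,y)$ over the relevant pair of pieces. Your identification of the crucial point --- that the balance condition is exactly what makes the non-pointwise-comparable cross cases work --- is accurate, so there is nothing to add.
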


We will now recall the construction of a Smirnov's sequence. Let $I$ be the interval $[0,1]$ endowed with its standard Euclidean metric. Define a transfinite sequence $((S_{\alpha}, \rho_{\alpha}))_{\alpha< \omega_1}$ consisting of compact metric spaces in the following way. Set $S_0=\{0\}, S_{\alpha}=S_\beta \times I$ if $\alpha=\beta +1$, and $(S_{\alpha}, \rho_{\alpha})$ as the balanced Alexandrov metric compactification $\omega(\bigoplus_{\beta < \alpha}S_\beta)$ of $\bigoplus_{\beta < \alpha}S_\beta$ if $\alpha$ is a limit ordinal. Proposition \ref{metric2} implies that Smirnov's sequences are well defined.

Now we recall the definition of Smirnov's Cantor sets and sequences.
Suppose $C\subseteq I$ is a topological Cantor set (perfect and totally disconnected) with positive Lebesgue measure $\lambda (C)$. Let $\phi :C \rightarrow I$ be the function given by the formula $$\phi (t)=\lambda (C)^{-1} \lambda([\min(C),t]).$$
Clearly $\phi$ is a Lipschitz continuous map with Lipschitz constant equal to $\lambda(C)^{-1}$ and $\phi(C)=I$. 
\begin{definition}[C$_\alpha$]\label{calpha}
Given a Smirnov's sequence $((S_{\alpha}, \rho_{\alpha}))_{\alpha< \omega_1}$ define $C_0$ to be $\{0\}$, $C_\alpha=C_\beta \times C$ if $\alpha=\beta +1$, and $C_\alpha=\{\omega \} \cup \bigcup_{\beta<\alpha}C_\beta$, if $\alpha < \omega_1$ is a limit ordinal. $((C_{\alpha}, \rho_{\alpha}|_{C_\alpha}))_{\alpha< \omega_1}$ is referred to as a Smirnov's sequence of Cantor sets. If $\alpha \geq \omega_1$ we will define $C_\alpha$ as follows, first assume $C_\beta$ has been defined for all $\beta < \alpha$. If $\alpha=\beta +1$ let $C_\alpha= C_\beta \times C$ and if $\alpha$ is a limit ordinal let $C_\alpha=\{p\}\cup \bigoplus_{\beta < \alpha} C_\beta$. 
\end{definition}

Clearly $C_\alpha \subseteq S_\alpha$ for all $\alpha \in \ord$, and $C_\alpha$ is a topological Cantor set for $\alpha < \omega_1$. Since $\Ind(C)= D(C) = 0$, after application of Theorem \ref{thrm:d4} and Theorem \ref{thrm:dlocalfin} a transfinite induction argument shows that $D(C_\alpha)=0$ for every ordinal $\alpha<\om_1$.

The following two lemmas follow similar arguments to the ones presented by Urba\'nski in \cite{urbanskiTransfiniteHausdorffDimension2009}.
\begin{lemma}\label{lemma:calpha}
If $((S_{\alpha}, \rho_{\alpha}))_{\alpha< \omega_1}$ is a Smirnov's sequence and $((C_{\alpha}, \rho_{\alpha}))_{\alpha< \omega_1}$ is the corresponding Smirnov's sequence of Cantor sets, then $\tDHD(C_\alpha)\geq$ $D(S_\alpha)$ for all $\alpha <\omega_1$.
\begin{proof}
By the same construction in \cite[Lemma 5.2.]{urbanskiTransfiniteHausdorffDimension2009}, one may produce a sequence $(\phi_{\alpha})_{\alpha<\om_1}$ of Lipschitz continuous surjections from $C_{\alpha}$ onto $S_{\alpha}$ with Lipschitz constants bounded above by $\max\{2, \lambda(C)^{-1}\}$. Then, by Definition \ref{def:tDHD} we have for all $\alpha < \omega_1$, $$\tDHD(C_\alpha)\geq D(\mIm(\phi_\alpha))=D(S_\alpha).$$
\end{proof}

\end{lemma}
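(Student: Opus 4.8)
The plan is to reduce the inequality to the existence of one good test map. By Definition \ref{def:tDHD}, $\tDHD(C_\alpha)=\sup\{D(\mIm(f)):f\in\mathcal{L}(C_\alpha)\}$, so it suffices to produce, for each $\alpha<\omega_1$, a single Lipschitz surjection $\phi_\alpha\colon C_\alpha\to S_\alpha$; since $C_\alpha\in\mathcal{P}_m(C_\alpha)$, such a map lies in $\mathcal{L}(C_\alpha)$ and has $\mIm(\phi_\alpha)=S_\alpha$, whence $\tDHD(C_\alpha)\geq D(S_\alpha)$. I would construct the family $(\phi_\alpha)_{\alpha<\omega_1}$ by transfinite induction, and---crucially---carry the uniform bound $\text{Lip}(\phi_\alpha)\leq\max\{2,\lambda(C)^{-1}\}$ along as part of the inductive hypothesis, since this uniformity is exactly what makes the limit step go through.

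For the base case $C_0=S_0=\{0\}$ take the identity. For the successor case $\alpha=\beta+1$ we have $C_\alpha=C_\beta\times C$ and $S_\alpha=S_\beta\times I$; set $\phi_\alpha=\phi_\beta\times\phi$, where $\phi\colon C\to I$ is the stated Lipschitz surjection with $\text{Lip}(\phi)=\lambda(C)^{-1}$. This map is surjective because both factors are, and, using the product metric recalled before Proposition \ref{metric2}, a product of Lipschitz maps has Lipschitz constant at most the maximum of the factor constants; hence $\text{Lip}(\phi_\alpha)\leq\max\{\text{Lip}(\phi_\beta),\lambda(C)^{-1}\}\leq\max\{2,\lambda(C)^{-1}\}$ by the inductive hypothesis.

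For the limit case I would invoke Lemma \ref{metric3}. Here $C_\alpha=\{\omega\}\cup\bigcup_{\beta<\alpha}C_\beta$ sits inside $S_\alpha=\omega(\bigoplus_{\beta<\alpha}S_\beta)$ carrying the restricted balanced Alexandrov metric $\rho_\alpha$. Applying Lemma \ref{metric3} with $X_j=S_\beta$, $A_j=C_\beta$, $f_j=\phi_\beta$, and common Lipschitz bound $L=\max\{2,\lambda(C)^{-1}\}$ yields a map $\phi_\alpha$ fixing $\omega$ and restricting to each $\phi_\beta$, with image $\{\omega\}\cup\bigcup_{\beta<\alpha}\phi_\beta(C_\beta)=\{\omega\}\cup\bigcup_{\beta<\alpha}S_\beta=S_\alpha$ and $\text{Lip}(\phi_\alpha)\leq\max\{L,D_{\rho_\alpha}\}$, where $D_{\rho_\alpha}$ is the balance constant of $\rho_\alpha$.

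The one genuinely delicate point---the main obstacle---is keeping the Lipschitz constants bounded by a single number across all $\alpha<\omega_1$. What saves the argument is that Lemma \ref{metric3} produces a \emph{maximum} $\max\{L,D_{\rho_\alpha}\}$ rather than a sum or product, so the bound does not accumulate over the transfinitely many successor and limit stages; it is merely preserved at each step, provided each balance constant satisfies $D_{\rho_\alpha}\leq\max\{2,\lambda(C)^{-1}\}$. This is precisely where the constant $2$ enters: the balanced Alexandrov metrics furnished by Proposition \ref{metric2} must be normalized so that their balance constants obey this bound, exactly as in Urba\'nski's construction. Granting this control, transfinite induction delivers $\text{Lip}(\phi_\alpha)\leq\max\{2,\lambda(C)^{-1}\}<\infty$ for every $\alpha<\omega_1$; each $\phi_\alpha$ is then an admissible map in $\mathcal{L}(C_\alpha)$ with image $S_\alpha$, and $\tDHD(C_\alpha)\geq D(\mIm(\phi_\alpha))=D(S_\alpha)$ follows from Definition \ref{def:tDHD}.
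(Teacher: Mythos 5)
Your proposal is correct and is essentially the paper's own argument: the paper simply cites Urba\'nski's construction (his Lemma 5.2) for the Lipschitz surjections $\phi_\alpha\colon C_\alpha\to S_\alpha$ with uniform bound $\max\{2,\lambda(C)^{-1}\}$, and your transfinite induction---identity at the base, product maps at successors, Lemma \ref{metric3} at limits, with the uniform Lipschitz bound carried as part of the inductive hypothesis---is precisely that cited construction written out, followed by the same appeal to Definition \ref{def:tDHD}. You also correctly isolate the one delicate point, namely that the balance constants $D_{\rho_\alpha}$ of the Alexandrov metrics chosen at limit stages must be uniformly controlled so the bound does not blow up along the induction, which is exactly the normalization Urba\'nski's construction (via Proposition \ref{metric2}) supplies and which the paper's citation implicitly relies on as well.
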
 

\begin{lemma}
 If $((S_{\alpha}, \rho_{\alpha}))_{\alpha< \omega_1}$ is a Smirnov's sequence, then $\tDHD(S_\alpha)<\omega_1$ for all $\alpha <\omega_1$.
 \begin{proof}
  By the same transfinite induction argument in \cite[Lemma 5.3.]{urbanskiTransfiniteHausdorffDimension2009} replacing the transfinite Hausdorff dimension with the $D$-variant of transfinite Hausdorff dimension and replacing \cite[Theorem 3.7]{urbanskiTransfiniteHausdorffDimension2009} with our Theorem \ref{thrm:tdhdloc} we conclude that $$\tDHD(S_\alpha)=\tDHD(S_\gamma \times I^n)<\omega_1.$$
 \end{proof}
\end{lemma}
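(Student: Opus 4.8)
The plan is to prove the statement by transfinite induction on $\alpha<\om_1$, exploiting the recursive structure of the Smirnov spaces. The first step is to normalise $\alpha$: writing $\alpha=\gamma+n$ with $\gamma$ a limit ordinal (or $0$) and $n$ finite, repeated application of the successor rule $S_{\beta+1}=S_\beta\times I$ gives $S_\alpha=S_\gamma\times I^n$, so it suffices to bound $\tDHD(S_\gamma\times I^n)$. When $\gamma=0$ the space $S_\alpha=I^n$ is finite-dimensional, and Theorem \ref{BDTHM1} gives $\tDHD(I^n)\le\lfloor\HD(I^n)\rfloor=n<\om_1$; this is the base case. For the inductive step I would assume $\tDHD(S_{\beta'})<\om_1$ for every $\beta'<\alpha$ and treat the remaining case $\gamma>0$.

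For $\gamma$ a limit ordinal, $S_\gamma$ is the Alexandrov compactification $\{\omega\}\cup\bigsqcup_{\beta<\gamma}S_\beta$, so
\[
S_\alpha=S_\gamma\times I^n=(\{\omega\}\times I^n)\;\cup\;\bigsqcup_{\beta<\gamma}\big(S_\beta\times I^n\big),
\]
where each clopen piece $S_\beta\times I^n$ equals $S_{\beta+n}$, and the family $\{S_\beta\times I^n\}_{\beta<\gamma}$ is locally finite off the closed ``basepoint fibre'' $\{\omega\}\times I^n\cong I^n$. This is exactly the configuration of the locally finite excision Theorem \ref{thrm:tdhdloc}, which formally yields $\tDHD(S_\alpha)\le\tDHD(\{\omega\}\times I^n)+\tDHD\big(\bigsqcup_{\beta<\gamma}S_{\beta+n}\big)$; the first summand is $\le n$. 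Since $\beta+n<\gamma\le\alpha$ for every $\beta<\gamma$, the inductive hypothesis gives $\tDHD(S_{\beta+n})<\om_1$, and as $\gamma<\om_1$ is countable, $s:=\sup_{\beta<\gamma}\tDHD(S_{\beta+n})<\om_1$ by regularity of $\om_1$.

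The main obstacle is that the disjoint union $\bigsqcup_{\beta<\gamma}S_{\beta+n}$ is not compact (the pieces accumulate at the deleted point $\omega$), so the compact locally finite sum theorem does not directly bound its $\tDHD$ by $s$. To get around this I would not treat that union as a black box but unfold the estimate at the level of $D$-dimension: by Theorem \ref{thrm:tdhd4} it is enough to bound $D(f(M))$ for Lipschitz $f$ on a compact (closed in $S_\alpha$) domain $M$. With $M_0=M\cap(\{\omega\}\times I^n)$ and $M_\beta=M\cap(S_\beta\times I^n)$, the compactness of $M$ forces $\{f(M_\beta)\}_\beta$ to be locally finite on the open set $f(M)\setminus f(M_0)$ (precisely the argument opening the proof of Theorem \ref{thrm:tdhdloc}). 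Then the excision Theorem \ref{thrm:d4} gives $D(f(M))\le D\big(f(M)\setminus f(M_0)\big)+D(f(M_0))$, the $D$-level locally finite sum Theorem \ref{thrm:dlocalfin} applied to $f(M)\setminus f(M_0)=\bigcup_{\beta}\big(f(M_\beta)\setminus f(M_0)\big)$ gives $D\big(f(M)\setminus f(M_0)\big)\le\sup_\beta D(f(M_\beta))\le s$, and $D(f(M_0))\le\tDHD(I^n)\le n$. Hence $D(f(M))\le s+n$ by a bound independent of $M$ and $f$, so taking the supremum yields $\tDHD(S_\alpha)\le s+n$. Finally $s+n<\om_1$ because $\om_1$, being an uncountable cardinal, is additively indecomposable (i.e.\ $\om_1\in\HH$), which closes the induction; the two ordinal facts — regularity and additive indecomposability of $\om_1$ — are exactly what keep both the supremum and the sum below $\om_1$.
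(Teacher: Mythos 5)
Your argument is correct, and its skeleton is the same as the paper's (which simply invokes Urba\'nski's transfinite induction with Theorem \ref{thrm:tdhdloc} as the substitute tool): write $\alpha=\gamma+n$, identify $S_\alpha=S_\gamma\times I^n$, split off the fibre $\{\omega\}\times I^n$ from the clopen pieces $S_\beta\times I^n=S_{\beta+n}$, and close the induction at limit ordinals. The genuine difference is in how you close the limit case, and it matters. The paper uses Theorem \ref{thrm:tdhdloc} as a black box, but that theorem, as stated, bounds $\tDHD(S_\alpha)$ by $\tDHD(\{\omega\}\times I^n)+\tDHD\bigl(\bigcup_{\beta<\gamma}S_{\beta+n}\bigr)$, and the second term is the $\tDHD$ of a non-compact space which the induction hypothesis does not control: no theorem stated in the paper bounds it by $\sup_{\beta<\gamma}\tDHD(S_{\beta+n})$, and the compact locally finite sum theorem does not apply to it. You identified exactly this obstacle and repaired it by descending to the level of $D$-dimension: by Theorem \ref{thrm:tdhd4} it suffices to bound $D(f(M))$ for Lipschitz $f$ on compact $M$, the local finiteness of the image family on $f(M)\setminus f(M_0)$ is obtained by the compactness argument opening the proof of Theorem \ref{thrm:tdhdloc}, and then Theorem \ref{thrm:d4} together with Theorem \ref{thrm:dlocalfin} yields $D(f(M))\le s+n$ with $s=\sup_{\beta<\gamma}\tDHD(S_{\beta+n})<\om_1$. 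In effect you prove and use a sup-form strengthening of Theorem \ref{thrm:tdhdloc}, namely $\tDHD(X)\le\sup_{j\in J}\{\tDHD(X_j)\}+\tDHD(X_0)$ for compact $X$, which is what the induction actually needs; combined with the regularity of $\om_1$ this closes the argument. So your route is not merely equivalent to the paper's proof-by-citation --- it supplies the intermediate step that the citation leaves implicit, and that step is worth recording explicitly.
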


Then by Theorem \ref{thrm:tdhdsub}, we have that $\tDHD(C_\alpha)\leq \tDHD(S_\alpha)<\omega_1$. By \cite[Theorem 7.3.16, Example 7.2.12]{engelkingDimensionTheory1978} we have that $\sup_{\alpha <\omega_1}\{D(S_\alpha)\}=\omega_1$, applying Lemma \ref{lemma:calpha}, we get the following.

\begin{theorem}[c.f. Theorem 5.4 in \cite{urbanskiTransfiniteHausdorffDimension2009}]
If $((S_{\alpha}, \rho_{\alpha}))_{\alpha< \omega_1}$ is a Smirnov's sequence and $((C_{\alpha}, \rho_{\alpha}))_{\alpha< \omega_1}$ is the corresponding Smirnov's sequence of Cantor sets, then
\begin{enumerate}[label=\normalfont(\roman*)]
    \item $D(S_\alpha)\leq$ $\tDHD(C_\alpha)< \omega_1$
    \item $\sup_{\alpha < w_1} \{\tDHD(C_\alpha)\}=\omega_1$
    \item $\scard(\{\tDHD(C_\alpha)| \alpha < \omega_1 \})= \aleph_1$
    \item The family $((C_{\alpha}, \rho_{\alpha}))_{\alpha< \omega_1}$ contains uncountably many Cantor sets, no two of which are bi-Lipschitz equivalent.
    \item If $\alpha \geq \omega_0$, then $\HD(C_\alpha)=+\infty$
\end{enumerate}
\end{theorem}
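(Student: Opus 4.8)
The plan is to assemble the five parts from the lemmas already in hand, since the analytic content has been packaged into Lemma~\ref{lemma:calpha} and the two lemmas immediately preceding the statement. For~(i), the lower bound $D(S_\alpha)\leq\tDHD(C_\alpha)$ is exactly Lemma~\ref{lemma:calpha}, while the upper bound follows from the Subspace Theorem (Theorem~\ref{thrm:tdhdsub}): because $C_\alpha\subseteq S_\alpha$ we have $\tDHD(C_\alpha)\leq\tDHD(S_\alpha)$, and the preceding lemma gives $\tDHD(S_\alpha)<\omega_1$. For~(ii) I would take suprema over $\alpha<\omega_1$ through~(i); the upper bound yields $\sup_\alpha\tDHD(C_\alpha)\leq\omega_1$, and combining the lower bound with the cited fact $\sup_\alpha D(S_\alpha)=\omega_1$ forces $\sup_\alpha\tDHD(C_\alpha)\geq\omega_1$, hence equality.

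For~(iii), set $A=\{\tDHD(C_\alpha):\alpha<\omega_1\}$. By~(i) every element of $A$ is a countable ordinal, so $A\subseteq\omega_1$ and $\scard(A)\leq\aleph_1$; by~(ii) we have $\sup A=\omega_1$. Since $\omega_1$ is regular, no countable set of countable ordinals can have supremum $\omega_1$, so $A$ is uncountable and $\scard(A)=\aleph_1$. This regularity step is the only point in the argument that is not a direct citation, and I expect it to be the main (though entirely standard) obstacle.

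For~(iv) I would use that $\tDHD$ is a bi-Lipschitz invariant (the theorem following the Subspace Theorem): if $C_\alpha$ and $C_\beta$ were bi-Lipschitz equivalent then $\tDHD(C_\alpha)=\tDHD(C_\beta)$. Partitioning $\{C_\alpha\}_{\alpha<\omega_1}$ according to the value of $\tDHD$ produces $\scard(A)=\aleph_1$ nonempty classes by~(iii); selecting one representative from each class yields an uncountable subfamily whose members have pairwise distinct $\tDHD$, and hence are pairwise non-bi-Lipschitz-equivalent. Finally, for~(v) I would combine~(i) with Theorem~\ref{thrm:Salpha}, which gives $D(S_\alpha)=\alpha$: for $\alpha\geq\omega_0$ this yields $\tDHD(C_\alpha)\geq D(S_\alpha)=\alpha\geq\omega_0$, whereupon the finite-Hausdorff-dimension theorem (the analogue of Urba\'nski's Theorem~2.8 proved above) forces $\HD(C_\alpha)=+\infty$.
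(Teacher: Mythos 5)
Your proposal is correct and follows essentially the same route as the paper, which derives the theorem directly from Lemma~\ref{lemma:calpha}, the Subspace Theorem~\ref{thrm:tdhdsub} together with the bound $\tDHD(S_\alpha)<\omega_1$, and the cited fact $\sup_{\alpha<\omega_1}D(S_\alpha)=\omega_1$. The paper leaves parts (iii)--(v) as immediate consequences; your explicit filling-in (regularity of $\omega_1$ for (iii), bi-Lipschitz invariance for (iv), and Theorem~\ref{thrm:Salpha} plus Theorem~\ref{BDTHM1} for (v)) is exactly the intended argument.
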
 

We shall now define dense subsets of each $S_\alpha$.

\begin{definition}[$D_\alpha$]
For $\alpha < \omega_1$, as each space $S_\alpha$ is separable, define $D_\alpha$ to be a countable dense subset of $S_{\alpha}$. For $\omega_1\leq\alpha <\Omega$, construct $D_\alpha$ inductively as follows: First assume that, for $\beta < \alpha$, $D_\beta$ has been defined.
For $\alpha= \beta + 1$, let $D_\alpha=D_\beta \times D_1$.  
For $\alpha$ a limit ordinal, let $D_\alpha= \{p\} \cup \bigoplus_{\beta<\alpha} D_\beta$. 
\end{definition} 

$D_\alpha$'s density in $S_{\alpha}$ is clear in the successor case as it is the product of dense sets. For the limit case, note that $p$ is an accumulation point of $S_\beta$ for some values of $\beta$ and $\bigoplus_{\beta<\alpha} D_\beta$ is dense in $\bigoplus_{\beta<\alpha} S_\beta$.

\begin{lemma}\label{cardD}
If $\alpha < \omega_\tau$, then \emph{Card}$(D_\alpha)\leq \aleph_{\tau -1}$, where $1\leq\tau \in \ord$.
\begin{proof}
We shall prove the above statement by transfinite induction.

\begin{proof}[Base Case]\renewcommand\qedsymbol{\checkmark}
For $\alpha < \omega_1$, $\alpha$ is countable and thus $D_\alpha$ is the countable dense set in $S_\alpha$. Then Card($D_\alpha)\leq \aleph_0$.
\end{proof}

 Note that the only cases remaining are for $\alpha\geq \om_1$, thus without loss of generality we only consider these cases in the inductive steps. Assume the statement holds for all $\beta < \alpha$. Further, again without loss of generality, we will assume $1\leq \tau$ is the least ordinal that satisfies $\alpha < \omega_\tau$.

\begin{proof}[Successor Case]\renewcommand\qedsymbol{\checkmark} $D_\alpha= D_\beta \times D_1$. Then 
\[ 
\mcard(D_\alpha)= \mcard(D_\beta)\times \mcard(D_1) \leq \aleph_{\tau -1} \times \aleph_0= \aleph_{\tau -1}.
\] 
\end{proof}

\begin{proof}[Limit Case]\renewcommand\qedsymbol{\checkmark} $D_\alpha= \{p\} \cup \bigcup_{\beta < \alpha} D_\beta$. Then for each $\beta < \alpha$, we have that 
\[
\scard (D_\beta)\leq \aleph_{\tau -1}.
\]
Thus,  
\begin{align*}
    \mcard(\bigcup_{\beta<\alpha}D_\beta)&= \sup_{\beta < \alpha} \{\mcard(D_\beta)\} \times \mcard(\alpha)\leq \aleph_{\tau -1} \times \aleph_{\tau -1}\\
    &=\aleph_{\tau -1}.
\end{align*}
\end{proof}
\noindent So for any ordinal $\alpha$ such that $\alpha < \omega_\tau$, we have that Card($D_\alpha) \leq \aleph_{\tau -1}$.
\end{proof}
\end{lemma}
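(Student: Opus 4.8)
The plan is to prove the bound by transfinite induction on $\alpha$, following the three clauses of the recursive definition of $D_\alpha$ (countable base below $\om_1$, cartesian product at successors, pointed disjoint union at limits). Before starting the induction I would record one structural fact about the index $\tau$: for any $\alpha$ the \emph{least} $\tau$ with $\alpha<\om_\tau$ is necessarily a successor ordinal, because for a limit $\tau$ one has $\om_\tau=\sup_{\sigma<\tau}\om_\sigma$, so $\alpha<\om_\tau$ would already force $\alpha<\om_\sigma$ for some $\sigma<\tau$, contradicting minimality. Hence $\tau-1$ is unambiguous, and minimality of $\tau$ forces $\om_{\tau-1}\le\alpha<\om_\tau$; in particular $\lvert\alpha\rvert\le\aleph_{\tau-1}$. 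This is the fact I will lean on repeatedly.

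For the base case $\alpha<\om_1$ we have $\tau=1$, and $D_\alpha$ is by definition a countable dense subset of $S_\alpha$, so its cardinality is at most $\aleph_0=\aleph_{\tau-1}$. For the inductive step I fix $\alpha\ge\om_1$ and take $\tau$ least with $\alpha<\om_\tau$. In the successor case $\alpha=\beta+1$ we have $D_\alpha=D_\beta\times D_1$; since $\beta<\alpha<\om_\tau$, the least index $\tau_\beta$ for $\beta$ satisfies $\tau_\beta\le\tau$, so the inductive hypothesis gives $\lvert D_\beta\rvert\le\aleph_{\tau_\beta-1}\le\aleph_{\tau-1}$, and combined with $\lvert D_1\rvert\le\aleph_0$ the cardinal product identity yields $\lvert D_\alpha\rvert\le\aleph_{\tau-1}\cdot\aleph_0=\aleph_{\tau-1}$ (here $\tau\ge1$ gives $\aleph_{\tau-1}\ge\aleph_0$). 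In the limit case $D_\alpha=\{p\}\cup\bigcup_{\beta<\alpha}D_\beta$; each $\beta<\alpha$ again has least index $\le\tau$, so $\lvert D_\beta\rvert\le\aleph_{\tau-1}$ by hypothesis, while the number of terms is $\lvert\alpha\rvert\le\aleph_{\tau-1}$ by the structural fact above, whence
\[
\lvert D_\alpha\rvert\le \lvert\alpha\rvert\cdot\sup_{\beta<\alpha}\lvert D_\beta\rvert\le \aleph_{\tau-1}\cdot\aleph_{\tau-1}=\aleph_{\tau-1},
\]
using $\kappa\cdot\kappa=\kappa$ for the infinite cardinal $\kappa=\aleph_{\tau-1}$.

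The arithmetic here is routine, so the only real care is the bookkeeping of the index $\tau$ across the recursion: I must verify that every $\beta<\alpha$ entering the successor or limit clauses lands under an aleph index no larger than $\tau-1$, which is exactly the monotonicity $\beta<\alpha<\om_\tau\Rightarrow\tau_\beta\le\tau\Rightarrow\aleph_{\tau_\beta-1}\le\aleph_{\tau-1}$. I expect the limit case to be the genuinely load-bearing step, since it needs both that $\lvert D_\beta\rvert$ stays bounded by $\aleph_{\tau-1}$ and that the index set $\alpha$ has cardinality at most $\aleph_{\tau-1}$; the latter rests entirely on the observation that the minimal $\tau$ is a successor with $\om_{\tau-1}\le\alpha$, without which the estimate $\lvert\alpha\rvert\le\aleph_{\tau-1}$ could fail. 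I anticipate this indexing subtlety, rather than any single inequality, to be the main thing to get right.
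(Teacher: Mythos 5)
Your proposal is correct and follows essentially the same route as the paper: transfinite induction on $\alpha$ through the three clauses of the definition of $D_\alpha$, with the product identity $\aleph_{\tau-1}\cdot\aleph_0=\aleph_{\tau-1}$ at successors and $\aleph_{\tau-1}\cdot\aleph_{\tau-1}=\aleph_{\tau-1}$ at limits. In fact you are slightly more careful than the paper, which invokes $\scard(\alpha)\leq\aleph_{\tau-1}$ in the limit case without comment, whereas you justify it by observing that the least $\tau$ with $\alpha<\om_\tau$ must be a successor, so that $\om_{\tau-1}\leq\alpha<\om_\tau$.
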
 
By Lemma \ref{cardD}, Theorem \ref{thrm:tdhdbase}, and the fact that $D(S_\alpha)=\alpha$ from Theorem \ref{thrm:Salpha}, we have the following result.

\begin{theorem}
For each ordinal $\alpha < \Omega$, there exists a metric space, $X_\alpha$, such that $\alpha \leq$ $\tDHD(X_\alpha) \leq \omega_\tau$, where $\tau$ is the least such ordinal which satisfies $\alpha < \omega_\tau$.
\end{theorem}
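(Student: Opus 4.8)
The plan is to exhibit a concrete witness $X_\alpha$ for each $\alpha<\Omega$ and then sandwich its $\tDHD$ between $\alpha$ and $\omega_\tau$ using the three ingredients the theorem cites: the cardinality bound on $D_\alpha$ from Lemma \ref{cardD}, the weight-based upper bound from Theorem \ref{thrm:tdhdbase}, and the computation $D(S_\alpha)=\alpha$ from Theorem \ref{thrm:Salpha}. The natural witness is $X_\alpha = S_\alpha$ itself (equipped with a metric compatible with the inductively defined $\rho_\alpha$), since the dense subsets $D_\alpha$ were built precisely to control its weight. First I would set $\tau$ to be the least ordinal with $\alpha<\omega_\tau$ and observe that this $\tau$ exists and is well-defined for every $\alpha\in\ord$.

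For the lower bound, I would invoke Theorem \ref{thrm:Salpha} together with Definition \ref{def:tDHD}: since $\tDHD(X)\geq D(X)$ holds for every metric space by definition, and $D(S_\alpha)=\alpha$, we immediately get $\alpha = D(S_\alpha)\leq \tDHD(S_\alpha)$. This disposes of the left inequality with essentially no work. For the upper bound, I would argue that $S_\alpha$ has a topological base of cardinality at most $\aleph_{\tau-1}$. The key observation is that $D_\alpha$ is dense in $S_\alpha$ and, by Lemma \ref{cardD}, satisfies $\scard(D_\alpha)\leq\aleph_{\tau-1}$; in a metrizable space the weight equals the density character, so the existence of a dense set of size $\leq\aleph_{\tau-1}$ yields a base of cardinality $\leq\aleph_{\tau-1}$. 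Provided $D(S_\alpha)=\alpha<\Omega$ (which holds since $\alpha$ is an honest ordinal, not the symbol $\Omega$), Theorem \ref{thrm:tdhdbase} then applies with index $\tau-1$ in place of its $\alpha$, giving $\tDHD(S_\alpha)\leq\omega_{(\tau-1)+1}=\omega_\tau$.

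Combining the two bounds yields $\alpha\leq\tDHD(S_\alpha)\leq\omega_\tau$, which is exactly the claim with $X_\alpha=S_\alpha$. The main obstacle I anticipate is bookkeeping around the $\aleph$-index shift, namely reconciling Lemma \ref{cardD}'s bound $\aleph_{\tau-1}$ (where $\tau$ is least with $\alpha<\omega_\tau$) with the hypothesis of Theorem \ref{thrm:tdhdbase}, which is stated for a base of cardinality $\leq\aleph_\alpha$ and concludes $\tDHD\leq\omega_{\alpha+1}$: one must apply that theorem with its parameter equal to $\tau-1$ so that the conclusion reads $\omega_\tau$, and handle the subtlety that $\tau-1$ may not be a well-defined ordinal when $\tau$ is a limit ordinal. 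In the limit-$\tau$ case I would instead note that $\alpha<\omega_\tau=\sup_{\sigma<\tau}\omega_\sigma$ forces $\alpha<\omega_{\sigma_0}$ for some $\sigma_0<\tau$, contradicting minimality of $\tau$ unless $\tau$ is a successor; hence $\tau$ is always a successor ordinal and $\tau-1$ is legitimate. This edge-case verification, rather than any deep argument, is where care is required.
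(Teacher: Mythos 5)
Your proposal follows exactly the paper's route: the paper's entire proof is the sentence preceding the theorem, which cites precisely the three ingredients you use, with $X_\alpha=S_\alpha$ as the witness, the lower bound coming from $\tDHD(S_\alpha)\geq D(S_\alpha)=\alpha$ (Definition \ref{def:tDHD} plus Theorem \ref{thrm:Salpha}), and the upper bound coming from feeding Lemma \ref{cardD} into Theorem \ref{thrm:tdhdbase}. You even make explicit a step the paper leaves silent, namely that a dense set of cardinality $\leq\aleph_{\tau-1}$ yields a base of that cardinality because weight equals density character in metrizable spaces; and you correctly note the hypothesis $D(S_\alpha)=\alpha<\Omega$ needed to invoke Theorem \ref{thrm:tdhdbase}.

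The one defect is your edge-case claim that ``$\tau$ is always a successor ordinal.'' Your argument rules out nonzero limit $\tau$ correctly, but it says nothing about $\tau=0$, which occurs exactly when $\alpha$ is finite: there the identity $\omega_\tau=\sup_{\sigma<\tau}\omega_\sigma$ is vacuous, $\tau-1$ does not exist, and Theorem \ref{thrm:tdhdbase} cannot possibly deliver the required bound $\tDHD(X_\alpha)\leq\omega_0$, since its conclusion is always of the form $\omega_{\beta+1}\geq\omega_1$. The finite case needs a separate (one-line) patch: for $\alpha<\omega_0$ the space $S_\alpha=I^\alpha$ has finite Hausdorff dimension, so Theorem \ref{BDTHM1} gives $D(I^\alpha)\leq\tDHD(I^\alpha)\leq\lfloor\HD(I^\alpha)\rfloor=\alpha<\omega_0$. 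To be fair, the paper shares this gap (Lemma \ref{cardD} is stated only for $\tau\geq 1$, and the paper's one-sentence proof never addresses finite $\alpha$), so your write-up matches the published argument modulo this easily repaired point.
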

The previous theorem can be stated in a weaker manner to see that there exist a metrizable space with $D$-variant transfinite Hausdorff dimension between any two inital ordinals.
\begin{theorem}
    For any two initial ordinals, $\alpha,\beta\in\card\subset\ord$ with $\alpha<\beta$, there exists a metrizable space, $X_{\alpha,\beta}$ such that 
    \[
        \alpha\leq \tDHD(X_{\alpha,\beta}) \leq \beta
    \]
\end{theorem}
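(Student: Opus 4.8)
The plan is to obtain this statement as a short corollary of the preceding theorem, applying that theorem to the ordinal $\alpha$ itself and then using the hypothesis that $\beta$ is an \emph{initial} ordinal to absorb the resulting upper bound. Recall that the infinite initial ordinals are exactly the $\om_\sigma$, $\sigma\in\ord$, while the finite ones are the natural numbers; these two families behave differently under the bound $\om_\tau$ appearing in the preceding theorem, so I would split the argument accordingly.

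First I would dispose of the case where $\alpha$ is finite. Here I would take $X_{\alpha,\beta}=S_\alpha=I^\alpha$, which is finite dimensional with $\HD(I^\alpha)=\alpha<\infty$ and $D(I^\alpha)=\Ind(I^\alpha)=\alpha$. Since $\HD(I^\alpha)$ is finite, the earlier theorem bounding $\tDHD$ on spaces of finite Hausdorff dimension pins $\tDHD(I^\alpha)$ between $D(I^\alpha)=\alpha$ and $\lfloor\HD(I^\alpha)\rfloor=\alpha$, so $\tDHD(I^\alpha)=\alpha$. As $\alpha<\beta$ this gives $\alpha\le\tDHD(X_{\alpha,\beta})=\alpha\le\beta$, as required.

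For infinite $\alpha$ I would write $\alpha=\om_\sigma$. Since $\alpha<\Om$, the preceding theorem supplies a metric space $X_\alpha$ with $\alpha\le\tDHD(X_\alpha)\le\om_\tau$, where $\tau$ is least with $\alpha<\om_\tau$. The key step is to identify $\om_\tau$ with the successor cardinal of $\alpha$: because $\rho\mapsto\om_\rho$ is strictly increasing, the least $\tau$ with $\om_\sigma<\om_\tau$ is $\tau=\sigma+1$, so $\om_\tau=\om_{\sigma+1}$. Now $\beta$ is an initial ordinal with $\beta>\alpha=\om_\sigma$, and $\om_{\sigma+1}$ is by definition the least initial ordinal strictly exceeding $\om_\sigma$, whence $\beta\ge\om_{\sigma+1}=\om_\tau$. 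Chaining the inequalities yields $\alpha\le\tDHD(X_\alpha)\le\om_\tau\le\beta$, and setting $X_{\alpha,\beta}=X_\alpha$ finishes this case.

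I do not expect a serious obstacle, as the result is essentially a repackaging of the preceding theorem; the only points requiring care are bookkeeping ones. The main subtlety is recognizing that the preceding theorem's bound $\om_\tau$ is useful precisely because, for initial $\alpha=\om_\sigma$, it equals the successor cardinal $\om_{\sigma+1}$, so that the defining minimality property of initial ordinals forces $\om_\tau\le\beta$. The finite case genuinely must be handled on its own, since there the preceding theorem yields only the crude bound $\tDHD\le\om_0=\om$, which can exceed a finite $\beta$; the cube computation above is what supplies the sharp value $\tDHD(I^\alpha)=\alpha$ needed to cover that range.
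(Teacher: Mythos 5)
Your proof is correct and takes essentially the same route the paper intends, since the paper presents this theorem merely as a weaker restatement of the preceding one: apply that theorem to $\alpha$, and for infinite $\alpha=\om_\sigma$ note that the bound $\om_\tau=\om_{\sigma+1}$ is the least initial ordinal exceeding $\alpha$, hence $\om_\tau\leq\beta$. Your separate treatment of finite $\alpha$ (computing $\tDHD(I^\alpha)=\alpha$ by squeezing between $D(I^\alpha)$ and $\lfloor\HD(I^\alpha)\rfloor$) is a point of care the paper glosses over, since for finite $\alpha<\beta<\om_0$ the preceding theorem's bound $\om_0$ would indeed not suffice.
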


One easily checks that Lemmas \ref{metric3} and \ref{lemma:calpha} can be extended to ordinals greater than or equal to $\omega_1$ for a collection of metrizable spaces (as opposed to compact metrizable). Also, it is clear that the metric described in Definition \ref{def:salpha} for $\alpha \geq \omega_1$ is balanced. Then by these extensions and arguments given in Definition \ref{calpha} and Theorem \ref{thrm:Salpha} we have the following result. Namely one may take $X_{\alpha} = C_{\alpha}$ in the following. 
\begin{theorem}
For each $\alpha \in \ord$, there exists a metrizable space, $X_\alpha$, such that $D(X_\alpha)=0$ and $\alpha \leq \tDHD(X_\alpha) \leq \om_{\tau}$ where $\tau\in\ord$ is least such that $\alpha<\om_{\tau}$.
\end{theorem}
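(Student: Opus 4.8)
The plan is to take $X_\alpha = C_\alpha$, the Smirnov Cantor set of Definition \ref{calpha} with its construction now carried out for every ordinal $\alpha$, and to establish the three assertions $D(C_\alpha)=0$, $\alpha \le \tDHD(C_\alpha)$, and $\tDHD(C_\alpha) \le \om_\tau$ separately. The finite case $\alpha < \om_0$ is elementary: there $C_\alpha$ has finite Hausdorff dimension, so the preceding theorem, which bounds $\tDHD$ above by $\lfloor\HD(X)\rfloor$ on spaces of finite Hausdorff dimension, forces $\tDHD(C_\alpha)$ to be finite, hence $\tDHD(C_\alpha) < \om_0 = \om_\tau$. Thus I would concentrate on $\alpha \ge \om_0$, where $\tau \ge 1$ and the bookkeeping with $\aleph_{\tau-1}$ is meaningful.

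First I would prove $D(C_\alpha) = 0$ for every $\alpha$ by transfinite induction, extending the argument already recorded for $\alpha < \om_1$. The base case $C_0 = \{0\}$ is immediate. In the successor case $C_{\beta+1} = C_\beta \times C$, a product of two metrizable spaces each admitting a clopen base again admits a clopen base, so $\Ind(C_{\beta+1}) = 0$ and hence $D(C_{\beta+1}) = 0$. In the limit case $C_\alpha = \{p\} \cup \bigoplus_{\beta<\alpha} C_\beta$, a direct estimate with the metric of Definition \ref{def:salpha} shows that each $C_\beta$ is clopen in $C_\alpha \setminus \{p\}$, so the family $\{C_\beta\}_{\beta<\alpha}$ is locally finite there; Theorem \ref{thrm:dlocalfin} then gives $D(C_\alpha \setminus \{p\}) = 0$, and the excision bound of Theorem \ref{thrm:d4} applied with the closed set $F = \{p\}$ yields $D(C_\alpha) \le \lambda(0) + \max\{n(0), D(\{p\})\} = 0$.

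Next, for the lower bound I would use the extensions of Lemma \ref{metric3} and Lemma \ref{lemma:calpha} to arbitrary metrizable spaces indicated in the text, which produce for each $\alpha$ a Lipschitz surjection $\phi_\alpha \colon C_\alpha \to S_\alpha$ with uniformly bounded Lipschitz constant. Since $\phi_\alpha \in \mathcal{L}(C_\alpha)$, Definition \ref{def:tDHD} together with Theorem \ref{thrm:Salpha} gives $\tDHD(C_\alpha) \ge D(\dIm(\phi_\alpha)) = D(S_\alpha) = \alpha$. For the upper bound I would bound the weight of $C_\alpha$: as $C_\alpha \subseteq S_\alpha$ and the dense set $D_\alpha$ satisfies $\scard(D_\alpha) \le \aleph_{\tau-1}$ whenever $\alpha < \om_\tau$ by Lemma \ref{cardD}, the density, hence the weight, of the metric space $S_\alpha$ is at most $\aleph_{\tau-1}$, and so is the weight of its subspace $C_\alpha$. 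Because $D(C_\alpha) = 0 < \Om$, Theorem \ref{thrm:tdhdbase} then gives $\tDHD(C_\alpha) \le \om_{(\tau-1)+1} = \om_\tau$, completing all three parts.

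I expect the main work to lie in the extensions of Lemmas \ref{metric3} and \ref{lemma:calpha} beyond the compact metrizable setting: one must reconstruct the balanced Alexandrov metric and the uniformly Lipschitz surjections $\phi_\alpha$ when the index set $\{\beta < \alpha\}$ is uncountable, and confirm that $D(S_\alpha) = \alpha$ and the weight estimate of Lemma \ref{cardD} survive for $\alpha \ge \om_1$. By contrast, the transfinite induction for $D(C_\alpha) = 0$ and the final application of Theorem \ref{thrm:tdhdbase} should be routine once those extensions are in hand.
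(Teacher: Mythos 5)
Your proposal takes essentially the same route as the paper: $X_\alpha = C_\alpha$, with the lower bound $\alpha = D(S_\alpha) \leq \tDHD(C_\alpha)$ obtained by extending Lemmas \ref{metric3} and \ref{lemma:calpha} past $\om_1$ (using that the metric of Definition \ref{def:salpha} remains balanced there) together with Theorem \ref{thrm:Salpha}; the upper bound obtained from Lemma \ref{cardD} and Theorem \ref{thrm:tdhdbase} via the weight of $S_\alpha \supseteq C_\alpha$; and $D(C_\alpha)=0$ proved by transfinite induction through Theorems \ref{thrm:dlocalfin} and \ref{thrm:d4}. Your limit case (each $C_\beta$ clopen and the family locally finite away from the compactification point, then excision of that point via Theorem \ref{thrm:d4}) is exactly the paper's intended argument, and your observation that finite $\alpha$ needs the Hausdorff-dimension bound of Theorem \ref{BDTHM1} rather than the weight bound (which only yields $\om_1$) is a correct refinement of what the paper states.

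The one step whose justification fails as written is the successor case of the induction $D(C_\alpha)=0$. A clopen base for $C_{\beta+1}=C_\beta\times C$ gives $\mind(C_{\beta+1})=0$, i.e.\ small inductive dimension zero; but Definition \ref{def:d} is built on $\Ind$, and the implication ``clopen base $\Rightarrow \Ind=0$'' is valid only in the separable setting, where $\mind$ and $\mInd$ coincide. For $\beta\geq\om_1$ the space $C_\beta$ is no longer separable --- and this is precisely the new regime this theorem is about --- and for non-separable metrizable spaces the implication is false: Roy's space is a completely metrizable space with $\mind=0$ but $\mInd=\dim=1$. The conclusion $\Ind(C_\beta\times C)=0$ is nevertheless true, and the repair is standard: by the Kat\v{e}tov--Morita theorem $\Ind=\dim$ for metrizable spaces, and the product theorem $\dim(X\times Y)\leq \dim X+\dim Y$, valid for arbitrary nonempty metrizable factors (see \cite{engelkingDimensionTheory1978}), gives $\dim(C_\beta\times C)\leq 0+0=0$, hence $\Ind(C_{\beta+1})=D(C_{\beta+1})=0$. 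With that substitution your argument is complete and coincides with the paper's.
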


\section*{Acknowledgments}
The authors thank Mariusz Urba\'nski for his feedback and guidance in the development of this project.

\bibliographystyle{plain}
\bibliography{DD}
 
\end{document}